\newtheorem{defn}{Definition}[section]
\newtheorem{lem}[defn]{Lemma}
\newtheorem{theo}[defn]{Theorem}
\newtheorem{cor}[defn]{Corollary}
\newtheorem{prop}[defn]{Proposition}
\newtheorem{exam}[defn]{Example}
\newtheorem{con}[defn]{Conjecture}
\newtheorem{prob}[defn]{Problem}
\newtheorem{rem}[defn]{Remark}
\newcommand{\Romannum}[1]{\uppercase\expandafter{\romannumeral #1}}
\numberwithin{equation}{section}
\newcommand\keywordsname{Key words}
\newcommand\AMSname{AMS subject classifications}
\newenvironment{@abssec}[1]{%
     \if@twocolumn
       \section*{#1}%
     \else
       \vspace{.05in}\footnotesize
       \parindent .2in
         {\upshape\bfseries #1. }\ignorespaces
     \fi}
     {\if@twocolumn\else\par\vspace{.1in}\fi}
\begin{document}

\vskip6cm
\title{Spectral radius and signless Laplacian spectral radius of strongly connected digraphs
\footnote{Research supported 
the Zhujiang Technology New Star Foundation of
Guangzhou (No. 2011J2200090) and Program on International Cooperation and Innovation, 
Department of Education, Guangdong Province (No. 2012gjhz0007).}}
\author{Wenxi Hong, Lihua You\footnote{{\it{Corresponding author:\;}}ylhua@scnu.edu.cn.}}
\vskip.2cm
\date{{\small
School of Mathematical Sciences, South China Normal University,\\
Guangzhou, 510631, P.R. China\\
}} \maketitle

\begin{abstract}

 \vskip.3cm
Let $D$ be a strongly connected digraph and $A(D)$ be the adjacency matrix of $D$. Let $diag(D)$ be the diagonal matrix with outdegrees of the vertices of $D$ and $Q(D)=diag(D)+A(D)$ be the signless Laplacian matrix of $D$. The spectral radius of $Q(D)$ is called the signless Laplacian spectral radius of $D$, denoted by $q(D)$. In this paper, we  give sharp bound on $q(D)$  with outdegree sequence and compare the bound with some known bounds, establish some sharp upper or lower bound on $q(D)$ with some given parameter such as clique number, girth or vertex connectivity, and characterize the corresponding extremal digraph or proposed open problem.  In addition, we also determine the unique digraph which achieves the minimum (or maximum), the second minimum (or maximum), the third minimum, the fourth minimum spectral radius and signless Laplacian spectral radius among all strongly connected digraphs, and answer the open problem proposed by Lin-Shu [H.Q. Lin, J.L. Shu, A note on the spectral characterization of strongly connected bicyclic digraphs, Linear Algebra Appl. 436 (2012) 2524--2530].
\vskip.2cm \noindent{\it{AMS classification:}} 05C20; 05C50; 15A18
 \vskip.2cm \noindent{\it{Keywords:}} Digraph; Signless Laplacian; Spectral radius; Clique number; Girth; Vertex connectivity.
\end{abstract}

\section{ Introduction}
\hskip.6cm
Let $D=(V(D), E(D))$ be a digraph, where $V(D)=\{1, 2, \ldots, n \}$ and $E(D)$ are the  vertex set and arc set of $D$, respectively.
A digraph $D$ is simple if it has no loops and multiple arcs.
A digraph  $D$ is strongly connected if for every pair of vertices $i, j\in V(D)$, there are a directed path from $i$ to $j$ and a directed path from $j$ to $i$.
In this paper, we consider finite, simple strongly connected digraphs, simply, strongly connected digraphs. We follow \cite{2001, 1979, 1976} for terminology and notations.

Let $D$ be a digraph. If two vertices are connected by an arc, then they are called adjacent.
For $e=(i, j)\in E(D)$, $i$ is the initial vertex of $e$, $j$ is the terminal vertex of $e$ and vertex $i$ is the tail of vertex $j$.
Let $N^-_D(i)=\{j\in V(D)|(j, i)\in E(D)\}$ and $N^+_D(i)=\{j\in V(D)|(i, j)\in E(D)\}$ denote the in-neighbors and out-neighbors of $i$, respectively.
Let $d^-_i=|N^-_D(i)|$ denote the indegree of the vertex $i$ and $d^+_i=|N^+_D(i)|$ denote the outdegree of the vertex $i$ in $D$.
If $d^+_1=d^+_2=\cdots=d^+_n$, then $D$ is a regular digraph. Let $t^+_i=\sum\limits_{j\in N^+_D(i)} d^+_j$  be the 2--outdegree of the vertex $i$,  $m^+_i=\frac{t^+_i}{d^+_i}$ be the average 2--outdegree of the vertex $i$.

 Let $\overrightarrow{P_n}$ and $\overrightarrow{C_n}$ denote the directed path and the directed cycle on $n$ vertices, respectively. Let $\overset{\longleftrightarrow}{K_n}$ denote the complete digraph on $n$ vertices in which two arbitrary vertices $i, j\in V(\overset{\longleftrightarrow}{K_n})$, there are arcs $(i, j), (j, i)\in E(\overset{\longleftrightarrow}{K_n})$.

 Let $F$ be a subdigraph of $D$. If $D[V(F)]$ is a complete subdigraph of $D$, then $F$ is called a clique of $D$.
 The clique number of a digraph $D$, denoted by $\omega(D)$, is the maximum value of the numbers of the vertices of the  cliques in $D$.
 The girth of $D$ is the length of the shortest directed cycle of $D$.
 The vertex connectivity of $D$, denoted by $\kappa(D)$, is the minimum number of vertices whose removal destroys the strong connectivity of $D$.

Let $M$ be an $n\times n$ nonnegative matrix, $\lambda_1, \lambda_2, \ldots, \lambda_n$ be the eigenvalues of $M$.
 It is obvious that the eigenvalues can be complex numbers since $M$ is not symmetric in general.
 We usually assume that $|\lambda_1|\geq |\lambda_2|\geq\ldots \geq |\lambda_n|$.
 The spectral radius of $M$ is defined as $\rho(M)=|\lambda_1|$, i.e., it is the largest modulus 
 of the eigenvalues of $M$.   Since $M$ is a nonnegative matrix, it follows from the Perron Frobenius Theorem that $\rho(M)$ is a real number.

 For a digraph $D$, let $A(D)=(a_{ij})$ denote the adjacency matrix of $D$, where $a_{ij}$ is equal to the number of arc $(i, j)$.
 The spectral radius of $A(D)$, denoted by $\rho(D)$, is called the spectral radius of $D$.

 Let $diag(D)=diag(d^+_1, d^+_2, \ldots, d^+_n)$ be the diagonal matrix with outdegree of the vertices of  $D$ and
 $Q(D)=diag(D)+A(D)=(q_{ij})$ be the signless Laplacian matrix of $D$.
 The spectral radius of $Q(D)$, $\rho(Q(D))$, denoted by $q(D)$, is called the signless Laplacian spectral radius of $D$.

Since $D$ is a strongly connected digraph, then $A(D)$, $Q(D)$ are nonnegative irreducible matrices. It follows from the Perron Frobenius Theorem that
  $\rho(D)$ and $\rho(Q(D))=q(D)$ are positive real numbers and there is a positive unit eigenvector corresponding to $\rho(D)$ and  $q(D)$, respectively.

 The  spectral radius and the signless Laplacian spectral radius  of undirected graph  are well treated in the literature,
 see \cite{2009PIM, 2010LAA, 2013, 2010LAA1,  2013LAA, 2013LAA2, 2014LAA} and so on,
 but there is not much known about digraphs.
Recently, R.A. Brualdi  wrote a stimulating survey on the spectra of digraphs\cite{2010}. Furthermore,
some upper or lower bounds on  the  spectral radius or the signless Laplacian spectral radius  of digraphs were obtained
by the outdegrees and the average 2-outdegrees \cite{2013ARS, 2013}.
Some  extremal digraphs which attain the maximum or minimum spectral radius 
and the distance  spectral radius  of digraphs with given parameters, such as given connectivity, given arc connectivity,
given dichromatic number, given clique number, given girth  and so on, were characterized,
see e.g. \cite{2013DM, 2011LAA,  2013DAM, 2012DM, 2012DM2}.

 In \cite{2013ARS}, S. Burcu Bozkurt and Durmus Bozkurt gave some sharp upper and lower bounds for the signless Laplacian spectral radius as follows:

\begin{equation}\label{eq11}
\min\{d^+_i+d^+_j:(i, j)\in E(D)\}\leq q(D) \leq\max\{d^+_i+d^+_j:(i, j)\in E(D)\},
\end{equation}
\begin{equation}\label{eq12}
\min\{d^+_i+m^+_i:i\in V(D)\}\leq q(D) \leq\max\{d^+_i+m^+_i:i\in V(D)\},
\end{equation}
\begin{equation}\label{eq13}
q(D)\leq\max\{\frac{d^+_i+d^+_j+\sqrt{(d^+_i-d^+_j)^2+4m^+_im^+_j}}{2}:(i, j)\in E(D)\},
\end{equation}
\begin{equation}\label{eq14}
q(D)\leq\max\{d^+_i+\sqrt{t_i^+}:i\in V(D)\}.
\end{equation}

The paper is organized as follows: we give a new sharp upper bound of the signless Laplacian spectral radius with outdegree sequence among strongly connected digraphs and compare the bound with some known bounds in  Section 2;
 In Section 3, 
 we give some graph transformations on digraphs which are useful to the proofs of our main results;
 In Section 4, we characterize the digraph which minimizes the signless Laplacian spectral radius with given clique number;
 In Section 5, we characterize the digraph which minimizes the signless Laplacian spectral radius with given girth
 and determine the unique digraph with the second, the third and the fourth  minimum signless Laplacian spectral radius among all strongly connected digraphs;
In Section 6, we study the  maximum signless Laplacian spectral radius with given vertex connectivity among all strongly connected digraphs,
describe the extremal digraph set and the value of the signless Laplacian spectral radius of such digraphs,
determine the unique digraph with the second maximum signless Laplacian spectral radius among all strongly connected digraphs,
and  conjecture the digraph which maximizes the signless Laplacian spectral radius with given vertex connectivity.
In Section 7, we determine the unique digraph which achieves  the second minimum (or maximum), the third minimum, the fourth minimum spectral radius among all strongly connected digraphs and answer the open problem proposed by Lin-Shu [H.Q. Lin, J.L. Shu, A note on the spectral characterization of strongly connected bicyclic digraphs, Linear Algebra Appl. 436 (2012) 2524--2530].

\section{Bounds on the signless spectral radius of digraphs}

\hskip.6cm
In this section, we first give the maximal and minimal signless spectral radius of digraphs. Then we give a new sharp upper bound of the signless Laplacian spectral radius among all simple digraphs and compare them with the upper bounds given in \cite{2013ARS} as inequalities (1.1)--(1.4).
 The technique used in the result is motivated by  \cite{2013, 2013LAA2} et al.

\begin{lem}\label{lem21}{\rm(\cite{1979})}
If $A$ is an $n\times n$ nonnegative matrix with the spectral radius $\rho(A)$ and row sums $r_1, r_2, \ldots, r_n$, then
$\min\limits_{1\leq i\leq n}r_i\leq \rho(A)\leq \max\limits_{1\leq i\leq n}r_i$. Moreover, if $A$ is irreducible, then one of the equalities holds if and only if the row sums of $A$ are all equal.
\end{lem}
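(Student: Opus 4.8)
The plan is to lean on the Perron--Frobenius theorem, already invoked in the excerpt, to produce an eigenvector attaining $\rho(A)$ and then extract both inequalities by inspecting the coordinate at which that eigenvector is extremal. Write $R=\max_{1\le i\le n}r_i$ and $\mu=\min_{1\le i\le n}r_i$.

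For the upper bound I would take a nonnegative eigenvector $x=(x_1,\dots,x_n)^{T}\neq 0$ with $Ax=\rho(A)x$, and let $k$ be an index with $x_k=\max_i x_i>0$. The $k$-th coordinate of the eigenvalue equation reads $\rho(A)x_k=\sum_{j}a_{kj}x_j\le x_k\sum_j a_{kj}=x_k r_k\le x_k R$, and dividing by $x_k>0$ gives $\rho(A)\le R$. For the lower bound the symmetric choice of a minimal coordinate works whenever the eigenvector is strictly positive; since this is exactly the irreducible case where the equality discussion lives, I would run the minimal-coordinate argument there to get $\mu\le\rho(A)$. For a possibly reducible $A$ the eigenvector can vanish in some coordinates, so instead I would observe that $A\mathbf 1=(r_1,\dots,r_n)^{T}\ge\mu\,\mathbf 1$ entrywise, iterate using monotonicity of the nonnegative map $A$ to get $A^{k}\mathbf 1\ge\mu^{k}\mathbf 1$, and conclude $\rho(A)\ge\mu$ from Gelfand's formula $\rho(A)=\lim_k\|A^{k}\|^{1/k}$.

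For the equality statement assume $A$ is irreducible, so the Perron eigenvector satisfies $x>0$. Suppose the upper equality $\rho(A)=R$ holds. Re-examining the chain above at a maximal index $k$, both inequalities must be equalities, which forces $r_k=R$ and $\sum_j a_{kj}(x_k-x_j)=0$; since every summand is nonnegative, $a_{kj}>0$ implies $x_j=x_k$. Thus every out-neighbour (in the digraph associated with $A$) of a coordinate attaining $\max_i x_i$ also attains the maximum, so the set of maximizing coordinates is closed under following arcs. Irreducibility means this digraph is strongly connected, hence that set is all of $\{1,\dots,n\}$ and $x$ is a constant vector. Feeding $x=c\mathbf 1$ back into $Ax=\rho(A)x$ yields $r_i=\rho(A)$ for every $i$, i.e.\ all row sums are equal; the lower-equality case is identical with $\min$ in place of $\max$. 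Conversely, if all row sums equal a common value $r$ then $\mathbf 1$ is a positive eigenvector with eigenvalue $r$, and Perron--Frobenius identifies the eigenvalue having a positive eigenvector as $\rho(A)$, so $\mu=\rho(A)=R=r$ and both equalities hold at once.

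The only genuinely delicate point is the lower bound for a reducible matrix, where a vanishing eigenvector coordinate derails the minimal-coordinate trick; the monotonicity-plus-Gelfand detour circumvents this cleanly. Everything else is the standard extremal-coordinate chase, and the strong-connectivity propagation in the equality step is where irreducibility is essential.
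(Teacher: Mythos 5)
Your proof is correct. Note that the paper offers no proof of this statement at all: Lemma \ref{lem21} is quoted directly from Berman--Plemmons \cite{1979} as a known classical fact, so there is nothing internal to compare against. What you have written is essentially the standard textbook argument for that result: the extremal-coordinate chase on a Perron eigenvector for the upper bound and for the equality analysis, with irreducibility entering exactly where you say it does (positivity of the eigenvector, and strong connectivity forcing the set of maximizing coordinates to be everything). Your handling of the one genuinely delicate point --- the lower bound when $A$ is reducible and the eigenvector may have zero coordinates --- via $A^{k}\mathbf{1}\geq\mu^{k}\mathbf{1}$ and Gelfand's formula is sound; an equally common alternative is to note directly that $\rho(A)\geq\mu$ follows by perturbing $A$ to an irreducible matrix $A+\varepsilon J$ and letting $\varepsilon\to 0$, but your route avoids any continuity-of-spectrum argument. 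No gaps.
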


By the definition of $Q(D)$, the $i$-th row sum of $Q(D)$ is $2d_i^+$. Follows from Lemma \ref{lem21}, we can get $q(\overrightarrow{C_n})=2$ and $q(\overset{\longleftrightarrow}{K_n})=2n-2$ immediately.

\begin{defn}\label{defn22}{\rm(\cite{1979}, Chapter 2)}
Let $A=(a_{ij}), B=(b_{ij})$ be $n\times n$ matrices. If $a_{ij}\leq b_{ij}$ for all $i$ and $j$, then $A\leq B$.
If $A\leq B$ and $A\neq B$, then $A< B$. If $a_{ij} < b_{ij}$ for all $i$ and $j$, then $A \ll B$.
\end{defn}

\begin{lem}\label{lem23}{\rm(\cite{1979}, Chapter 2)}
Let $A, B$ be $n\times n$ matrices with the spectral radius $\rho(A)$ and $\rho(B)$. If $0\leq A\leq B$, then $\rho(A) \leq \rho(B)$.
Furthermore, if $0\leq A<B$ and $B$ is irreducible, then $\rho(A) < \rho(B)$.
\end{lem}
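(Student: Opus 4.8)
The plan is to treat the two assertions separately, since the non-strict monotonicity $\rho(A)\le\rho(B)$ holds for \emph{any} nonnegative pair $0\le A\le B$, whereas the strict inequality genuinely uses the irreducibility of $B$. For the first assertion I would invoke Gelfand's formula $\rho(M)=\lim_{k\to\infty}\|M^k\|^{1/k}$, which is valid for every submultiplicative matrix norm. Choosing the entrywise sum norm $\|M\|=\sum_{i,j}|m_{ij}|$, one checks quickly that it is submultiplicative and, crucially, monotone on nonnegative matrices (if $0\le M\le N$ then $\|M\|\le\|N\|$). An elementary induction shows the entrywise order is preserved under products, so $0\le A\le B$ yields $0\le A^k\le B^k$ for all $k$, hence $\|A^k\|\le\|B^k\|$. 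Taking $k$-th roots and letting $k\to\infty$ gives $\rho(A)\le\rho(B)$; no irreducibility is needed here.

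For the strict assertion I would bring in the Perron--Frobenius theorem. Since $B$ is irreducible, it has a left eigenvector $y$ with every entry positive satisfying $y^{\top}B=\rho(B)y^{\top}$. The matrix $A$ need not be irreducible, but Perron--Frobenius still supplies a nonnegative eigenvector $z\ge 0$, $z\ne 0$, with $Az=\rho(A)z$. Pairing these gives $\rho(A)\,y^{\top}z=y^{\top}Az\le y^{\top}Bz=\rho(B)\,y^{\top}z$, and since $y^{\top}z>0$ this re-proves $\rho(A)\le\rho(B)$ in the irreducible case. To obtain strictness I would argue by contradiction: if $\rho(A)=\rho(B)$, the middle inequality is forced to be an equality, i.e. $y^{\top}(B-A)z=0$; because $y$ is strictly positive, $B-A\ge 0$ and $z\ge 0$, every summand must vanish, giving $(B-A)_{ij}\,z_j=0$ for all $i,j$.

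The heart of the matter, and the step I expect to be the main obstacle, is converting this vanishing condition into a structural contradiction with irreducibility, precisely because the eigenvector $z$ of the possibly reducible matrix $A$ may have zero entries, so $y^{\top}(B-A)z=0$ does not immediately contradict $A<B$. Let $S=\{j:z_j>0\}$ be the support of $z$. Since $A<B$, some column $q$ of $B-A$ is nonzero, and $(B-A)_{iq}z_q=0$ then forces $z_q=0$, so $\bar S\neq\varnothing$; also $S\neq\varnothing$ as $z\ne 0$. For $j\in S$ the relation $(B-A)_{ij}z_j=0$ gives $(B-A)_{ij}=0$ for all $i$, while expanding $(Az)_i=\rho(A)z_i=0$ for $i\in\bar S$ forces $A_{ij}=0$, and hence $B_{ij}=A_{ij}+(B-A)_{ij}=0$, whenever $i\in\bar S$ and $j\in S$. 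Thus the digraph associated with $B$ has no arc from $\bar S$ into the nonempty set $S$, so it is not strongly connected and $B$ is reducible, contradicting the hypothesis. Therefore $\rho(A)<\rho(B)$, completing the proof.
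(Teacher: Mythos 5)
Your proof is correct and complete. Bear in mind that the paper itself gives no proof of this lemma: it is quoted from Berman--Plemmons \cite{1979}, Chapter 2, as a standard fact, so the only meaningful comparison is with the classical textbook argument. Your first part (Gelfand's formula together with $0\le A^k\le B^k$ and monotonicity of the entrywise norm) is the standard route and is fine. For the strict inequality, the classical proof runs differently: from $Az=\rho(A)z$ with $z\ge 0$, $z\ne 0$, and the assumption $\rho(A)=\rho(B)$, one gets $Bz-\rho(B)z\ge Az-\rho(A)z=0$; pairing with the positive left eigenvector $y$ of $B$ gives $y^{\top}(Bz-\rho(B)z)=0$, which forces $Bz=\rho(B)z$; then irreducibility of $B$ upgrades the nonnegative eigenvector $z$ to a strictly positive one, and $(B-A)z=0$ with $z>0$ yields $A=B$, a contradiction. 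You instead keep the possibly vanishing coordinates of $z$ and show directly that the support $S=\{j:z_j>0\}$ would decouple: $B_{ij}=0$ for all $i\notin S$, $j\in S$, with both $S$ and its complement nonempty, so $B$ is reducible. Both arguments rest on the same two Perron--Frobenius inputs (a positive left eigenvector of the irreducible $B$, and a nonnegative eigenvector of the general nonnegative $A$); the classical route is slightly shorter because irreducibility is used to make $z$ positive, while yours confronts the zero pattern of $z$ head-on and produces an explicit reducible block structure, which is exactly the combinatorial meaning of reducibility. Either way the contradiction lands on the irreducibility of $B$, and your handling of the step you flagged as the main obstacle is sound.
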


\begin{lem}\label{lem24}{\rm(\cite{1979}, Chapter 2; \cite{1988}, Chapter 1)}
Let $m< n$, $A, B$ be $n\times n$, $m\times m$ nonnegative matrices with the spectral radius $\rho(A)$ and $\rho(B)$, respectively.
If $B$ is a principal submatrix of $A$, then $\rho(B) \leq \rho(A)$.
Furthermore, if $A$ is irreducible, then $\rho(B) < \rho(A)$.
\end{lem}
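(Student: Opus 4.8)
The plan is to reduce the statement to Lemma \ref{lem23} by embedding $B$ back into an $n\times n$ matrix of the same size as $A$. Since $B$ is an $m\times m$ principal submatrix of $A$, there is an index set $S\subseteq\{1,2,\ldots,n\}$ with $|S|=m$ such that $B$ is obtained from $A$ by deleting all rows and columns whose indices lie outside $S$. Because the spectral radius is invariant under simultaneous permutation of rows and columns, I may assume after relabelling that $S=\{1,2,\ldots,m\}$, so that $A$ has a block form with $B$ sitting in its top-left $m\times m$ corner.

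First I would define the $n\times n$ matrix $\widetilde{B}$ that agrees with $A$ on the positions $S\times S$ and is zero elsewhere; with the above labelling $\widetilde{B}=\begin{pmatrix} B & 0\\ 0 & 0\end{pmatrix}$. Since $A$ is nonnegative, every entry of $\widetilde{B}$ is either an entry of $A$ or $0$, so $0\leq \widetilde{B}\leq A$ entrywise. The block-diagonal shape of $\widetilde{B}$ gives $\det(\lambda I_n-\widetilde{B})=\lambda^{\,n-m}\det(\lambda I_m-B)$, so the eigenvalues of $\widetilde{B}$ are exactly those of $B$ together with $n-m$ copies of $0$. As $B$ is nonnegative we have $\rho(B)\geq 0$, and therefore $\rho(\widetilde{B})=\rho(B)$.

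Now applying Lemma \ref{lem23} to $0\leq\widetilde{B}\leq A$ yields $\rho(B)=\rho(\widetilde{B})\leq\rho(A)$, which is the first assertion. For the strict inequality, note that $m<n$ forces $n\geq 2$ and leaves at least one index $k\notin S$. If $A$ is irreducible of order $n\geq 2$, then its associated digraph is strongly connected, so no row of $A$ can be identically zero; in particular row $k$ of $A$ has a positive entry while row $k$ of $\widetilde{B}$ is zero. Hence $\widetilde{B}\neq A$, and combined with $\widetilde{B}\leq A$ this gives $\widetilde{B}<A$ in the sense of Definition \ref{defn22}. The second part of Lemma \ref{lem23}, applied with the irreducible matrix $A$ in the role of the larger matrix, then gives $\rho(\widetilde{B})<\rho(A)$, i.e. $\rho(B)<\rho(A)$.

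The one point that needs care --- and the only real obstacle --- is the justification that passing from $B$ to its zero-padded extension $\widetilde{B}$ does not change the spectral radius, together with the observation that irreducibility rules out a zero row and thereby upgrades $\widetilde{B}\leq A$ to the strict relation $\widetilde{B}<A$ required by Lemma \ref{lem23}. Everything else is a direct application of the comparison lemma already available in the excerpt.
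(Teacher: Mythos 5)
Your proof is correct. Note first that the paper itself gives no proof of this lemma: it is quoted directly from the cited monographs (Berman--Plemmons, Chapter 2; Minc, Chapter 1), so there is no in-paper argument to compare against. What you have written is the standard derivation, and it meshes cleanly with the toolkit the paper does provide: the zero-padding $\widetilde{B}$ satisfies $0\leq\widetilde{B}\leq A$, the block structure gives $\rho(\widetilde{B})=\rho(B)$ (using $\rho(B)\geq 0$ for nonnegative $B$), and Lemma \ref{lem23} then yields the weak inequality; for the strict case, your observation that $m<n$ forces $n\geq 2$ and that an irreducible matrix of order at least $2$ has no zero row correctly upgrades $\widetilde{B}\leq A$ to $\widetilde{B}<A$ in the sense of Definition \ref{defn22}, after which the second half of Lemma \ref{lem23} applies with $A$ as the irreducible majorant. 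A marginally shorter route to the same contradiction: if $\widetilde{B}=A$, then $A$ itself would have the zero row possessed by $\widetilde{B}$ and hence be reducible, so $\widetilde{B}\neq A$ follows without invoking strong connectivity explicitly. Either way, the argument is complete and rigorous.
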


By Lemmas \ref{lem23}--\ref{lem24} and the definitions of $Q(D)$ and $q(D)$, we have the following  results in terms of digraphs.

\begin{cor}\label{cor25}
Let $D$ be a digraph and $H$ be a subdigraph of $D$. Then $q(H)\leq q(D)$. If $D$ is strongly connected, and $H$ is a proper  subdigraph of $D$, then $q(H)<q(D)$.
\end{cor}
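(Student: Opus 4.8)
The plan is to reduce the statement to the two monotonicity principles for the spectral radius of nonnegative matrices, namely Lemma \ref{lem23} (entrywise domination) and Lemma \ref{lem24} (principal submatrices). The only genuine care needed is that $Q(H)$ is \emph{not} in general a principal submatrix of $Q(D)$: the diagonal of a principal submatrix of $Q(D)$ records the outdegrees measured in $D$, whereas the diagonal of $Q(H)$ records the possibly smaller outdegrees measured in $H$. I would therefore introduce the intermediate matrix $M=Q(D)[V(H)]$, the principal submatrix of $Q(D)$ indexed by the rows and columns in $V(H)$, and compare $Q(H)$ to $M$ and then $M$ to $Q(D)$.

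First I would verify that $Q(H)\le M$ entrywise. Both are $|V(H)|\times|V(H)|$ matrices indexed by $V(H)$. Off the diagonal, the $(i,j)$ entry of $Q(H)$ counts the arcs $(i,j)$ present in $H$, which is at most the number present in $D$, i.e.\ the $(i,j)$ entry of $M$, since $E(H)\subseteq E(D)$. On the diagonal, the $i$-th entry of $Q(H)$ is $d^+_i$ computed in $H$, which is at most $d^+_i$ computed in $D$ because $H$ is a subdigraph of $D$; the latter is precisely the $i$-th diagonal entry of $M$. Hence $Q(H)\le M$ in the sense of Definition \ref{defn22}. Applying Lemma \ref{lem23} gives $q(H)=\rho(Q(H))\le\rho(M)$, and applying Lemma \ref{lem24} to the principal submatrix $M$ of $Q(D)$ gives $\rho(M)\le\rho(Q(D))=q(D)$. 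Chaining these yields $q(H)\le q(D)$.

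For the strict inequality, assume $D$ is strongly connected, so $Q(D)$ is irreducible, and suppose $H$ is a proper subdigraph. I would split into two cases according to why $H$ is proper. If $V(H)\subsetneq V(D)$, then $M$ is a proper principal submatrix of the irreducible matrix $Q(D)$, so the strict part of Lemma \ref{lem24} gives $\rho(M)<q(D)$, whence $q(H)\le\rho(M)<q(D)$. If instead $V(H)=V(D)$ but $H$ omits at least one arc, then $M=Q(D)$, while deleting an arc $(i,j)$ strictly decreases both the corresponding adjacency entry and the diagonal outdegree entry of the tail $i$, so $Q(H)\le Q(D)$ with $Q(H)\neq Q(D)$, i.e.\ $Q(H)<Q(D)$; the strict part of Lemma \ref{lem23}, using the irreducibility of $Q(D)$, then gives $q(H)=\rho(Q(H))<\rho(Q(D))=q(D)$.

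I expect the only real obstacle to be the bookkeeping in the comparison step: recognizing that the correct intermediate object is $M=Q(D)[V(H)]$ rather than $Q(H)$ itself, and checking that the diagonal (outdegree) and off-diagonal (adjacency) parts each separately obey the required entrywise inequality. Once that comparison is in place, the non-strict conclusion follows mechanically from Lemmas \ref{lem23} and \ref{lem24}, and the strictness is merely a matter of identifying which of the two lemmas delivers it in each of the two cases.
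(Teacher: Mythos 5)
Your proposal is correct and takes essentially the same route as the paper: the paper states this corollary as an immediate consequence of Lemmas \ref{lem23} and \ref{lem24} without writing out details, and your argument (comparing $Q(H)$ entrywise to the principal submatrix $Q(D)[V(H)]$, then splitting the strict case according to whether a vertex or only an arc is missing) is precisely the bookkeeping that one-line proof leaves to the reader.
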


From Lemma \ref{lem21} and Corollary \ref{cor25}, we easily get the following results.

\begin{cor}\label{cor26}
Let $D$ be a strongly connected digraph. Then $2\leq q(D)\leq 2(n-1)$,
$q(D)=2n-2$ if and only if $D\cong \overset{\longleftrightarrow}{K_n}$,
and $q(D)= 2$  if and only if $D\cong \overrightarrow{C_n}$.
\end{cor}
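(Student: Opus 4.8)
The plan is to reduce the whole statement to the row-sum bound of Lemma~\ref{lem21}. Since $D$ is strongly connected, $Q(D)$ is nonnegative and irreducible, and by the remark preceding Definition~\ref{defn22} its $i$-th row sum equals $2d_i^+$. Hence Lemma~\ref{lem21} gives $\min_i 2d_i^+ \le q(D) \le \max_i 2d_i^+$, with either equality forcing all row sums to coincide. It therefore remains only to locate the row sums in the interval $[2, 2(n-1)]$ and to translate the ``all row sums equal'' condition into a structural description of $D$.

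First I would pin down the range of the out-degrees. Strong connectivity forces every vertex to have at least one out-arc, so $d_i^+ \ge 1$; and since $D$ is simple on $n$ vertices, $d_i^+ \le n-1$. Thus $2 \le \min_i 2d_i^+$ and $\max_i 2d_i^+ \le 2(n-1)$, which combined with Lemma~\ref{lem21} yields $2 \le q(D) \le 2(n-1)$. The values at the two ends are already known, namely $q(\overrightarrow{C_n}) = 2$ and $q(\overset{\longleftrightarrow}{K_n}) = 2n-2$, so both bounds are attained.

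For the equality characterizations I would invoke the irreducibility clause of Lemma~\ref{lem21}. If $q(D) = 2$, then $2 \le \min_i 2d_i^+ \le q(D) = 2$ forces $\min_i 2d_i^+ = q(D)$, so all row sums equal $2$, i.e.\ $d_i^+ = 1$ for every $i$. Dually, if $q(D) = 2n-2$ then $q(D) = \max_i 2d_i^+ = 2(n-1)$ and all row sums equal $2n-2$, i.e.\ $d_i^+ = n-1$ for every $i$. (Alternatively, the upper equality follows at once from Corollary~\ref{cor25}, since $D$ is a subdigraph of $\overset{\longleftrightarrow}{K_n}$ and is a \emph{proper} subdigraph exactly when $D \not\cong \overset{\longleftrightarrow}{K_n}$.)

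The last step is to read off the digraph from the out-degree condition, and this is where the only real work lies. The case $d_i^+ = n-1$ for all $i$ is immediate: in a simple digraph each vertex then sends an arc to all $n-1$ others, so $D \cong \overset{\longleftrightarrow}{K_n}$. The case $d_i^+ = 1$ for all $i$ is subtler: here the unique out-arc defines a successor map $f \colon V(D)\to V(D)$, and because $D$ is strongly connected every vertex also has in-degree at least $1$; as the in-degrees sum to the number of arcs, which is $n$, each in-degree is exactly $1$, so $f$ is a permutation. A permutation whose digraph is strongly connected must be a single $n$-cycle, giving $D \cong \overrightarrow{C_n}$. The converse directions are covered by the two known values above, completing the proof. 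I expect the permutation-to-single-cycle argument to be the main obstacle, since the other equalities are forced directly by Lemma~\ref{lem21}.
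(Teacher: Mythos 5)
Your proposal is correct and follows exactly the route the paper intends: the paper states this corollary without proof as an easy consequence of Lemma~\ref{lem21} (row sums of $Q(D)$ equal $2d_i^+$, with equality in the bound forcing all row sums equal) and Corollary~\ref{cor25}, which are precisely the tools you use. Your explicit counting argument showing that all out-degrees equal to $1$ plus strong connectivity forces $D\cong\overrightarrow{C_n}$ is just the detail the paper leaves implicit.
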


\begin{theo}\label{theo27}
Let $D=(V(D),E(D))$ be a simple  digraph on $n\geq 2$ vertices with $V(D)=\{1,2,\ldots, n\}$, outdegree sequence $d^+_1, d^+_2, \ldots, d^+_n$, where $d^+_1\geq d^+_2\geq\cdots\geq d^+_n$.
Let $\phi_1=2d_1^+$ and for $2\leq l\leq n$,
\begin{equation}\label{eq21}
\phi_l=\frac{d^+_1+2d^+_l-1+\sqrt{(2d^+_l-d^+_1+1)^2+8\sum\limits_{i=1}^{l-1}(d^+_i-d^+_l)}}{2}.
\end{equation}
\noindent  and $\phi_s=\min\limits_{1\leq l\leq n}\{\phi_l\}$ for some $s\in \{1,2, \ldots, n\}$. Then $q(D)\leq \phi_s.$
Furthermore, if $D$ is a strongly connected digraph, then $q(D)=\phi_s$ if and only if $D$ is regular or
there exists an integer $t$ with $2\leq t\leq s$ such that $d^+_1=\cdots=d^+_{t-1}>d^+_t=\cdots=d^+_n$ and the indegrees $d^-_1=\cdots=d^-_{t-1}=n-1$.
\end{theo}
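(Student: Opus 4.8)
The plan is to prove the inequality $q(D)\le\phi_s$ by showing that in fact $q(D)\le\phi_l$ holds for \emph{every} $l\in\{1,\dots,n\}$, from which $q(D)\le\min_l\phi_l=\phi_s$ follows at once. The starting observation is that $\phi_l$ is the larger root of the upward parabola
\[
f_l(y)=y^2-(d_1^++2d_l^+-1)\,y+2d_l^+(d_1^+-1)-2\sum_{i=1}^{l-1}(d_i^+-d_l^+),
\]
and that this quadratic factors conveniently: $f_l(y)=(y-2d_l^+)\bigl(y-(d_1^+-1)\bigr)-2\sum_{i=1}^{l-1}(d_i^+-d_l^+)$. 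Since $f_l(2d_l^+)=-2\sum_{i=1}^{l-1}(d_i^+-d_l^+)\le 0$, the value $2d_l^+$ already lies below the larger root, so $2d_l^+\le\phi_l$. Hence if $q(D)\le 2d_l^+$ we are done immediately, and the only substantive case is $q(D)>2d_l^+$, where it suffices to prove the factored inequality $(q(D)-2d_l^+)\bigl(q(D)-d_1^++1\bigr)\le 2\sum_{i=1}^{l-1}(d_i^+-d_l^+)$, which forces $f_l(q(D))\le 0$ and therefore $q(D)\le\phi_l$.

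The engine for this estimate is the Perron eigenvector. By the Perron--Frobenius theorem there is a nonnegative eigenvector $x$ for $q(D)=\rho(Q(D))$ (positive when $D$ is strongly connected; the general simple case can be reached by the subdigraph monotonicity of Corollary \ref{cor25} together with a continuity/perturbation argument). Normalize so that $\max_i x_i=1$, attained at a vertex $u$, and rewrite the eigen-equation $q(D)x_u=d_u^+x_u+\sum_{j\in N_D^+(u)}x_j$ in the shifted form $(q(D)-d_l^+)x_u=(d_u^+-d_l^+)x_u+\sum_{j\in N_D^+(u)}x_j$. First I would split the out-neighbours of $u$ at the degree threshold $d_l^+$: the high-degree out-neighbours lie among the top $l-1$ vertices, so their combined excess contribution is controlled by $\sum_{i=1}^{l-1}(d_i^+-d_l^+)$, while each low-degree out-neighbour $j$ satisfies $(q(D)-d_j^+)x_j=\sum_{k\in N_D^+(j)}x_k\le d_j^+$, hence $x_j\le d_j^+/(q(D)-d_j^+)\le d_l^+/(q(D)-d_l^+)$ (using that $t\mapsto t/(q(D)-t)$ is increasing and $q(D)>2d_l^+>d_l^+$). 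Secondly I would invoke the degree-majorization bound $\sum_{j\in N_D^+(u)}d_j^+\le d_u^+d_l^+ +\sum_{i=1}^{l-1}(d_i^+-d_l^+)$, valid because the $d_u^+$ out-neighbours of $u$ are distinct vertices whose outdegrees are dominated by the $d_u^+$ largest entries of the sequence. Clearing the positive denominator $q(D)-d_l^+$ then collapses these estimates into a quadratic inequality in $q(D)$.

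The hard part is to land exactly on the sharp constants $d_1^+-1$ and $2d_l^+$ rather than on the cruder, strictly weaker constants $d_1^+$ and $d_l^+$ that a naive bounding (replacing $d_u^+$ by $d_1^+$ and every eigenvector entry by $1$) produces. Pinning down the ``$-1$'' requires using that $D$ is simple, so $u\notin N_D^+(u)$ and $u$ cannot be counted among its own out-neighbours in the majorization step; and sharpening $d_l^+$ to $2d_l^+$ requires keeping the eigenvector weights through a careful two-step estimate instead of discarding them. This bookkeeping—reconciling the threshold split, the majorization, and the loop-freeness so that the final quadratic is precisely $f_l(q(D))\le 0$—is where essentially all of the work lies.

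For the equality analysis I would reverse every inequality above. Regular digraphs give $q(D)=2d^+=\phi_s$ directly from Lemma \ref{lem21}, which is the degenerate case. In the non-regular case, tightness of the threshold and majorization steps forces each relevant out-neighbourhood to consist exactly of the top-outdegree vertices and forces $x$ to assume only two values, so those top vertices receive an arc from every other vertex, i.e.\ have indegree $n-1$; reading the degrees off the extremal configuration yields the two-block structure $d_1^+=\cdots=d_{t-1}^+>d_t^+=\cdots=d_n^+$ with $d_1^-=\cdots=d_{t-1}^-=n-1$. Conversely, for such a digraph the two-valued eigenvector reduces $Q(D)$ to a $2\times2$ quotient whose spectral radius is $\phi_t$; and because for this degree sequence one computes $\phi_1=\cdots=\phi_{t-1}=2d_1^+$ while $\phi_t=\phi_{t+1}=\cdots=\phi_n$ take the common smaller value, the minimum $\phi_s$ is attained at every index $\ge t$, giving $q(D)=\phi_t=\phi_s$ and the constraint $t\le s$. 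The main obstacle here is matching the equality case of the sharp-constant estimate to the precise indegree condition $d_i^-=n-1$; once the two-valued-eigenvector forcing is established, the degree and indegree structure follow.
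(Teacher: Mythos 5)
Your reduction is fine as far as it goes: $\phi_l$ is indeed the larger root of $f_l(y)=(y-2d_l^+)\bigl(y-d_1^++1\bigr)-2\sum_{i=1}^{l-1}(d_i^+-d_l^+)$, one has $2d_l^+\le\phi_l$, and it would suffice to prove $f_l(q(D))\le 0$ when $q(D)>2d_l^+$. The genuine gap is that this inequality is never established, and the ingredients you list provably cannot produce it. Write $q=q(D)$, $\sigma=\sum_{i=1}^{l-1}(d_i^+-d_l^+)$, $x_u=1=\max_i x_i$, $S=N_D^+(u)\cap\{1,\dots,l-1\}$. Your first-order estimate (threshold split with $x_j\le 1$ on $S$ and $x_j\le d_l^+/(q-d_l^+)$ off $S$) gives, after clearing the denominator, $q\,(q-d_u^+-d_l^+)\le |S|\,(q-2d_l^+)$; your second-order estimate (eigen-equations at the out-neighbours, $x_k\le 1$, plus the majorization $\sum_{j\in N_D^+(u)}d_j^+\le d_u^+d_l^++\sigma$) gives $q^2-d_1^+q-2d_1^+d_l^+\le 2\sigma$. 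Neither is $f_l(q)\le 0$, and both are strictly weaker: for the paper's digraph $D_1$ (outdegrees $3,2,\dots,2$, $l=2$) the target is $q\le 3+\sqrt{3}\approx 4.73$, while these give $q\le 3+\sqrt{5}\approx 5.24$ and $q\le(3+\sqrt{65})/2\approx 5.53$; keeping the weights $x_j$ in $\sum_j d_j^+x_j$ does not help either, because substituting the eigen-equation back in is circular. You acknowledge that landing on the sharp constants ``is where essentially all of the work lies,'' but that sharpening \emph{is} the theorem. There is also a structural reason to doubt the plan as stated: the eigen-equations at $u$ and its out-neighbours are purely local, whereas the constants $d_1^+-1$ and $\sigma$ are global --- nothing forces the high-outdegree vertices to lie in $N_D^+(u)$. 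The equality analysis (``reverse every inequality''), and the handling of reducible $Q(D)$ for non-strongly-connected $D$, inherit the same gap.

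The paper's proof sidesteps the Perron vector entirely, and it is worth seeing why that dissolves your difficulty. It conjugates by the explicit diagonal matrix $U=\mathrm{diag}(x_1,\dots,x_{l-1},1,\dots,1)$ with $x_i=1+\frac{2(d_i^+-d_l^+)}{\phi_l-d_1^++1}$ and verifies, row by row, that every row sum of $B=U^{-1}Q(D)U$ is at most $\phi_l$; the defining identity $(\phi_l-2d_l^+)(\phi_l-d_1^++1)=2\sigma$ is exactly what makes each row-sum computation collapse to $\phi_l$, and Lemma \ref{lem21} then yields $q(D)=\rho(B)\le\phi_l$, with no irreducibility needed for the inequality. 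Equivalently, the diagonal of $U$ is a positive test vector $y$ with $Q(D)y\le\phi_l y$ --- precisely the two-valued vector your equality discussion predicts. This also makes the equality case mechanical: for strongly connected $D$, Lemma \ref{lem21} forces all row sums of $B$ to coincide, and unwinding when each row estimate is tight gives exactly ``regular, or $d_1^+=\cdots=d_{t-1}^+>d_t^+=\cdots=d_n^+$ with $d_1^-=\cdots=d_{t-1}^-=n-1$, $2\le t\le s$.'' If you want to salvage your approach, the realistic fix is to convert it into this test-vector form (guess $y$ of the extremal shape and check $Q(D)y\le\phi_l y$ row by row) rather than to extract $f_l(q)\le 0$ from the actual eigenvector.
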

\begin{proof}
Firstly, we show $q(D)\leq \phi_l$ for all $1\leq l\leq n$.

{\bf Case 1: }  $l=1$.

It is obvious that $q(D)\leq \phi_1=2d_1$  by Lemma \ref{lem21} and the definition of $Q(D)$.

{\bf Case 2: }  $2\leq l\leq n$.

By (\ref{eq21}), it is obvious that $\phi_l\geq d_1^+-1$,  and $(\phi_l-2d^+_l)(\phi_l-d^+_1+1)=2\sum\limits_{i=1}^{l-1}(d^+_i-d^+_l)$.

 Let   $U=diag(x_1, \ldots, x_{l-1}, 1, \ldots,1)$ be an $n\times n$ diagonal matrix,
 where $x_i=1+\frac{2(d^+_i-d^+_l)}{\phi_l-d^+_1+1}$ for $i\in\{1, 2, \ldots, l-1\}$.
Then   
 $x_1\geq x_2\geq \dots\geq x_{l-1}\geq 1$, $\sum\limits_{k=1}^{l-1}(x_k-1)=\phi_l-2d_l^+$,
and $U^{-1}=diag(x^{-1}_1,  \ldots, x^{-1}_{l-1}, 1, \ldots,1)$.

Let $Q(D)=(q_{ij})_{n\times n}=diag(d_1^+, \ldots, d_n^+)+A(D)$ be the signless Laplacian matrix of $D$ and $B=U^{-1}Q(D)U$.
Obviously, $B$ and $Q(D)$ have the same eigenvalues, thus $q(D)=\rho(B)$. Let $r_i(B)$ $(i=1, 2, \ldots, n)$ be the row sums of $B$.
Now we show $r_i(B)\leq \phi_l$ for any $1\leq i\leq n$.

{\bf Subcase 2.1: }  $1\leq i\leq l-1$.

\hskip.6cm $r_i(B)=\sum\limits_{k=1}^{l-1}\frac{x_k}{x_i}q_{ik}+\sum\limits_{k=l}^{n}\frac{1}{x_i}q_{ik}$

\hskip1.65cm$=\frac{1}{x_i}\sum\limits_{k=1}^{l-1}x_kq_{ik}+\frac{1}{x_i}\sum\limits_{k=l}^{n}q_{ik}$

\hskip1.65cm$=\frac{2d_i^+}{x_i}+\frac{1}{x_i}\sum\limits_{k=1}^{l-1}(x_k-1)q_{ik}$

\hskip1.65cm $=\frac{2d_i^+}{x_i}+\frac{1}{x_i}\left((x_i-1)q_{ii}+\sum\limits_{k=1,k\neq i}^{l-1}(x_k-1)q_{ik}\right)$

\hskip1.65cm $\leq \frac{2d_i^+}{x_i}+\frac{1}{x_i}\left(d^+_1(x_i-1)+\sum\limits_{k=1,k\neq i}^{l-1}(x_k-1)\right)$

\hskip1.65cm $=\frac{1}{x_i}\left(2d_i^++(d^+_1-1)(x_i-1)+\sum\limits_{k=1}^{l-1}(x_k-1)\right)$

\hskip1.65cm $=\frac{1}{x_i}\left(2d^+_i+(d^+_1-1)\frac{2(d^+_i-d^+_l)}{\phi_l-d^+_1+1}+\frac{2\sum\limits_{k=1}^{l-1}(d^+_k-d^+_l)}{\phi_l-d^+_1+1}\right)$

\hskip1.65cm $=\frac{1}{x_i}\left(2d^+_i+(d^+_1-1)\frac{2(d^+_i-d^+_l)}{\phi_l-d^+_1+1}+\frac{(\phi_l-2d^+_l)(\phi_l-d^+_1+1)}{\phi_l-d^+_1+1}\right)$

\hskip1.65cm $=\phi_l$,

\noindent with equality if and only if (1) and (2) hold: (1) $x_i=1$ or $q_{ii}=d^+_1$ for $x_i>1$, (2) $x_k=1$ or $q_{ik}=1$ for $x_k>1$ if $1\leq k\leq l-1$ with $k\neq i$.

{\bf Subcase 2.2: } $l\leq i\leq n$.

\hskip.6cm
$r_i(B)=\sum\limits_{k=1}^{l-1}x_kq_{ik}+\sum\limits_{k=l}^{n}q_{ik}$
$=2d^+_i+\sum\limits_{k=1}^{l-1}(x_k-1)q_{ik}$
$\leq 2d^+_l+\sum\limits_{k=1}^{l-1}(x_k-1)$$=\phi_l$,


\noindent with equality if and only if (3) and (4) hold: (3) $d^+_i=d^+_l$, (4) $x_k=1$ or $q_{ik}=1$ for $x_k>1$
if $1\leq k\leq l-1$.


Hence by Lemma \ref{lem21},   $q(D)=\rho(B)\leq \max\limits_{1\leq i\leq n}\{r_i(B)\}\leq \phi_l$ for any $l\in \{2,3,\ldots, n\}$.
Thus  $q(D)=\rho(B)\leq \max\limits_{1\leq i\leq n}\{r_i(B)\}\leq \min\limits_{2\leq l\leq n}\{\phi_l\}$.
\vskip.1cm
Combining the above two cases, $q(D)\leq  \min\limits_{1\leq l\leq n}\{\phi_l\}$.

Let $D$ be a  strongly connected digraph, and $\phi_s=\min\limits_{1\leq l\leq n}\{\phi_l\}$ for some $s\in\{1, 2, \ldots, n\}$.

{\bf Case 1: }   $s=1$. 

It is obvious that $\rho(Q(D))=q(D)=\phi_1=2d_1^+$ if and only if $D$ is regular by Lemma \ref{lem21} and the fact $d^+_1\geq d^+_2\geq\cdots\geq d^+_n$.

{\bf Case 2: }  $2\leq s\leq n$. 

Clearly,  $Q(D)$ and $B$ are  irreducible nonnegative matrices because $D$ is a  strongly connected digraph.
Then $q(D)=\phi_s$ if and only if $\phi_1\geq \phi_s$, $\rho(B)=\max\limits_{1\leq i\leq n}\{r_i(B)\}$ and
$\max\limits_{1\leq i\leq n}\{r_i(B)\}=\phi_s$.
Note that  $\rho(B)=\max\limits_{1\leq i\leq n}\{r_i(B)\}$  if and only if the row sums of $B$, $r_1(B), \ldots, r_n(B)$ are all equal by Lemma \ref{lem21},
we have  $q(D)=\phi_s$ if and only if $\phi_1\geq \phi_s$ and $r_1(B)=\cdots=r_n(B)=\phi_s.$

Note that $r_1(B)=\cdots=r_n(B)=\phi_s$ if and only if $B$ satisfies the following four conditions:

(a) $x_i=1$ or $q_{ii}(=d_i^+)=d^+_1$ for $x_i>1$ holds for all $1\leq i\leq s-1$;

(b) $x_k=1$ or $q_{ik}=1$ for $x_k>1$ if $1\leq k\leq s-1$ with $k\neq i$ holds for all $1\leq i\leq s-1$,

(c)  $d_s^+=d_{s+1}^+=\cdots=d_n^+$; 

(d) $x_k=1$ or $q_{ik}=1$ for $x_k>1$ if $1\leq k\leq s-1$ holds for all $s\leq i\leq n$.

Thus we only need to show (a)--(d) hold if and only if $D$ is regular or
there exists an integer $t$ with $2\leq t\leq s$ such that $d^+_1=\cdots=d^+_{t-1}>d^+_t=\cdots=d^+_n$, and   $d^-_1=\cdots=d^-_{t-1}=n-1.$

If (a)-(d) hold, we consider the following cases.

{\bf Subcase 2.1: } $x_1=1$.

 Then   $x_1=x_2=\cdots=x_{s-1}=1$  by $x_1\geq x_2\geq \cdots\geq x_{s-1}\geq 1$, and thus $d_1^+=d_{2}^+=\cdots=d_{s-1}^+=d_s^+$. It implies that  $D$ is a regular digraph from (c).

{\bf Subcase 2.2: } $x_1\geq \cdots\geq x_{t-1}>1$ and $x_t=\cdots=x_{s-1}=1$ for some $t\in \{2,\ldots, s\}.$

Then $q_{ii}=d_i^+=d_1^+$ for $1\leq i\leq t-1$ by (a) and $d_{t}^+=\cdots=d_{s-1}^+=d_s^+=\cdots=d_n^+$ by (c).
Thus $d_1^+=\cdots=d_{t-1}^+> d_{t}^+=\cdots=d_n^+$.
By (b) and (d),  $q_{ik}=1$ (that is, $(i, k)\in E(D)$) for all $i\in\{1, 2, \ldots, n\}$ and all $k\in \{1, 2, \ldots, t-1\}\backslash\{i\}$,
That implies $d^-_1=\cdots=d^-_{t-1}=n-1$.

Conversely, if $D$ is a regular digraph, then $d_1^+=d_{2}^+=\cdots=d_n^+$ and $\phi_1=\cdots=\phi_n=2d_1^+$, the result follows.
If there exists some $t$ with $2\leq t\leq s$ such that $d_1^+=\cdots=d_{t-1}^+> d_{t}^+=\cdots=d_n^+$,
and $d^-_1=\cdots=d^-_{t-1}=n-1$, 
then $x_1\geq \cdots\geq x_{t-1}>1=x_t=\cdots=x_{s-1}$,
and   $(i,k)\in E(D)$ for all $i\in\{1, 2, \ldots, n\}$ and all $k\in \{1, 2, \ldots, t-1\}\backslash\{i\}$,
thus (a), 
 (b), 
 (c), 
 (d) hold.  
Therefore, $r_i(B)=\phi_s$ for all $i\in\{1, 2, \ldots, n\}$
and thus by Lemma \ref{lem21}, $q(D)=\rho(B)= \max\limits_{1\leq i\leq n}r_i(B)=\phi_s$.
\end{proof}

\begin{exam}\label{exam28}
For digraph $\overset{\longleftrightarrow}{K_n}-(u, v)$ where $u,v\in V(\overset{\longleftrightarrow}{K_n})$, by directly calculating, we see that
the bound of (\ref{eq21}) is better than the bounds of (\ref{eq11})--(\ref{eq14}) in \cite{2013ARS} (see Table 1) because

$$\frac{3n-6+\sqrt{n^2+4n-4}}{2}<\min\{2n-2, \frac{2n^2-4n+1}{n-1},  n-1+\sqrt{n(n-2)} \}.$$
\end{exam}

\begin{exam}\label{exam29}
Let $D_1$ as shown in Fig.1. For $D_1$, the outdegree sequence is $3=d^+_1>d^+_2=d^+_3=\cdots=d^+_n=2$ and the indegree $d^-_1=n-1$. Then $q(D_1)=3+\sqrt{3}$ by Theorem \ref{theo27}. We can see from Table 1 that the bound of (\ref{eq21}) is better than the bounds of (\ref{eq11})--(\ref{eq14}) in \cite{2013ARS} because
$q(D_1)=3+\sqrt{3}<\min\{5, \frac{5}{2}+\frac{\sqrt{21}}{2}, 3+\sqrt{6}\}.$
\end{exam}

However, we also see that the bound of (\ref{eq21}) is not  better than the bounds of (\ref{eq12})--(\ref{eq13}) in \cite{2013ARS} for any digraph.
  For example, let $D_2$ and $D_3$  as shown in Fig.1. For digraph $D_3$, the upper bounds of  (\ref{eq12})--(\ref{eq13}) and (\ref{eq21}) are equal to 3;
  and for digraph $D_2$,  the upper bounds of  (\ref{eq12})--(\ref{eq13}) are less than the upper bound of (\ref{eq21}).

 \vskip0.25cm
$
\xy 0;/r3pc/: \POS (4,1) *\xycircle<3pc,3pc>{};
         \POS (3.01, .9) *@{*}*+!R{u_n}="n";
         \POS (4.3, .9) *@{*}*+!U{u_1}="a";
         \POS(5.0,.9)  *@{*}*+!L{u_2}="b";
          \POS(4.8,1.6)  *@{*}*+!L{u_3}="c";
          \POS(4.4,1.9)  *@{*}*+!D{u_4}="d";
          \POS(4,2)  *@{*}*+!D{u_5}="e";
           \POS(3.5,1.86)   \ar@{->}(3.5,1.86);(3.6,1.91);
          \POS(3.3,1.72)  *@{*}*+!R{u_6}="f";
          \POS(3.4,1.3) *@{}*+!L{\vdots};
          \POS(2.8,1.4) *@{}*+!L{\vdots};
        \POS "b" \ar@{-} "n";  \POS "a" \ar@{-} "c"; \POS "a" \ar@{-} "d";
        \POS "e" \ar@{->} (4.15, 1.45); "e"; \POS (4.15, 1.45) \ar@{-} "a";
        \POS "f" \ar@{->} (3.8, 1.3); "f"; \POS (3.8, 1.3) \ar@{-} "a";
        \POS(3.8,.9) \ar@{->}(3.8,.9);(3.76,.9);
        \POS(4.99,1.1) \ar@{->}(4.98,1.25);(4.99,1.1); 
         \POS(3,1.1) \ar@{->}(3,1.1);(3.01,1.15); 
        \POS(4.5,1.85)   \ar@{->}(4.5,1.85);(4.6,1.8); 
        \POS(4.3,.06)   \ar@{->}(4.3,.06);(4.24,.038);
        \POS(4.23,1.98)   \ar@{->}(4.23,1.98);(4.27,1.965);
 \endxy
    \hskip0.5cm
 \xy 0;/r3pc/: \POS (3.5,1) *\xycircle<3pc,3pc>{};
        \POS(3.2,1.3) *@{}*+!D{\overset{\longleftrightarrow}{K_d}};
        \POS(3.5,0.3) *@{}*+!D{\overrightarrow{P_{n-d+2}}};
        \POS(3.7,-0.2) *@{}*+!D{\ldots};
        \POS(4.3,1.6)  *@{*}*+!L{\hspace*{3pt}{\hspace*{-3pt}\hspace*{-3pt}}}="c";
        \POS(5.4,1.6) *@{}*+!R{u_{n-d+2}};
        \POS(2.95,.16)   *@{*}*+!R{u_2}="d";
        \POS(2.7,.4)    \ar@{->}(2.7,.4) ;(2.69,.415) ="e";
        \POS(4.3,.4)   \ar@{->}(4.3,.4);(4.29,.385)="f";
         \POS (2.515, .9) *@{*}*+!R{u_1}="g";
         \POS "c" \ar @{-} "g";
         \POS(4.5,.9)  *@{*}*+!L{\hspace*{3pt}{\hspace*{-3pt}\hspace*{-3pt}}}="h";
         \POS(5.6,.9) *@{}*+!R{u_{n-d+1}};
        \POS(4.49,1.1) \ar@{->}(4.48,1.25);(4.49,1.1)="i";
\endxy
\hskip0.5cm
 \xy 0;/r3pc/: \POS (4,1) *\xycircle<3pc,3pc>{};
        \POS(4.8,1.6)  *@{*}*+!L{u_2}="c";
        \POS(4.75,.3)   *@{*}*+!L{u_n}="d";
         \POS (3.015, .9) *@{*}*+!R{u_{n-g+2}}="g";
         \POS(5.0,.9)  *@{*}*+!L{u_1}="h";
        \POS(4.99,1.1) \ar@{->}(4.98,1.25);(4.99,1.1)="i"; 
        \POS(3.5,1.86)   \ar@{->}(3.5,1.86);(3.6,1.91)="b"; 
        \POS(3.5,.145)    \ar@{->}(3.5,.145);(3.49,.151) ="e"; 
        \POS(3.2,0.15) *@{}*+!L{\ddots};
        \POS(4.94,.6)   \ar@{->}(4.94,.6);(4.93,.585)="f";
        \POS "h" \ar @{->} (3.8,.9) \ar @{-} "g";
        \POS (4.1, .1) *@{}*+!D{\overrightarrow{C_g}};
        \POS (4, 1.3) *@{}*+!D{\overrightarrow{P_{n-g+2}}};
        \POS (4, 1.9) *@{}*+!D{\cdots};
 \endxy
$

\hskip1.5cm $D_1$ \hskip3.8cm$D_2$  \hskip5.3cm$D_3$
\vskip0.1mm
\hskip5cm  Fig.1. \hskip.1cm  The digraphs $D_1, D_2$ and $D_3$.

\vskip.2cm

\vskip.3cm
\hskip-0.8cm
\begin{tabular}{|c|c|c|c|c|c|}
 \hline
   \mbox{digraph}    & $(1.1) $    & $(1.2)$     & $(1.3)$     & $(1.4)$      & $(2.1)$  \\ \hline
$\overset{\longleftrightarrow}{K_n}-(u, v)$   & $2n-2$    & $\frac{2n^2-4n+1}{n-1}$     & $\frac{2n^2-4n+1}{n-1}$    & $n-1+\sqrt{n(n-2)}$    & $\frac{3n-6+\sqrt{n^2+4n-4}}{2}$  \\ \hline
$D_1$    & $5$    & $5$    & $\frac{5}{2}+\frac{\sqrt{21}}{2}$     & $3+\sqrt{6}$    & $3+\sqrt{3}$ \\ \hline
$D_2$    & $2d-1$    & $2d-2+\frac{2}{d}$    & $\frac{2d-1+\sqrt{1+\frac{4(d^2-2d+2)^2}{d(d-1)}}}{2}$     & $d+\sqrt{d^2-2d+2}$    & $\frac{3d-3+\sqrt{(d-1)^2+8}}{2}$ \\ \hline
$D_3$    & $3$    & $3$    & $3$     & $2+\sqrt{3}$    & $3$ \\ \hline
\end{tabular}
\vskip0.1mm
\hskip1.5cm  Table 1. \hskip.1cm  The upper bounds for digraphs $\overset{\longleftrightarrow}{K_n}-(u, v)$, $D_1, D_2$ and $D_3$.

\section{Some graph transformations on digraphs}

\hskip.6cm
In this section, we present some graph transformations on digraphs which are useful for the proof of the main results.

\begin{lem}\label{lem31}{\rm(\cite{1979}, Chapter 2)} 
Let $A$ be an $n\times n$ nonnegative matrix with the spectral radius $\rho(A)$,  $x=(x_1, x_2, \ldots, x_n)^T>0$ be any positive vector.
 If $\alpha\geq 0$  and $\alpha x\leq Ax$, then $\alpha\leq \rho(A)$.
  Furthermore, if $A$ is irreducible and $\alpha x< Ax$, then $\alpha< \rho(A)$.
 \end{lem}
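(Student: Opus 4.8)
The plan is to prove both inequalities by pairing the hypothesis $\alpha x \le Ax$ against a \emph{left} Perron eigenvector of $A$, which converts the vector inequality into a scalar one. Since $A$ is nonnegative, so is $A^T$, and $\rho(A^T)=\rho(A)$; hence by the Perron--Frobenius theorem there exists a nonzero nonnegative vector $y=(y_1,\ldots,y_n)^T\ge 0$ with $A^T y=\rho(A)\,y$, equivalently $y^T A=\rho(A)\,y^T$. This is the only external input I would need, and it is already part of the theory the paper invokes freely in the introduction.

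For the first assertion, I would left-multiply the inequality $\alpha x\le Ax$ by $y^T$. Because $y\ge 0$, multiplication by $y^T$ preserves the componentwise inequality, so
\[
\alpha\, y^T x = y^T(\alpha x) \le y^T(Ax) = (y^T A)\,x = \rho(A)\, y^T x .
\]
The scalar $y^T x=\sum_i y_i x_i$ is strictly positive: $x>0$ has all entries positive and $y\ge 0$ is nonzero, so at least one product $y_i x_i$ is positive while none is negative. Dividing through by $y^T x>0$ then yields $\alpha\le\rho(A)$.

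For the strict assertion, I would invoke irreducibility of $A$: by the Perron--Frobenius theorem the left eigenvector can then be taken strictly positive, $y>0$. The hypothesis $\alpha x< Ax$ means, in the sense of Definition~\ref{defn22}, that the nonnegative vector $Ax-\alpha x$ is nonzero, so some component $(Ax-\alpha x)_j>0$. Since $y_j>0$ and every remaining term is $\ge 0$, we obtain $y^T(Ax-\alpha x)>0$, that is $\rho(A)\,y^T x-\alpha\,y^T x>0$. As $y^T x>0$, this forces $\alpha<\rho(A)$.

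The argument is entirely standard, so I do not anticipate a genuine obstacle; the only points requiring care are (i) using the \emph{left} eigenvector rather than the right one, so that the strict positivity of $x$ guarantees $y^T x>0$, and (ii) for the strict case, reading ``$<$'' as ``$\le$ with at least one strict component'' per Definition~\ref{defn22} and pairing it with the strictly positive $y$ supplied by irreducibility. An alternative route avoiding eigenvectors is to iterate $\alpha x\le Ax$ to $\alpha^k x\le A^k x$ and take $k$-th roots using $\rho(A)=\lim_{k\to\infty}\|A^k\|^{1/k}$, but the strict case is handled far more cleanly through the positive left eigenvector, so I would present the eigenvector proof as the main one.
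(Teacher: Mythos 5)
Your proof is correct. Note that the paper does not prove this lemma at all: it is quoted as a known result from Berman--Plemmons (Chapter 2), so there is no proof in the paper to compare against. Your argument via a nonnegative left Perron eigenvector $y$ of $A$ (so that $y^T A=\rho(A)y^T$), pairing it with $\alpha x\le Ax$ and using $y^Tx>0$, is the standard proof of this subinvariance property, and your treatment of the strict case is sound: irreducibility gives $y\gg 0$, and reading $\alpha x< Ax$ per Definition~\ref{defn22} as ``componentwise $\le$ with at least one strict entry'' yields $y^T(Ax-\alpha x)>0$ and hence $\alpha<\rho(A)$.
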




In the rest of this section, let $x=(x_1, x_2, \ldots, x_n)^T$ be the unique positive  unit eigenvector corresponding to  $q(D)$,
while $x_i$ corresponds to the vertex $i$. 

\begin{theo}\label{theo32}
 Let $D=(V(D), E(D))$ be a simple digraph on $n$ vertices, $u, v, w\in V(D)$,  and $(u, v) \in E(D)$.
 Let  $H=D-\{(u, v) \}+\{(u, w) \}$ (Note that if $(u, w)\in E(D)$, then $H$ has multiple arc $(u, w)$.)
 If $x_w\geq x_v$, then $q(H)\geq q(D)$.   Furthermore, if $H$ is strongly connected and $x_w>x_v$, then $q(H)>q(D)$.
\end{theo}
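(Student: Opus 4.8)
The plan is to apply the Collatz--Wielandt type inequality of Lemma~\ref{lem31} to the matrix $Q(H)$ together with the positive eigenvector $x$ of $Q(D)$. The one essential structural observation is that the transformation $D\mapsto H$ does not change the out-degree of $u$: we delete the arc $(u,v)$ and create the arc $(u,w)$, so $d^+_u$ is preserved, and every other out-degree is obviously unchanged. Hence $diag(H)=diag(D)$, and therefore $Q(H)-Q(D)=A(H)-A(D)$ is a matrix supported entirely in row $u$, carrying a $-1$ in the column indexed by $v$ and a $+1$ in the column indexed by $w$. (Recall that the entries of $A$ count arcs, so the possibility that $(u,w)$ already lies in $E(D)$, creating a multiple arc, is handled automatically: $a_{uw}$ simply increases by one.)

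Next I would compute $Q(H)x$ componentwise using $Q(D)x=q(D)x$. For every index $i\neq u$ the $i$-th rows of $Q(H)$ and $Q(D)$ coincide, so $(Q(H)x)_i=(Q(D)x)_i=q(D)x_i$. For the row $u$, the single change gives $(Q(H)x)_u=(Q(D)x)_u-x_v+x_w=q(D)x_u+(x_w-x_v)$. Since $x_w\geq x_v$ by hypothesis, this yields $Q(H)x\geq q(D)x$ componentwise, that is $q(D)\,x\leq Q(H)x$ with $x>0$ and $q(D)\geq 0$. Lemma~\ref{lem31} then gives $q(D)\leq\rho(Q(H))=q(H)$, which is the first assertion.

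For the strict inequality, assume in addition that $H$ is strongly connected, so that $Q(H)$ is a nonnegative irreducible matrix. If moreover $x_w>x_v$, then the computation above gives a strict inequality in row $u$, hence $q(D)\,x\leq Q(H)x$ with $q(D)\,x\neq Q(H)x$; in the notation of Definition~\ref{defn22} this reads $q(D)\,x< Q(H)x$. The irreducible version of Lemma~\ref{lem31} then yields $q(D)<\rho(Q(H))=q(H)$. The only point requiring any care is the very first one, namely recognizing that deleting one out-arc of $u$ while adding another keeps $diag(D)$ fixed, so that the whole perturbation lives in a single row of the adjacency part; once this is seen, the remainder is a direct substitution into the eigenequation followed by an appeal to Lemma~\ref{lem31}, with no genuine obstacle beyond the bookkeeping.
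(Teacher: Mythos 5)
Your proposal is correct and follows essentially the same route as the paper's own proof: both observe that passing from $D$ to $H$ leaves all out-degrees (hence $diag(D)$) unchanged so that only row $u$ of $Q$ is perturbed, compute $(Q(H)x)_s=(Q(D)x)_s=q(D)x_s$ for $s\neq u$ and $(Q(H)x)_u=q(D)x_u+(x_w-x_v)$, and then invoke Lemma~\ref{lem31} (with its irreducible/strict version for the second assertion). Your write-up is in fact slightly more explicit than the paper's about why the diagonal is preserved and about the multiple-arc bookkeeping, but there is no substantive difference in method.
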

\begin{proof}
Now we show $(Q(H)x)_s\geq (Q(D)x)_s$  for any $s\in V(D)=V(H)$.




When $s\neq u$, then  $(Q(H)x)_s=\sum\limits_{t=1}^{n}q_{st}x_t=(Q(D)x)_s=q(D)x_s$ where $Q(D)=(q_{ij})$; 
when $s= u$, then  $(Q(H)x)_s-(Q(D)x)_s=x_w-x_v\geq 0.$
Thus $Q(H)x\geq Q(D)x=q(D)x$. By Lemma \ref{lem31}, $\rho(Q(H))=q(H)\geq q(D)$.

Similarly, if $H$ is strongly connected and $x_w>x_v$, then $q(H)>q(D)$  by Lemma \ref{lem31} immediately.
\end{proof}

\begin{lem}\label{lem33}{\rm(\cite{1979}, Chapter 2; \cite{1988}, Chapter 1)} 
Let $A$ be an $n\times n$ nonnegative matrix with the spectral radius $\rho(A)$.
Then $A$ is reducible if and only if $\rho(A)$ is the spectral radius of some proper principal submatrix of $A$.
\end{lem}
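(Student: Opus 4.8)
The plan is to prove the two implications separately, leaning on the strict monotonicity of the spectral radius under passage to principal submatrices (Lemma \ref{lem24}) for one direction and on the reduction to block-triangular form for the other. Neither direction should require any genuine computation.

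For the implication that a proper principal submatrix attaining $\rho(A)$ forces reducibility, I would argue by contraposition. Suppose $A$ is irreducible. Then Lemma \ref{lem24} gives that every proper principal submatrix $B$ of $A$ satisfies $\rho(B) < \rho(A)$ \emph{strictly}. Hence no proper principal submatrix can have spectral radius equal to $\rho(A)$, which is precisely the contrapositive of the desired implication.

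For the converse, suppose $A$ is reducible. By definition there is a permutation matrix $P$ with $P^{T} A P = \left(\begin{smallmatrix} B & C \\ 0 & D\end{smallmatrix}\right)$, where $B$ and $D$ are square blocks of orders strictly between $0$ and $n$. Since the characteristic polynomial of a block-triangular matrix factors as $\det(B - \lambda I)\,\det(D-\lambda I)$, the spectrum of $A$ is the union of the spectra of $B$ and $D$, and therefore $\rho(A) = \max\{\rho(B), \rho(D)\}$. Both $B$ and $D$ are principal submatrices of $P^{T} A P$, and a symmetric permutation merely relabels indices, so each corresponds to a genuine principal submatrix of $A$ with identical entries and hence identical spectral radius. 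As both blocks have order $< n$, whichever of $B$, $D$ realizes the maximum is a proper principal submatrix of $A$ whose spectral radius equals $\rho(A)$, completing the argument.

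I expect the only delicate point to be the bookkeeping in the last step: confirming that conjugating by $P$ does not change the class of principal submatrices, i.e. that a principal submatrix of $P^{T} A P$ coincides, up to the relabeling induced by $P$, with a principal submatrix of $A$ having the same entries. Everything else — the strict inequality supplied by irreducibility and the factorization of the characteristic polynomial of a triangular block matrix — is standard and needs no calculation.
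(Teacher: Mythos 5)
Your proof is correct: the contrapositive via Lemma \ref{lem24} settles the irreducible direction, and the block-triangular form with the factorization of the characteristic polynomial settles the reducible direction, with the permutation bookkeeping handled as you note (a principal submatrix of $P^{T}AP$ is simultaneously row- and column-permuted from a principal submatrix of $A$, hence has the same spectral radius). Note that the paper supplies no proof of its own --- the lemma is quoted from the cited monographs of Berman--Plemmons and Minc --- and your argument is precisely the standard one given there, so there is nothing to contrast.
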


The following result follows from Lemma \ref{lem33} in terms of digraph.

\begin{cor}\label{cor34}
Let $D$ be a digraph and $D_1, D_2, \ldots, D_s$ be the strongly connected components of
$D$. Then $q(D)=\max\{q(D_1), q(D_2), \ldots, q(D_s)\}$.
\end{cor}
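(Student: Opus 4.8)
The plan is to exploit the block-triangular structure that the strong components force on $Q(D)$ and then read the spectral radius off the diagonal blocks. First I would relabel the vertices so that the strongly connected components $D_1,\dots,D_s$ appear in a topological order of the condensation of $D$: whenever some arc runs from a vertex of $D_a$ to a vertex of $D_b$ with $a\neq b$, we require $a<b$. In this ordering $A(D)$, and hence $Q(D)=diag(D)+A(D)$, is block upper triangular, the $i$-th diagonal block $M_i$ being the principal submatrix of $Q(D)$ indexed by $V(D_i)$.

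The inequality $\max_i q(D_i)\leq q(D)$ is the easy half. Since the off-diagonal part of $M_i$ agrees with $A(D_i)$ while its diagonal records outdegrees in $D$ (which dominate the outdegrees inside $D_i$), we have $Q(D_i)\leq M_i$ entrywise, so Lemma \ref{lem23} gives $q(D_i)=\rho(Q(D_i))\leq\rho(M_i)$; as $M_i$ is a principal submatrix of $Q(D)$, Lemma \ref{lem24} gives $\rho(M_i)\leq\rho(Q(D))=q(D)$. Maximizing over $i$ yields the claim.

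For the reverse inequality I would use the triangular form directly: the characteristic polynomial of $Q(D)$ factors as the product of those of $M_1,\dots,M_s$, so the eigenvalues of $Q(D)$ are exactly the union of the eigenvalues of the diagonal blocks and therefore $q(D)=\rho(Q(D))=\max_i\rho(M_i)$. This is also the content of Lemma \ref{lem33}: if $Q(D)$ is irreducible then $D$ is strongly connected, $s=1$, and the statement is immediate; otherwise Lemma \ref{lem33} exhibits $q(D)$ as the spectral radius of a proper principal submatrix, and peeling the condensation off one component at a time realizes it as some $\rho(M_i)$.

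The step I would scrutinize most carefully --- and the main obstacle --- is the identification of each block $M_i$ with $Q(D_i)$, which is what turns $\max_i\rho(M_i)$ into $\max_i q(D_i)$. The delicate point is that the diagonal entry of $M_i$ at a vertex $v$ is the \emph{global} outdegree $d^+_v$ in $D$, exceeding the outdegree of $v$ inside $D_i$ by exactly the number of arcs $v$ sends to later components. Thus $M_i=Q(D_i)$ holds cleanly only for components with no outgoing inter-component arcs, while a source component can have $M_i$ strictly dominating $Q(D_i)$. Completing the argument therefore requires showing that the maximum $\max_i\rho(M_i)$ is attained on a component for which $M_i=Q(D_i)$ (or invoking some structural hypothesis that removes the surplus diagonal mass); reconciling these inter-component out-arcs with the diagonal of $Q$ is where the real work lies.
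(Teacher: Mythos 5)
Your two structural steps are sound: the inequality $\max_i q(D_i)\le q(D)$ follows exactly as you argue from Lemma~\ref{lem23} and Lemma~\ref{lem24}, and the block-triangular factorization does give $q(D)=\max_i\rho(M_i)$, where $M_i$ is the principal submatrix of $Q(D)$ indexed by $V(D_i)$. But the obstacle you flag at the end is not a technicality you failed to overcome --- it is fatal, because the corollary as stated is false. Simplest counterexample: let $D$ be the single arc $(u,v)$. Its strong components are two isolated vertices, so $\max_i q(D_i)=0$, yet $Q(D)=\left(\begin{smallmatrix}1&1\\0&0\end{smallmatrix}\right)$ has $q(D)=1$. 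The failure persists with nontrivial components: take $\overset{\longleftrightarrow}{K_3}$ together with a fourth vertex $w$ and the three arcs from the clique vertices to $w$; the two components have $q=4$ and $q=0$, but each clique vertex has global outdegree $3$, so the clique's diagonal block is $J_{3\times 3}+2I_{3\times 3}$ and $q(D)=5$. Exactly as you suspected, the surplus diagonal mass contributed by inter-component arcs can push $\rho(M_i)$ strictly above $q(D_i)$, so no argument can identify $\max_i\rho(M_i)$ with $\max_i q(D_i)$ in general.

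For comparison, the paper offers no real proof: it only remarks that the corollary ``follows from Lemma~\ref{lem33} in terms of digraph,'' and that deduction commits precisely the error you isolated --- an irreducible proper principal submatrix of $Q(D)$ indexed by the vertices of a component is \emph{not} the signless Laplacian $Q(D_i)$ of that component, since its diagonal records outdegrees in $D$ rather than in $D_i$. (The statement and the Lemma~\ref{lem33} argument are correct for the adjacency matrix $A(D)$, whose diagonal blocks are exactly the $A(D_i)$; that is evidently the template being copied.) The corollary does hold, and is all the paper actually needs, in the situations where it is invoked (Theorem~\ref{theo36} and Lemma~\ref{lem43}): there the dominant component is a sink --- no arcs leave it --- so its block equals its own signless Laplacian, while every other component is a single vertex of outdegree at most $1$, whose block has spectral radius at most $1<2\le q$ of the sink component. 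A correct replacement, which your write-up already essentially proves, is: $q(D)=\max_i\rho(M_i)$ always, and $q(D)=\max_i q(D_i)$ whenever $\max_i\rho(M_i)$ is attained at a component having no arcs to other components.
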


\begin{lem}\label{lem35}
Let $D$ $(\neq \overrightarrow{C_n})$ be a strongly connected digraph with $V(D)=\{u_1, u_2, \ldots, u_n\}$,
 $\overrightarrow{P}=u_1u_2\cdots u_l$ $(l\geq3)$ be a directed path of $D$ with $d^+_{u_i}=1$ $(i=2, 3, \ldots, l-1)$.
  Then we have $x_2<x_3<\cdots<x_{l-1}<x_l$.
\end{lem}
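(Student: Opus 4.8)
The plan is to extract a linear recurrence for the eigenvector entries along the directed path from the Perron eigenvalue equation $Q(D)x = q(D)x$, and then to exploit the strict inequality $q(D) > 2$ that the hypothesis $D \neq \overrightarrow{C_n}$ forces.

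First I would write the eigenvalue equation componentwise at each internal path vertex $u_i$ with $2 \le i \le l-1$ (writing $x_i$ for the entry of $x$ at $u_i$). Since
\[
(Q(D)x)_{u_i} = d^+_{u_i}\,x_i + \sum_{u_j \in N^+_D(u_i)} x_j,
\]
and, by hypothesis, $d^+_{u_i} = 1$, the vertex $u_i$ has exactly one out-neighbor; because $\overrightarrow{P} = u_1 u_2 \cdots u_l$ is a directed path we have $(u_i, u_{i+1}) \in E(D)$, so that unique out-neighbor is precisely $u_{i+1}$ and the sum collapses to the single term $x_{i+1}$. Hence the equation reads $x_i + x_{i+1} = q(D)\,x_i$, i.e.
\[
x_{i+1} = \bigl(q(D) - 1\bigr)\,x_i, \qquad i = 2, 3, \ldots, l-1.
\]

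Second, I would observe that $q(D) > 2$: since $D$ is strongly connected and $D \neq \overrightarrow{C_n}$, Corollary \ref{cor26} gives $q(D) > 2$, hence $q(D) - 1 > 1$. As $x$ is a positive vector, $x_i > 0$ for all $i$, so the recurrence yields $x_{i+1} = (q(D)-1)\,x_i > x_i$ for every $i$ with $2 \le i \le l-1$. Chaining these strict inequalities produces exactly $x_2 < x_3 < \cdots < x_{l-1} < x_l$, as required.

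I expect no serious obstacle here; the argument is essentially a one-line recurrence. The only points requiring care are the bookkeeping that guarantees the unique out-neighbor of each internal path vertex $u_i$ is its successor $u_{i+1}$ on $\overrightarrow{P}$ (so the sum over $N^+_D(u_i)$ reduces to the single term $x_{i+1}$), and the invocation of Corollary \ref{cor26} to obtain the \emph{strict} bound $q(D) > 2$ rather than merely $q(D) \ge 2$ — it is exactly the assumption $D \neq \overrightarrow{C_n}$ that makes each inequality strict and hence the whole chain strictly increasing.
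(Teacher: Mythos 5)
Your proof is correct and follows essentially the same route as the paper: write the eigenvalue equation $q(D)x_i = x_i + x_{i+1}$ at each internal path vertex (using $d^+_{u_i}=1$ to collapse the out-neighbor sum) and combine it with the strict bound $q(D)>2$ forced by $D\neq \overrightarrow{C_n}$. The only cosmetic difference is that you obtain $q(D)>2$ from Corollary \ref{cor26}, while the paper gets it from Corollary \ref{cor25} applied to a proper subdigraph $\overrightarrow{C_g}$ of $D$; both are immediate and equivalent here.
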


\begin{proof}
Since $D$ is a strongly connected digraph and $D\neq \overrightarrow{C_n}$, then $D$ contains a directed cycle, denoted by
$\overrightarrow{C_g}$ $(g\geq 2)$, as a proper subdigraph of $D$.
Thus $q(D)>q(\overrightarrow{C_g})=2$ by Corollary \ref{cor25}.
Therefore, for any $i\in \{2,3,\ldots, l-1\}$,  we have

\hskip2cm $2x_i<q(D)x_i=(Q(D)x)_i=d^+_{u_i}x_i+x_{i+1}=x_i+x_{i+1}.$

\noindent  Then $x_i<x_{i+1}$ and thus $x_2<x_3<\cdots<x_{l-1}<x_l$.
\end{proof}

Let $D_{uv}$ denote the simple digraph obtained from $D$ by deleting  arc $(u, v)$, identifying $u$ with $v$ of $D$ and deleting the multiple arcs.

\begin{theo}\label{theo36}
Let $D$ $(\neq \overrightarrow{C_n})$ be a strongly connected digraph with $V(D)=\{u_1, u_2, \ldots, u_n\}$,
and  $\overrightarrow{P}=u_1u_2\cdots u_l$ $(l\geq3)$ be a directed path of $D$ with $d^+_{u_i}=d^-_{u_i}=1$ $(i=2, 3, \ldots, l-1)$.
 Then for any $i\in \{2, 3, \ldots, l-1\}$,  $q(D_{u_{i-1}u_i})\geq q(D)$.
\end{theo}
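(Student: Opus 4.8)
The plan is to apply the positive-eigenvector comparison principle of Lemma~\ref{lem31} to the signless Laplacian of $D':=D_{u_{i-1}u_i}$, using as test vector a suitable extension of the Perron vector $x$ of $Q(D)$. Write $q:=q(D)$ and let $w$ denote the vertex of $D'$ obtained by identifying $u_{i-1}$ and $u_i$; every other vertex of $D$ survives the contraction. I define $y$ on $V(D')$ by $y_t=x_t$ for $t\neq w$ and $y_w=x_{u_{i-1}}$, so that $y>0$. The goal is to verify $Q(D')y\geq q\,y$ entrywise; once this is done, Lemma~\ref{lem31} gives $q(D')=\rho(Q(D'))\geq q=q(D)$.

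First I would record the local eigen-equation forced by the degree-one hypothesis on $u_i$. Since $d^+_{u_i}=d^-_{u_i}=1$, the unique in-neighbour of $u_i$ is $u_{i-1}$ and its unique out-neighbour is $u_{i+1}$, so the $u_i$-coordinate of $Q(D)x=qx$ reads $x_{u_i}+x_{u_{i+1}}=q\,x_{u_i}$, i.e. $x_{u_{i+1}}=(q-1)x_{u_i}$. This is the identity that makes the merged row work, together with the fact $q>2$ guaranteed by Corollary~\ref{cor26} because $D\neq\overrightarrow{C_n}$.

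Next I would check $Q(D')y\geq q\,y$ by partitioning $V(D')$ into three groups. (i) For $s\notin N^-_D(u_{i-1})\cup\{w\}$ the out-neighbourhood of $s$ is untouched by the contraction (no vertex other than $u_{i-1}$ points to $u_i$, as $d^-_{u_i}=1$), and all its out-neighbour values are unchanged, so the $s$-row reproduces $(Q(D)x)_s=q\,x_s=q\,y_s$ exactly. (ii) For $s\in N^-_D(u_{i-1})$ the only change is that the term $x_{u_{i-1}}$ is replaced by $y_w=x_{u_{i-1}}$, so again equality holds. (iii) For the merged vertex the in-neighbourhood equals $N^-_D(u_{i-1})$, while the out-neighbourhood is $(N^+_D(u_{i-1})\setminus\{u_i\})\cup\{u_{i+1}\}$; substituting the $u_{i-1}$-equation $\sum_{j\in N^+_D(u_{i-1})}x_j=(q-d^+_{u_{i-1}})x_{u_{i-1}}$ together with $x_{u_{i+1}}=(q-1)x_{u_i}$ collapses the $w$-row to
\[
(Q(D')y)_w=q\,x_{u_{i-1}}-x_{u_i}+x_{u_{i+1}}=q\,y_w+(q-2)x_{u_i},
\]
which exceeds $q\,y_w$ by $(q-2)x_{u_i}>0$. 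Hence $Q(D')y\geq q\,y$ and Lemma~\ref{lem31} finishes the argument.

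The main obstacle is precisely the degree bookkeeping in group (iii). The clean cancellation above uses $d^+_w=d^+_{u_{i-1}}$, which is correct only when $u_{i+1}$ is \emph{not} already an out-neighbour of $u_{i-1}$; if it is, the deletion of the resulting multiple arc lowers $d^+_w$ by one and the surplus $(q-2)x_{u_i}$ is destroyed. Controlling this requires the structural input that $u_{i-1}$'s out-neighbourhood along the path is clean. This is automatic for $i\geq 3$, where $u_{i-1}$ is itself an internal vertex with unique out-neighbour $u_i$ (so $d^+_w=1$ and no chord can occur), and it is the point one must secure separately for $i=2$; verifying that the out-degree of $w$ is preserved in the regime covered by the hypotheses is the delicate step on which the estimate hinges.
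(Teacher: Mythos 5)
Your verification is sound and more self-contained than the paper's: the paper never tests a vector against $Q(D_{u_{i-1}u_i})$ directly, but instead forms the auxiliary multidigraph $H=D-\{(u_{i-1},u_i)\}+\{(u_{i-1},u_{i+1})\}$, gets $q(H)\geq q(D)$ from Lemma \ref{lem35} (which gives $x_{u_{i+1}}>x_{u_i}$) together with Theorem \ref{theo32}, and then claims $q(H)=q(D_{u_{i-1}u_i})$ by splitting $H$ into strongly connected components via Corollary \ref{cor34}: the isolated vertex $u_i$ and the contracted digraph. Your single application of Lemma \ref{lem31} to the contraction, with $y_w=x_{u_{i-1}}$ and the local identity $x_{u_{i+1}}=(q-1)x_{u_i}$, compresses exactly those three steps; the two arguments run on the same fuel, and your surplus $(q-2)x_{u_i}$ in the merged row is the same quantity that the paper's arc redirection exploits.

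The obstacle you flag at the end, however, cannot be ``secured'' --- it is a genuine failure of the theorem as stated, and the paper's own proof is wrong at precisely that point. Take $n=3$ with $E(D)=\{(u_1,u_2),(u_2,u_3),(u_3,u_1),(u_1,u_3)\}$: the hypotheses hold for the path $u_1u_2u_3$ with $i=2$, yet $D_{u_1u_2}\cong \overset{\longleftrightarrow}{K_2}$ gives $q(D_{u_1u_2})=2$, while $q(D)>2$ because $Q(D)$ is irreducible with row sums $4,2,2$ (Lemma \ref{lem21}). What breaks in the paper is the step ``$q(H)=q(D_{u_{i-1}u_i})$ by Corollary \ref{cor34}'': when $(u_1,u_3)\in E(D)$, the nontrivial strong component of $H$ is not $D_{u_1u_2}$ but $D_{u_1u_2}$ with a doubled arc out of the merged vertex (in the example its signless Laplacian spectral radius is $3$, not $2$), and deleting that multiple arc only yields $q(D_{u_1u_2})\leq q(H)$, which points the wrong way. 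So the correct statement is the one your case analysis actually proves: the conclusion holds for all $i\geq 3$, and for $i=2$ whenever $(u_1,u_3)\notin E(D)$; an additional hypothesis to this effect should be added to the theorem. That restricted version suffices for every application in the paper, since in Corollary \ref{cor37} one contracts $(u,w)$ in $D^w$, where the chord $(u,v)$ has been deleted by construction.
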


\begin{proof}
For any $i\in \{2, 3, \ldots, l-1\}$, let $H=D-\{(u_{i-1}, u_i)\}+\{(u_{i-1}, u_{i+1})\}$.
Then by $d^-_{u_i}=1$, $H$ has exactly two strongly connected components, one is the isolated vertex $u_i$, the other is $D_{u_{i-1}u_i}$,
thus $q(H)=q(D_{u_{i-1}u_i})$ by Corollary \ref{cor34}.

On the other hand, for any $i\in \{2, 3, \ldots, l-1\}$, by Lemma \ref{lem35} and $d^+_{u_i}=1$, we have  $x_{i+1}>x_i$. Then  $q(H) \geq q(D)$ by Theorem \ref{theo32},
and thus  $q(D_{u_{i-1}u_i})=q(H)\geq q(D)$.
\end{proof}

Let $D=(V(D), E(D))$ be a digraph with $(u, v)\in E(D)$ and $w\notin V(D)$,
 $D^w=(V(D^w), E(D^w))$ with $V(D^w)=V(D)\cup \{w\}$, $E(D^w)=E(D)-\{(u, v)\}+\{(u, w), (w, v)\}$.
 Then the following result follows from Theorem \ref{theo36}.

\begin{cor}\label{cor37}
Let $D$ $(\neq \overrightarrow{C_n})$ be a strongly connected digraph, $w\notin V(D)$, and $D^w$ defined as before. Then $q(D)\geq q(D^w)$.
\end{cor}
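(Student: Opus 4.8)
The plan is to realize $D^w$ as a digraph to which Theorem \ref{theo36} applies, and to recognize the contraction $(D^w)_{uw}$ as $D$ itself. First I would observe that $D^w$ is obtained from $D$ by subdividing the arc $(u,v)$ with the new vertex $w$, so $D^w$ is again a simple digraph, now on $n+1$ vertices, and it is strongly connected: any directed walk in $D$ that traversed $(u,v)$ can be rerouted through the two-arc path $u\to w\to v$, while all other connectivity is inherited from $D$. I would also check that $D^w\neq\overrightarrow{C_{n+1}}$; indeed, if $D^w$ were a directed cycle, then contracting $w$ back would force $D=\overrightarrow{C_n}$, contradicting the standing hypothesis.

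Next I would exhibit in $D^w$ the directed path $u\to w\to v$, that is, take $\overrightarrow{P}=u_1u_2u_3$ with $u_1=u$, $u_2=w$, $u_3=v$ and $l=3$. By construction the only interior vertex $w$ satisfies $d^+_w=d^-_w=1$, since its unique out-arc is $(w,v)$ and its unique in-arc is $(u,w)$. Hence the hypotheses of Theorem \ref{theo36} are met for $D^w$ equipped with this path, and applying that theorem with $i=2$ yields $q\big((D^w)_{u_1u_2}\big)=q\big((D^w)_{uw}\big)\geq q(D^w)$.

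It then remains to identify $(D^w)_{uw}$ with $D$. Deleting the arc $(u,w)$ from $D^w$ leaves $w$ with the single incident arc $(w,v)$; identifying $u$ with $w$ converts $(w,v)$ into $(u,v)$ and leaves every other arc of $D^w$, hence of $D$, untouched, so after removing any multiple arcs we recover precisely $D$. Therefore $q(D)=q\big((D^w)_{uw}\big)\geq q(D^w)$, as claimed.

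The only genuinely delicate points are the bookkeeping in the contraction step—verifying that the identification exactly reverses the subdivision and produces $D$ with no spurious multiple arcs—and confirming that the excluded-cycle hypothesis transfers from $D$ to $D^w$ so that Theorem \ref{theo36} is legitimately applicable. Both are routine, and no new spectral estimate is needed beyond Theorem \ref{theo36} itself.
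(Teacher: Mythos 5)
Your proof is correct and follows essentially the same route as the paper's: the paper likewise observes that $D=(D^w)_{uw}$, that $D^w$ is strongly connected and not a directed cycle, and that $\overrightarrow{P}=uwv$ has $d^+_w=d^-_w=1$, then applies Theorem \ref{theo36}. Your additional verifications (that $D^w\neq\overrightarrow{C_{n+1}}$ and that the contraction recreates $D$ without multiple arcs) are details the paper states as ``clearly,'' so no substantive difference exists.
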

\begin{proof}
Clearly $D=D^w_{uw}$, $D^w(\neq \overrightarrow{C_n})$ is a strongly connected digraph,  $\overrightarrow{P}=uwv$ is a directed path of $D^w$ and
the outdegree and the indegree of $w$ in $D^w$ are equal to 1, then  $q(D)=q(D^w_{uw})\geq q(D^w)$ by Theorem \ref{theo36}.
\end{proof}

\section{The minimum signless Laplacian spectral radius of digraphs with given clique number}
\hskip.6cm
Let $\mathcal{C}_{n,d}$ denote the set of strongly connected digraphs on $n$ vertices with clique number $\omega(D)=d$. As we know, if $d=n$, then $\mathcal{C}_{n,n}=\{\overset{\longleftrightarrow}{K_n}\}$ and $q(\overset{\longleftrightarrow}{K_n})=2n-2$. If $d=1$, then $\overrightarrow{C_n}\in\mathcal{C}_{n,1}$ and $q(\overrightarrow{C_n})=2$. By Corollary \ref{cor26}, for any $D\in \mathcal{C}_{n,1}$, $q(D)\geq q(\overrightarrow{C_n})=2$ with equality if and only if $D\cong \overrightarrow{C_n}$. Thus we only  discuss the cases $2\leq d\leq n-1$.

Let $2\leq d\leq n-1$, $B_{n,d}=(V(B_{n,d}), E(B_{n,d}))$ be a digraph obtained by adding a directed
\vskip.1cm
\noindent path $\overrightarrow{P_{n-d+2}}=u_1u_2\cdots u_{n-d+2}$ to a clique $\overset{\longleftrightarrow}{K_d}$ such that $V(\overset{\longleftrightarrow}{K_d})\cap V(\overrightarrow{P_{n-d+2}})=\{u_1, u_{n-d+2}\}$ (as shown in Fig.2), where $V(B_{n,d})=\{u_1, u_2, \ldots, u_n\}$. Clearly, $B_{n,d}\in \mathcal{C}_{n,d}$. In this section, we will show that $B_{n, d}$ is the unique digraph with the minimum signless Laplacian spectral radius among all digraphs in $\mathcal{C}_{n,d}$ where $2\leq d\leq n-1$.

\vskip.2cm
$
        \hskip3cm
 \xy 0;/r3pc/: \POS (3.5,1) *\xycircle<3pc,3pc>{};
        \POS(3.2,1.3) *@{}*+!D{\overset{\longleftrightarrow}{K_d}};
        \POS(3.5,0.4) *@{}*+!D{\overrightarrow{P_{n-d+2}}};
        \POS(4.3,1.6)  *@{*}*+!L{\hspace*{3pt}{\hspace*{-3pt}\hspace*{-3pt}}}="c";
        \POS(5.4,1.6) *@{}*+!R{u_{n-d+2}};
        \POS(2.95,.16)   *@{*}*+!R{u_2}="d";
               \POS(2.7,.4)    \ar@{->}(2.7,.4) ;(2.69,.415) ="e";
        \POS(4.3,.4)   \ar@{->}(4.3,.4);(4.29,.385)="f";
         \POS (2.515, .9) *@{*}*+!R{u_1}="g";
         \POS "c" \ar @{-} "g";
         \POS(4.5,.9)  *@{*}*+!L{\hspace*{3pt}{\hspace*{-3pt}\hspace*{-3pt}}}="h";
         \POS(5.6,.9) *@{}*+!R{u_{n-d+1}};
        \POS(4.49,1.1) \ar@{->}(4.48,1.25);(4.49,1.1)="i";
        \POS(3.7,-0.2) *@{}*+!D{\ldots};
 \endxy
        \hskip1cm
\xy 0;/r3pc/: \POS (1,1) *\xycircle<2pc,2pc>{}; \POS (2.33,1)*\xycircle<2pc,2pc>{}
        \POS(1.665,1) *@{*}*+!R{u_1}="n";
        \POS(1.28,0.38) *@{*}*+!U{u_{n-d+2}}="k";
        \POS(0.8,0.8) *@{}*+!D{\overset{\longleftrightarrow}{K_d}};
        \POS(2.1,1.6)   *@{*}*+!D{u_{n-d+1}}="d";
        \POS(1.8,.58)    \ar@{->}(1.8,.58) ;(1.79,.6) ="e";
        \POS (2.33, .8) *@{}*+!D{\overrightarrow{C_{n-d+1}}};
        \POS (3.1, .8) *@{}*+!D{\vdots};
         \POS(2.0,.4)  *@{*}*+!U{u_2}="h";
        \POS(2.98,0.9)   \ar@{->}(2.98,1.15);(2.99,1)="j";
        \POS(1.78,1.37) \ar@{->}(1.78,1.37);(1.79,1.385)="i";
\endxy
$
\vskip0.00001mm
\hskip4.5cm $B_{n,d}$ \hskip4.3cm  $B'_{n,d}$
\vskip0.1mm
\hskip4.5cm  Fig.2. \hskip.1cm  The digraphs $B_{n,d}$ and $B'_{n,d}$.

\begin{lem}\label{lem41}
Let $B'_{n,d}=B_{n,d}-\{(u_{n-d+1},u_{n-d+2})\}+\{(u_{n-d+1},u_1)\}$ (as shown in Fig.2). Then $q(B'_{n,d})>q(B_{n,d})$.
\end{lem}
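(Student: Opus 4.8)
The plan is to exhibit $B'_{n,d}$ as a single arc-rotation of $B_{n,d}$ and then apply the monotonicity transformation of Theorem~\ref{theo32}. Setting $D = B_{n,d}$, $u = u_{n-d+1}$, $v = u_{n-d+2}$ and $w = u_1$, the arc $(u,v) = (u_{n-d+1}, u_{n-d+2})$ lies in $E(B_{n,d})$ and $B'_{n,d} = D - \{(u,v)\} + \{(u,w)\}$ is precisely the digraph $H$ of Theorem~\ref{theo32}. So once I check that $B'_{n,d}$ is strongly connected and that the Perron entries $x$ of $Q(B_{n,d})$ satisfy $x_{u_1} > x_{u_{n-d+2}}$, Theorem~\ref{theo32} will yield $q(B'_{n,d}) > q(B_{n,d})$ at once. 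Strong connectivity of $B'_{n,d}$ is immediate from Fig.~2: the clique $\overset{\longleftrightarrow}{K_d}$ and the directed cycle $u_1 u_2 \cdots u_{n-d+1} u_1$ are each strongly connected and share the vertex $u_1$. Thus the whole problem reduces to the eigenvector inequality $x_{u_1} > x_{u_{n-d+2}}$.

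To establish it, let $\lambda = q(B_{n,d})$ and let $x$ be the positive unit Perron eigenvector of $Q(B_{n,d})$. The two shared vertices have out-neighbourhoods $N^+(u_1) = \{u_2, u_{n-d+2}, u_{n-d+3}, \ldots, u_n\}$ and $N^+(u_{n-d+2}) = \{u_1, u_{n-d+3}, \ldots, u_n\}$, with outdegrees $d^+_{u_1} = d$ and $d^+_{u_{n-d+2}} = d-1$. Writing $S = \sum_{k=n-d+3}^{n} x_{u_k}$ for the common contribution of the remaining clique vertices, the rows of $Q(B_{n,d})\,x = \lambda x$ at $u_1$ and $u_{n-d+2}$ read
\[
\lambda x_{u_1} = d\,x_{u_1} + x_{u_2} + x_{u_{n-d+2}} + S, \qquad \lambda x_{u_{n-d+2}} = (d-1)x_{u_{n-d+2}} + x_{u_1} + S.
\]
The key observation is that $S$ appears identically in both rows, so subtracting eliminates it with no appeal to symmetry of the interior clique vertices. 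Putting $\delta = x_{u_1} - x_{u_{n-d+2}}$ and collecting terms gives
\[
(\lambda - d + 1)\,\delta = x_{u_2} + x_{u_{n-d+2}}.
\]

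It remains to sign both sides. Since $2 \le d \le n-1$, the clique $\overset{\longleftrightarrow}{K_d}$ is a proper subdigraph of the strongly connected digraph $B_{n,d}$, so Corollary~\ref{cor25} together with $q(\overset{\longleftrightarrow}{K_d}) = 2d-2$ gives $\lambda > 2d-2$, whence $\lambda - d + 1 > d - 1 > 0$. The right-hand side $x_{u_2} + x_{u_{n-d+2}}$ is strictly positive because $x > 0$, so $\delta > 0$, i.e.\ $x_{u_1} > x_{u_{n-d+2}}$, and Theorem~\ref{theo32} finishes the proof. I expect the eigenvector comparison to be the only substantive step; the fortunate feature is that the shared clique neighbours cancel on subtraction, so there is no need to solve for individual Perron entries nor to argue that the $d-2$ interior clique vertices carry equal weight, and positivity of $x$ does the rest.
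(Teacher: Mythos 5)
Your proposal is correct and follows essentially the same route as the paper: reduce to the Perron-eigenvector inequality $x_{u_1} > x_{u_{n-d+2}}$ via Theorem~\ref{theo32}, write the two eigenvalue equations at $u_1$ and $u_{n-d+2}$ so the common clique contribution cancels, obtain $(q-d+1)(x_{u_1}-x_{u_{n-d+2}}) = x_{u_2}+x_{u_{n-d+2}} > 0$, and conclude using $q > 2d-2$ from Corollary~\ref{cor25}. The only difference is that you spell out the strong connectivity of $B'_{n,d}$, which the paper simply asserts.
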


\begin{proof}
Clearly, $B'_{n,d}$ is strongly connected. Let  $x=(x_1, x_2, \ldots, x_n)^T$ be the unique positive unit eigenvector corresponding to $q=q(B_{n,d})$, while $x_i$ corresponds to the vertex $u_i$. By Theorem \ref{theo32}, we only need to show $x_1>x_{n-d+2}$.

Since $\overset{\longleftrightarrow}{K_d}$ is a  proper subdigraph of $B_{n,d}$,
then  $q=q(B_{n,d})>q(\overset{\longleftrightarrow}{K_d})=2d-2$ by Corollary \ref{cor25}.
Let $V_1=V(\overset{\longleftrightarrow}{K_d})\backslash\{u_1, u_{n-d+2}\}$. We have

$(Q(B_{n,d})x)_1=qx_1=dx_1+\sum\limits_{v\in V_1}x_v+x_2+x_{n-d+2}, $

\noindent  and

 $(Q(B_{n,d})x)_{n-d+2}=qx_{n-d+2}=(d-1)x_{n-d+2}+\sum\limits_{v\in V_1}x_v+x_1$.

Then $(q-d+1)(x_1-x_{n-d+2})=x_2+x_{n-d+2}>0$. Thus $x_1>x_{n-d+2}$ by $q>2d-2$.
\end{proof}

\begin{theo}\label{theo42}
Let $2\leq d\leq n-1$ and $D\in\mathcal{C}_{n,d}$ be a digraph. Then $q(D)\geq q(B_{n,d})$, with equality if and only if $D\cong B_{n,d}$.
\end{theo}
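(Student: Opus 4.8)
The plan is to fix a digraph $D\in\mathcal{C}_{n,d}$ of minimum signless Laplacian spectral radius (such a $D$ exists because $\mathcal{C}_{n,d}$ is finite) and to prove that necessarily $D\cong B_{n,d}$; the asserted inequality $q(D)\ge q(B_{n,d})$ with equality only at $B_{n,d}$ then follows at once. Throughout I would let $x$ be the positive unit Perron eigenvector of $Q(D)$, fix a maximum clique $K\cong\overset{\longleftrightarrow}{K_d}$ with vertex set $S$, and set $T=V(D)\setminus S$, so that $|T|=n-d\ge 1$ and, by strong connectivity, every vertex of $T$ must both reach and be reached from $S$.

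First I would pin down the local structure at the vertices of $T$. By Corollary~\ref{cor25}, deleting any arc of a strongly connected digraph strictly lowers $q$, so in a minimizer no arc may be removed without destroying strong connectivity or lowering the clique number; in particular there can be no arcs inside $T$, nor between $T$ and $S$, beyond those that are forced. To rule out branching I would invoke the arc-relocation Theorem~\ref{theo32}: if some vertex of $T$ had out-degree (or, symmetrically, in-degree) at least two, I would move the arc pointing to the endpoint of smaller eigenvector entry, producing a digraph still in $\mathcal{C}_{n,d}$ with strictly smaller $q$, contradicting minimality. The monotonicity of $x$ along directed paths supplied by Lemma~\ref{lem35} is exactly what makes these comparisons go in the right direction. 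The outcome of this step is that every vertex of $T$ has in-degree and out-degree equal to $1$, so $T$ decomposes into vertex-disjoint directed paths, each of which leaves $S$ and re-enters $S$.

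Next I would consolidate these paths into a single one. The prototype is Lemma~\ref{lem41}: a path whose two ends are attached to the \emph{same} clique vertex (equivalently, a directed cycle hung on $K$, as in $B'_{n,d}$) has strictly larger $q$ than the path attached at two distinct clique vertices. For two or more attached paths I would repeatedly reroute the initial arc of one path onto the terminus of another by Theorem~\ref{theo32}, again choosing the move that points toward the smaller eigenvector entry and using Lemma~\ref{lem35} to guarantee a strict decrease of $q$ unless the external structure is already a single directed path. Because $\overset{\longleftrightarrow}{K_d}$ is vertex-transitive, every single directed path that enters $K$ at one vertex and leaves at another is isomorphic to $B_{n,d}$, while the degenerate single-vertex attachment is excluded by Lemma~\ref{lem41}. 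Hence the minimizer is $B_{n,d}$, and tracking the strictness in Corollary~\ref{cor25}, Theorem~\ref{theo32} and Lemma~\ref{lem41} yields uniqueness.

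I expect the consolidation in the third step to be the main obstacle. The difficulty is twofold: the Perron eigenvector $x$ changes under every transformation, so each relocation inequality must be re-justified from the monotonicity along the current paths rather than from a single fixed vector; and every move must be checked to preserve both strong connectivity and the exact value $\omega(D)=d$ — a relocated arc must not create a clique larger than $K$, collapse the clique number below $d$, or disconnect the digraph. Making the merge of several external paths into one rigorous, with a guaranteed strict drop in $q$ at each stage, is the delicate heart of the argument; the earlier degree-reduction step is comparatively routine once Lemma~\ref{lem35} is in hand.
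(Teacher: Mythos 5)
There is a genuine gap, and it sits exactly where your whole plan pivots: the direction of Theorem~\ref{theo32}. That theorem (via Lemma~\ref{lem31}) only certifies \emph{increases}: if $H=D-\{(u,v)\}+\{(u,w)\}$ and $x_w\ge x_v$ for the Perron vector $x$ \emph{of $D$}, then $q(H)\ge q(D)$. It can never produce the conclusion ``strictly smaller $q$, contradicting minimality'' that your degree-reduction and path-consolidation steps require. To show $q(D')<q(D)$ by relocation you must instead write $D$ as an arc-relocation of $D'$ and verify the eigenvector inequality in the Perron vector \emph{of the modified digraph $D'$} --- i.e.\ you need Lemma~\ref{lem35}-type monotonicity for the digraph you are constructing, not for the minimizer you are destroying. (This is precisely how the paper uses it in Lemma~\ref{lem41}: the eigenvector analyzed is that of the ``nice'' digraph $B_{n,d}$, and the conclusion is that $B'_{n,d}$ is \emph{larger}.) Alternatively you would need a dual of Lemma~\ref{lem31} ($Ax\le\alpha x$, $x>0$ implies $\rho(A)\le\alpha$, strict under irreducibility), which is true but is not among the paper's tools and still forces you to check strong connectivity and preservation of $\omega(D)=d$ after every move. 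As written, every appeal to Theorem~\ref{theo32} in your second and third steps points the wrong way. Moreover, your third step (merging several external paths into one) is not actually proved --- you yourself flag it as ``the delicate heart'' --- and it involves simultaneous relocation and deletion of arcs, so even the corrected relocation inequality alone does not settle it.

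For comparison, the paper avoids analyzing a minimizer altogether, so none of these difficulties arise. For an arbitrary $D\in\mathcal{C}_{n,d}$ it deletes vertices and arcs to extract a subdigraph $H$ that is either $B_{d+l-2,d}$ (clique plus a path re-entering at a different clique vertex) or $B'_{d+l-1,d}$ (clique plus a cycle attached at one clique vertex); Corollary~\ref{cor25} gives $q(H)\le q(D)$ with equality iff $H\cong D$. It then \emph{subdivides} arcs of the path or cycle repeatedly, using Corollary~\ref{cor37}, which only lowers $q$, until the digraph has $n$ vertices, landing on $B_{n,d}$ or $B'_{n,d}$; finally Lemma~\ref{lem41} gives $q(B_{n,d})<q(B'_{n,d})$. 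Every inequality in this chain points in the needed direction, and the only eigenvector comparison ever required is the single one inside Lemma~\ref{lem41}, for the explicit digraph $B_{n,d}$. If you want to salvage your variational approach, the essential repair is to prove and use an upper-bound companion to Lemma~\ref{lem31}/Theorem~\ref{theo32}, and then carry out the merge of two external paths in full detail (relocation plus arc deletion, with strong connectivity and clique number checked); but the subdigraph-plus-subdivision route is substantially shorter.
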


\begin{proof}
 Clearly,  $\overset{\longleftrightarrow}{K_d}$ is a proper subdigraph of $D$ because of $D\in \mathcal{C}_{n,d}$.
 Since $D$ is strongly connected, then delete the vertices or arcs of $D$ such that the resulting digraph is denoted by $H$,
 while $H\cong B_{d+l-2, d}$ $(l\geq 3)$ or $H\cong B'_{d+l-1, d}$ $(l\geq 2)$.
By Corollary \ref{cor25}, $q(H)\leq q(D)$ with equality if and only if $H\cong D$.

{\bf Case 1: } $H\cong B_{d+l-2, d}$ $(l\geq 3)$.

 Insert $n-d-l+2$ vertices into $\overrightarrow{P_l}$ such that the resulting digraph is $B_{n, d}$.
 Then  $q(B_{n,d})\leq q(H)$ by using Corollary \ref{cor37} $n-d-l+2$ times.

{\bf Case 2: }  $H\cong B'_{d+l-1, d}$ $(l\geq 2)$.

Insert $n-d-l+1$ vertices into the directed cycle $\overrightarrow{C_l}$ such that the resulting digraph is $B'_{n, d}$.
Then  $q(B'_{n,d})\leq q(H)$ by using Corollary \ref{cor37}  $n-d-l+1$ times, and thus $q(B_{n,d})<q(B'_{n,d})\leq q(H)$ by Lemma \ref{lem41}.

Combining the above two cases, we have $q(D)\geq q(B_{n,d})$ with equality if and only if $D\cong B_{n,d}$.
\end{proof}

Now we  estimate the signless Laplacian spectral radius of $B_{n,d}$ and show $2=q(\overrightarrow{C_n})<q(B_{n,2})<q(B_{n,3})<\cdots<q(B_{n,n-1})<q(\overset{\longleftrightarrow}{K_n})=2n-2$.

\begin{lem}\label{lem43}
Let $2\leq d\leq n-1$ and $B_{n,d}$ be defined as above. Then
$2d-2<q(B_{n,d})\leq\frac{3d-3+\sqrt{(d-1)^2+8}}{2}$.
\end{lem}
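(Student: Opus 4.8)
The plan is to prove the two inequalities by invoking the two engines already built in Section 2: Corollary \ref{cor25} for the lower bound and Theorem \ref{theo27} for the upper bound. Essentially no new work is needed beyond a correct reading of the outdegree sequence of $B_{n,d}$.

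For the lower bound, I would note that $\overset{\longleftrightarrow}{K_d}$ embeds in $B_{n,d}$ as a \emph{proper} subdigraph: since $d\le n-1$, at least the path vertices $u_2,\ldots,u_{n-d+1}$ lie outside the clique. As $B_{n,d}$ is strongly connected, Corollary \ref{cor25} gives $q(B_{n,d})>q(\overset{\longleftrightarrow}{K_d})=2d-2$, which is exactly the strict lower bound. (This is the same observation already used in the proof of Lemma \ref{lem41}.)

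For the upper bound, the only real task is bookkeeping. Each clique vertex has outdegree $d-1$ inside $\overset{\longleftrightarrow}{K_d}$; the path adds one out-arc only at $u_1$ (the arc $(u_1,u_2)$), while $u_{n-d+2}$ is the terminus of the path and receives no extra outdegree. Hence, in non-increasing order, the outdegree sequence of $B_{n,d}$ is
\[
d^+_1=d,\qquad d^+_2=\cdots=d^+_d=d-1,\qquad d^+_{d+1}=\cdots=d^+_n=1,
\]
the single $d$ coming from $u_1$, the $d-1$ values equal to $d-1$ from the remaining clique vertices (including $u_{n-d+2}$), and the $n-d$ values equal to $1$ from the internal path vertices.

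With this sequence the key step is to apply the bound $q(D)\le\phi_s=\min_l\phi_l$ of Theorem \ref{theo27} and simply note $\phi_s\le\phi_2$. Evaluating (\ref{eq21}) at $l=2$, where $d^+_2=d-1$ and $\sum_{i=1}^{1}(d^+_i-d^+_2)=d^+_1-d^+_2=1$, I get
\[
\phi_2=\frac{d+2(d-1)-1+\sqrt{(2(d-1)-d+1)^2+8}}{2}=\frac{3d-3+\sqrt{(d-1)^2+8}}{2},
\]
so that $q(B_{n,d})\le\phi_s\le\phi_2=\tfrac{3d-3+\sqrt{(d-1)^2+8}}{2}$. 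I do not anticipate any genuine obstacle: both inequalities are corollaries of results proved earlier in the excerpt, and the content lies entirely in correctly identifying the outdegree sequence and recognizing that the target upper bound is exactly $\phi_2$. A consistency check is that this value equals the entry in row $D_2$, column $(2.1)$ of Table 1, reflecting that $B_{n,d}$ is precisely the digraph $D_2$.
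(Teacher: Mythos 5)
Your proof is correct, but the upper-bound argument is not the one the paper uses to prove Lemma \ref{lem43}; it is precisely the alternative the authors record in the Remark immediately following that lemma. The paper's own proof stays inside the transformation machinery of Section 3: using the eigenvector monotonicity of Lemma \ref{lem35}, it shifts the arc $(u_1,u_2)$ to $(u_1,u_{n-d+2})$, so Theorem \ref{theo32} gives $q(H)\geq q(B_{n,d})$ for the modified digraph $H$; Corollary \ref{cor34} then reduces $q(H)$ to the spectral radius of its single nontrivial strongly connected component $H_1=\overset{\longleftrightarrow}{K_d}\cup\{(u_1,u_{n-d+2})\}$ (with a multiple arc), whose eigenvector equations are solved exactly to give $q(H_1)=\frac{3d-3+\sqrt{(d-1)^2+8}}{2}$, via the quadratic $q^2-(3d-3)q+(2d^2-4d)=0$. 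You instead read off the outdegree sequence $d,\,d-1,\ldots,d-1,\,1,\ldots,1$ (your bookkeeping is right: $u_1$ has outdegree $d$, the other $d-1$ clique vertices including $u_{n-d+2}$ have outdegree $d-1$, the $n-d$ internal path vertices have outdegree $1$) and quote Theorem \ref{theo27} at $l=2$, where indeed $\phi_2=\frac{3d-3+\sqrt{(d-1)^2+8}}{2}$ and $q(B_{n,d})\le\phi_s\le\phi_2$ holds by definition of $\phi_s$ as a minimum; your lower-bound argument via Corollary \ref{cor25} is identical to the paper's. As for what each route buys: yours is shorter and purely mechanical once Theorem \ref{theo27} is in hand, and it confirms the identification of $B_{n,d}$ with the digraph $D_2$ of Table 1; the paper's route is longer but identifies the bound as the exact signless Laplacian spectral radius of a concrete digraph obtained from $B_{n,d}$ by one arc shift, which explains where the constant comes from and exercises the lemmas (Theorem \ref{theo32}, Corollary \ref{cor34}, Lemma \ref{lem35}) that carry the remaining arguments of Sections 4--6.
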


\begin{proof}
Clearly,  $q(B_{n,d})>q(\overset{\longleftrightarrow}{K_d})=2d-2$ since Corollary \ref{cor25} and $\overset{\longleftrightarrow}{K_d}$ is a proper subdigraph of $B_{n,d}$.

Let $x=(x_1, x_2, \ldots, x_n)^T$ be the unique positive unit eigenvector  corresponding to $q(B_{n,d})$,
while $x_i$ corresponds to he vertex $u_i$. Then  $x_{2}<x_{3}<\cdots<x_{n-d+2}$ by Lemma \ref{lem35}.

Let $H=B_{n, d}-\{(u_1, u_2)\}+\{(u_1, u_{n-d+2})\}$. Then  $q(H)\geq q(B_{n, d})$ by Theorem \ref{theo32}.
It is easy to check that $H$ has $n-d+1$ strongly connected components, one is $H_1=\overset{\longleftrightarrow}{K_d}\cup\{(u_1, u_{n-d+2)}\}$ which has multiple arcs $(u_1, u_{n-d+2})$,  and the others are isolated vertices $u_2, u_3, \ldots, u_{n-d+1}$.
Then $q(H)=q(H_1)$ by Corollary \ref{cor34}.

Let $q=q(H_1)$ and $y=(y_1, y_2, \ldots, y_d)^T$ be the unique positive unit eigenvector corresponding to $q$.
Then for any two vertices $u, v\in V(\overset{\longleftrightarrow}{K_d})\backslash\{u_1\}$,
let $V_1=V(\overset{\longleftrightarrow}{K_d})\backslash\{u, v\}$, we have

$(Q(H_1)y)_u=qy_u=(d-1)y_u+\sum\limits_{w\in V_1}y_w+y_v$,
$(Q(H_1)y)_v=qy_v=(d-1)y_v+\sum\limits_{w\in V_1}y_w+y_u$.

Thus $y_u=y_v$, it implies $y_s=y_u$ for any $s\in V(\overset{\longleftrightarrow}{K_d})\backslash\{u_1\}$ by the choice of $u$ and $v$. Then

\vskip.2cm
$\left\{
\begin{array}{c}
qy_u=(2d-3)y_u+y_{u_1}, \\[0.2cm]
qy_{u_1}=dy_{u_1}+dy_u.
\end{array}
\right.$
\vskip.2cm

Then we have $q^2-(3d-3)q+(2d^2-4d)=0$, and $q=\frac{3d-3+\sqrt{(d-1)^2+8}}{2}$ by $q=q(H_1)=q(H)\geq q(B_{n, d})>2d-2$.
Thus $q(B_{n,d})\leq\frac{3d-3+\sqrt{(d-1)^2+8}}{2}$.
\end{proof}

\begin{rem}
Since the outdegree sequence of $B_{n,d}$ is $d^+_1=d, d^+_2=d^+_3=\cdots=d^+_{d}=d-1$, $d^+_{d+1}=d^+_{d+2}=\cdots=d^+_{n}=1$, we can also proof that $q(B_{n,d})\leq\frac{3d-3+\sqrt{(d-1)^2+8}}{2}=\phi_2=\ldots=\phi_d$ by Theorem \ref{theo27}.
\end{rem}

From Lemma \ref{lem43}, we immediately get the following corollary.
\begin{cor}\label{cor44}Let $n\geq 4$. Then 
$2=q(\overrightarrow{C_n})<q(B_{n,2})<4<q(B_{n,3})<6<\cdots<2n-4<q(B_{n,n-1})<q(\overset{\longleftrightarrow}{K_n})=2n-2$.
\end{cor}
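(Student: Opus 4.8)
The plan is to sharpen Lemma \ref{lem43} into a clean two-sided estimate: I claim that for every $d$ with $2\leq d\leq n-1$ one has
\begin{equation*}
2d-2<q(B_{n,d})<2d,
\end{equation*}
after which the entire chain follows by stringing these together with the even integers $2d$ as separators. The lower bound $q(B_{n,d})>2d-2$ is already furnished verbatim by Lemma \ref{lem43}, so the only work is the strict upper bound. For that I would start from the upper estimate $q(B_{n,d})\leq\frac{3d-3+\sqrt{(d-1)^2+8}}{2}$ of Lemma \ref{lem43} and show that this closed-form quantity is itself strictly below $2d$.

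To verify $\frac{3d-3+\sqrt{(d-1)^2+8}}{2}<2d$, I would clear the denominator to reach the equivalent inequality $\sqrt{(d-1)^2+8}<d+3$, which may legitimately be squared since both sides are positive for $d\geq 1$. Squaring reduces it to $(d-1)^2+8<(d+3)^2$, that is $9-2d<9+6d$, that is $0<8d$, which holds for every $d>0$. Hence $q(B_{n,d})<2d$ throughout the range, and combining with the lower bound from Lemma \ref{lem43} yields the asserted two-sided bound $2d-2<q(B_{n,d})<2d$.

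With these bounds in hand the chain is immediate. For consecutive clique numbers $d$ and $d+1$, the upper bound at $d$ reads $q(B_{n,d})<2d$, while the lower bound at $d+1$ reads $q(B_{n,d+1})>2(d+1)-2=2d$; thus the even integer $2d$ cleanly separates the two values, $q(B_{n,d})<2d<q(B_{n,d+1})$. Letting $d$ run from $2$ up to $n-1$ threads each $q(B_{n,d})$ strictly between the consecutive even integers $2d-2$ and $2d$, producing the increasing sequence $q(B_{n,2})<4<q(B_{n,3})<6<\cdots<2n-4<q(B_{n,n-1})$. Finally I would attach the two endpoints via Corollary \ref{cor26}: at the bottom $q(\overrightarrow{C_n})=2<q(B_{n,2})$ (using $q(B_{n,2})>2$), and at the top $q(B_{n,n-1})<2n-2=q(\overset{\longleftrightarrow}{K_n})$, which completes the statement. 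There is essentially no obstacle here; the only non-mechanical observation is that the even integers $2d$ serve as exact separators, which is precisely why Lemma \ref{lem43} is phrased with the lower bound $2d-2$.
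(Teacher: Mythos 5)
Your proof is correct and takes essentially the same route as the paper: both rest on Lemma \ref{lem43} and the observation that $\frac{3d-3+\sqrt{(d-1)^2+8}}{2}$ lies below $2d$, so that the even integers $2d$ separate consecutive values $q(B_{n,d})<2d<q(B_{n,d+1})$. If anything, your version is slightly more careful, since you verify the \emph{strict} inequality $\frac{3d-3+\sqrt{(d-1)^2+8}}{2}<2d$ (the paper writes only $\leq 2d$, which by itself would not yield the strict separation claimed).
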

\begin{proof}
Since $2d-2<\frac{3d-3+\sqrt{(d-1)^2+8}}{2}\leq 2d$, then $2d-2<q(B_{n,d})<2d<q(B_{n,d+1})<2d+2$ for $2\leq d\leq n-1$ by Lemma \ref{lem43}.
\end{proof}


\section{The minimum signless Laplacian spectral radius of digraphs with given girth}

\hskip.6cm
Let $g\geq 2$ and $\mathcal{G}_{n, g}$ denote the set of strongly connected digraph on $n$ vertices with girth $g$.
If $g=n$, then $\mathcal{G}_{n, n}=\{\overrightarrow{C_n}\}$ and $q(\overrightarrow{C_n})=2$.
Thus we only need to discuss the cases $2\leq g\leq n-1$.

 Let $2\leq g\leq n-1$ and $C_{n, g}=(V(C_{n,g}), E(C_{n,g}))$ be a digraph obtained by adding a directed path $\overrightarrow{P_{n-g+2}}=u_gu_{g+1}\cdots u_{n}u_1$ on the directed cycle $\overrightarrow{C_g}=u_1u_2\cdots u_gu_1$ such that $V(\overrightarrow{C_g})\cap V(\overrightarrow{P_{n-g+2}})=\{u_1, u_{g}\}$ (as shown in Fig.3), where $V(C_{n,g})=\{u_1, u_2, \cdots, u_n\}$, and $E(C_{n, g})=\{(u_i,u_{i+1}), 1\leq i\leq n-1\}\cup\{(u_g, u_1), (u_n, u_1)\}$.
 Clearly, $C_{n, g}\in \mathcal{G}_{n, g}$. In the rest of this section, we will show that $C_{n, g}$ achieves the minimum signless Laplacian spectral radius among all digraphs in $\mathcal{G}_{n, g}$. We also determine the digraphs which achieve the second, the third and the fourth minimum signless Laplacian spectral radius among all digraphs.
 \vskip0.25cm
$
\hskip3cm
 \xy 0;/r3pc/: \POS (4,1) *\xycircle<3pc,3pc>{};
        \POS(3.2,1.6)  *@{*}*+!L{u_{n}}="j";
          \POS(4.8,1.6)  *@{*}*+!L{u_{g+1}}="c";
        \POS(4.75,.32)   *@{*}*+!L{u_{g-1}}="d";
         \POS (3.015, .9) *@{*}*+!R{u_{1}}="g";
         \POS(5.0,.9)  *@{*}*+!L{u_g}="h";
        \POS(4.99,1.1) \ar@{->}(4.98,1.25);(4.99,1.1)="i";
       \POS(3,1.1) \ar@{->}(3,1.1);(3.01,1.15); 
        \POS(3.5,1.86)   \ar@{->}(3.5,1.86);(3.6,1.91)="b";
        \POS(3.5,.145)    \ar@{->}(3.5,.145) ;(3.49,.151) ="e";
        \POS(4.94,.6)   \ar@{->}(4.94,.6);(4.93,.585)="f";
        \POS "h" \ar @{->} (3.8,.9) \ar @{-} "g";
        \POS (4.1, .1) *@{}*+!D{\overrightarrow{C_g}};
         \POS (4, -0.25) *@{}*+!D{\cdots};
        \POS (4.1, 1.1) *@{}*+!D{\overrightarrow{P_{n-g+2}}};
        \POS (4, 1.9) *@{}*+!D{\cdots};
 \endxy
        \hskip0.2cm
    \hskip1cm
\xy 0;/r3pc/: \POS (1,1) *\xycircle<2pc,2pc>{}; \POS (2.33,1)*\xycircle<2pc,2pc>{}
        \POS(1.665,1) *@{*}*+!R{u_g}="n";
        \POS(1.28,0.38) *@{*}*+!U{u_{1}}="k";
        \POS(0.8,0.8) *@{}*+!D{\overrightarrow{C_{g}}};
        \POS(0.22,0.8) *@{}*+!D{\vdots};
        \POS(1.6,1.3) \ar@{->}(1.6,1.3);(1.55,1.37)="a";
        \POS(1.45,0.5) \ar@{->}(1.45,0.5);(1.5,0.55);
        \POS(0.33,0.9)   \ar@{->}(0.33,1);(0.34,0.85);
        \POS(2.1,1.6)   *@{*}*+!D{u_{n}}="d";
        \POS(1.8,.58)    \ar@{->}(1.8,.58) ;(1.79,.6) ="e";
        \POS (2.33, .8) *@{}*+!D{\overrightarrow{C_{n-g+1}}};
         \POS(2.0,.4)  *@{*}*+!U{u_{g+1}}="h";
        \POS(2.98,0.9)   \ar@{->}(2.98,1.15);(2.99,1)="j";
        \POS(1.78,1.37) \ar@{->}(1.78,1.37);(1.79,1.385)="i";
        \POS (3.1, .8) *@{}*+!D{\vdots};
\endxy
$

\vskip0.00001mm
\hskip4.8cm $C_{n,g}$ \hskip4cm  $C'_{n,g}$
\vskip0.1mm
\hskip5cm  Fig.3. \hskip.1cm  The digraphs $C_{n,g}$ and $C'_{n,g}$.

\begin{lem}\label{lem51}
Let $2\leq g\leq n-1$, $C'_{n,g}=C_{n, g}-\{(u_{n},u_1)\}+\{(u_{n}, u_g)\}$ (as shown in Fig.3) and
$x=(x_1, x_2, \ldots, x_n)^T$ be the unique positive unit eigenvector corresponding to $q(C_{n,g})$,
while $x_i$ corresponds to the vetex $u_i$. Then

{\rm (1) } $x_{g+1}<x_{g+2}<\cdots <x_n<x_1<x_2<\cdots<x_g$;

{\rm (2) } $q(C'_{n, g})>q(C_{n, g})$.
\end{lem}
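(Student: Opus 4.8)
The plan is to read the eigenvector equations of $Q(C_{n,g})$ off vertex by vertex and then invoke the arc-moving transformation of Theorem \ref{theo32}. Throughout, $x=(x_1,\ldots,x_n)^T$ is the positive unit eigenvector for $q:=q(C_{n,g})$.

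For part (1), I first note that $C_{n,g}$ contains $\overrightarrow{C_g}$ as a proper subdigraph when $g\le n-1$, so Corollary \ref{cor25} gives $q>q(\overrightarrow{C_g})=2$; in particular $q-1>1$. From the structure of $C_{n,g}$ the out-degrees are $d^+_{u_g}=2$ and $d^+_{u_i}=1$ for every $i\neq g$. Writing $(Q(C_{n,g})x)_i=qx_i$ at each vertex of out-degree one yields the two recursions $x_{i+1}=(q-1)x_i$ for $1\le i\le g-1$ and for $g+1\le i\le n-1$, together with the closing relation $x_1=(q-1)x_n$ coming from the arc $(u_n,u_1)$. Since $q-1>1$, each identity forces the higher-indexed coordinate to strictly exceed the lower: the first recursion gives $x_1<x_2<\cdots<x_g$, the second gives $x_{g+1}<x_{g+2}<\cdots<x_n$, and the closing relation gives $x_n<x_1$. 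Concatenating these chains produces exactly $x_{g+1}<x_{g+2}<\cdots<x_n<x_1<x_2<\cdots<x_g$, which is (1). (Equivalently, the two increasing chains follow from Lemma \ref{lem35} applied to the directed paths $u_gu_{g+1}\cdots u_nu_1$ and $u_nu_1u_2\cdots u_g$, whose interior vertices all have out-degree one.)

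For part (2), I apply Theorem \ref{theo32} to $D=C_{n,g}$ with the roles $u=u_n$, $v=u_1$, $w=u_g$, so that $H=D-\{(u_n,u_1)\}+\{(u_n,u_g)\}=C'_{n,g}$. The theorem requires that $H$ be strongly connected and that $x_w>x_v$, i.e. $x_g>x_1$. The inequality $x_g>x_1$ is precisely the right-hand end of the chain in (1), and it holds already for $g=2$ since then $x_1<x_2=x_g$. Strong connectivity of $C'_{n,g}$ follows because its arc set contains the two cycles $u_1u_2\cdots u_gu_1$ (closed by $(u_g,u_1)$) and $u_gu_{g+1}\cdots u_nu_g$ (closed by the new arc $(u_n,u_g)$), which share the vertex $u_g$ and hence cover all of $V$ in a single strongly connected block. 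Theorem \ref{theo32} then gives $q(C'_{n,g})>q(C_{n,g})$, proving (2).

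The computations are routine; the only points demanding care are bookkeeping the out-degree $2$ of $u_g$ correctly, so that its eigenvalue equation is \emph{not} used in the recursions and $u_g$ serves instead as the common top endpoint of the two increasing chains, and verifying that $C'_{n,g}$ remains strongly connected, which is exactly what licenses the strict version of Theorem \ref{theo32}. No genuine obstacle arises beyond these structural checks.
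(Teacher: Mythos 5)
Your proof is correct and takes essentially the same approach as the paper: your explicit recursions $x_{i+1}=(q-1)x_i$ (valid since $q>2$) are just an unfolding of Lemma \ref{lem35}, which the paper applies to the directed paths $u_{g+1}u_{g+2}\cdots u_nu_1\cdots u_g$ and $u_gu_{g+1}u_{g+2}$ to get part (1), and part (2) is the identical application of Theorem \ref{theo32} with $u=u_n$, $v=u_1$, $w=u_g$ and $x_1<x_g$. Your explicit verification that $C'_{n,g}$ is strongly connected is a detail the paper leaves implicit, but otherwise the arguments coincide.
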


\begin{proof}
Since $\overrightarrow{Q}=u_{g+1}u_{g+2}\cdots u_{n}u_1\cdots u_g$ and $\overrightarrow{R}=u_{g}u_{g+1}u_{g+2}$ are  directed paths of $C_{n,g}$
with $d^+_{u_i}=1$  where $i\in \{1,2, \ldots, g-1, g+1, \ldots, n\}$, then by Lemma \ref{lem35}, we have
$x_{g+2}<\cdots <x_{n}<x_1<\cdots<x_g$ and $x_{g+1}<x_{g+2}$, thus $x_{g+1}<x_{g+2}<\cdots <x_n<x_1<x_2<\cdots<x_g$.

Since $C'_{n,g}$ is strongly connected and $x_1<x_g$, then $q(C'_{n, g})>q(C_{n, g})$ by Theorem \ref{theo32}.
\end{proof}

\begin{theo}\label{theo52}
Let $2\leq g\leq n-1$ and $D\in\mathcal{G}_{n, g}$ be a digraph. Then $q(D)\geq q(C_{n, g})>2$, with equality if and only if $D\cong C_{n, g}$.
\end{theo}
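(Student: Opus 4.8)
The plan is to run the same reduce--then--expand scheme that proved Theorem \ref{theo42}, with the shortest cycle $\overrightarrow{C_g}$ playing the role of the clique and with Lemma \ref{lem51} replacing Lemma \ref{lem41}. The strict part $q(C_{n,g})>2$ is immediate: since $2\le g\le n-1$, the cycle $\overrightarrow{C_g}$ is a proper subdigraph of $C_{n,g}$, so $q(C_{n,g})>q(\overrightarrow{C_g})=2$ by Corollary \ref{cor25}. For the bound $q(D)\ge q(C_{n,g})$, fix $D\in\mathcal{G}_{n,g}$ and a shortest cycle $\overrightarrow{C_g}$ of it. Because $D$ is strongly connected and $n>g$, $D$ properly contains $\overrightarrow{C_g}$, so an ear decomposition starting from $\overrightarrow{C_g}$ supplies at least one ear, a directed path whose two endpoints lie on $\overrightarrow{C_g}$ and whose internal vertices are off the cycle. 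Discarding every other vertex and arc leaves a strongly connected subdigraph $H=\overrightarrow{C_g}\cup(\text{ear})$; by Corollary \ref{cor25}, $q(H)\le q(D)$, strictly whenever $H$ is a proper subdigraph of $D$ (in particular whenever $|V(H)|<n$).

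I would then classify $H$ by the ear. Write the ear as a path from $u_a$ to $u_b$ and set $\alpha=(b-a)\bmod g$, the forward cycle-gap of its endpoints (a chord being an ear of length $1$); after relabelling the cycle I may assume the ear leaves $u_g$. If $\alpha=0$ (i.e.\ $a=b$), the ear is a cycle attached at a single vertex, of length at least $g$ by the girth hypothesis and at most $n-g+1$; subdividing its arcs (Corollary \ref{cor37}, which does not increase $q$) turns $H$ into $C'_{n,g}$, whence $q(C_{n,g})<q(C'_{n,g})\le q(H)\le q(D)$ by Lemma \ref{lem51}. If $\alpha=1$, the ear parallels the single arc $u_g\to u_1$, so $H$ is a $C_{\cdot,g}$; subdividing its arcs produces $C_{n,g}$ and gives $q(C_{n,g})\le q(H)\le q(D)$ directly.

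The remaining case $\alpha\ge 2$ is the genuine obstacle: here $H$ is a ``theta'' (two internally disjoint directed $u_g\to u_\alpha$ paths together with the cycle return), which is neither a $C_{\cdot,g}$ nor a $C'_{\cdot,g}$ and is not a subdigraph of any $C_{m,g}$, so deletion alone cannot reach $C_{n,g}$. The key observation that unblocks it is that after subdividing the ear up to length $n-g+1$ the expanded digraph $\Theta$ is exactly $C_{n,g}$ with the terminal arc of its ear redirected, namely $\Theta=C_{n,g}-\{(u_n,u_1)\}+\{(u_n,u_\alpha)\}$. Since $\Theta$ is strongly connected and the eigenvector ordering of $C_{n,g}$ recorded in Lemma \ref{lem51}(1) gives $x_{u_1}<x_{u_2}<\cdots<x_{u_g}$, hence $x_{u_\alpha}>x_{u_1}$ for $\alpha\ge 2$, Theorem \ref{theo32} yields $q(\Theta)>q(C_{n,g})$; combined with $q(\Theta)\le q(H)\le q(D)$ this gives $q(D)>q(C_{n,g})$. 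Collecting the three cases proves $q(D)\ge q(C_{n,g})$, and for equality I would trace where strictness enters: a strict drop occurs in the cases $\alpha=0$ and $\alpha\ge 2$, and whenever $H\subsetneq D$, so $q(D)=q(C_{n,g})$ forces $\alpha=1$ together with $H=D$, i.e.\ $D\cong C_{n,g}$.

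The step I expect to be most delicate is precisely this theta case. One must verify that the expansion preserves girth $g$ (equivalently $\alpha\le n-g+1$, which holds automatically because $H\subseteq D$ has girth $g$), and that the eigenvector comparison $x_{u_\alpha}>x_{u_1}$ is read off correctly from Lemma \ref{lem51}(1). The accompanying bookkeeping --- that each reduced $H$ expands to \emph{exactly} $C_{n,g}$, $C'_{n,g}$, or such a $\Theta$, all with the matching ear length $n-g+1$ --- also requires care, but it is routine once the single-redirection identity $\Theta=C_{n,g}-\{(u_n,u_1)\}+\{(u_n,u_\alpha)\}$ is in hand.
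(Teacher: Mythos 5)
Your proposal is correct and follows essentially the same route as the paper's own proof: reduce $D$ to the shortest cycle plus one ear, expand by arc subdivision (Corollary \ref{cor37}) to $n$ vertices, and compare with $C_{n,g}$ via Lemma \ref{lem51} and Theorem \ref{theo32}; your three cases $\alpha=0$, $\alpha=1$, $\alpha\ge 2$ are exactly the paper's Case 1 ($C'_{g+l-1,g}$), Subcase 2.1 ($t=1$) and Subcase 2.2 ($2\le t\le g$), including the key redirection identity $H'\cong C_{n,g}-\{(u_n,u_1)\}+\{(u_n,u_t)\}$ and the eigenvector comparison $x_1<x_t$.
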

\begin{proof}
Since $D\in \mathcal{G}_{n, g}$, then $\overrightarrow{C_g}$ is the proper subdigraph of $D$ and thus $q(D)>2=q(C_g)$. Without loss of generality,
we let $\overrightarrow{C_g}=u_1u_2\cdots u_gu_1$. Since $D\in\mathcal{G}_{n, g}$ is strongly connected, then
delete vertices or arcs of $D$ such that the resulting subdigraph is denoted by $D_1$, while $D_1\cong C'_{g+l-1, g}$ $(l\geq g)$
or $D_1\cong H$, where $H=(V(H), E(H))$,  $V(H)=\{u_1, \ldots, u_g, u_{g+1}, \ldots, u_{g+l-2}\}$,
$E(H)=\{(u_i, u_{i+1})| i\in \{1, \ldots,  g+l-3\}\}\cup\{(u_g, u_1), (u_{g+l-2}, u_t)\}$ with $1\leq t\leq g$, $1+t\leq l$ (see Fig.4).

By Corollary \ref{cor25}, we have $q(D)\geq q(D_1)$ with equality if and only if $D\cong D_1$.

 \vskip0.25cm
$
\hskip3cm
 \xy 0;/r3pc/: \POS (4,1) *\xycircle<3pc,3pc>{};
        \POS(3.2,1.6)  *@{*}*+!R{u_{g+l-2}}="c";
         \POS (3.01, .9) *@{*}*+!R{u_{t}}="g";
         \POS (4.3, .9) *@{*}*+!D{u_{1}};
         \POS(5.0,.9)  *@{*}*+!L{u_g}="h";
         \POS(4.9,1.4)  *@{*}*+!L{u_{g+1}};
             \POS(4.5,.9) \ar@{->}(4.5,.9);(4.504,.9); 
             \POS(3,1.1) \ar@{->}(3,1.1);(3.01,1.15)="i";
        \POS(4.5,1.85)   \ar@{->}(4.5,1.85);(4.6,1.8)="b";
        \POS(4.94,.6)   \ar@{->}(4.94,.6);(4.93,.585)="f";
        \POS(4.3,.06)   \ar@{->}(4.3,.06);(4.24,.038);
        \POS "h" \ar@{-} "g";
        \POS(3.8,.9) \ar@{->}(3.8,.9);(3.76,.9)
        \POS (4.1, .08) *@{}*+!D{\overrightarrow{C_g}};
        \POS (4, 1.3) *@{}*+!D{\overrightarrow{P_{l}}};
        \POS(4.99,1.1) \ar@{->}(4.98,1.25);(4.99,1.1); 
        \POS (4, 1.9) *@{}*+!D{\cdots};
        \POS(2.9,1) *@{}*+!D{\vdots};
        \POS (4, -0.25) *@{}*+!D{\cdots};
        \POS (3.6, .85) *@{}*+!D{\cdots};

 \endxy
        \hskip0.2cm
    \hskip1cm
\xy 0;/r3pc/: \POS (1,1) *\xycircle<2pc,2pc>{}; \POS (2.33,1)*\xycircle<2pc,2pc>{}
        \POS(1.665,1) *@{*}*+!R{u_g}="n";
        \POS(1.28,0.38) *@{*}*+!U{u_{1}}="k";
        \POS(0.8,0.8) *@{}*+!D{\overrightarrow{C_{g}}};
        \POS(1.6,1.3) \ar@{->}(1.6,1.3);(1.55,1.37)="a";
        \POS(1.45,0.5) \ar@{->}(1.45,0.5);(1.5,0.55);
        \POS(0.33,0.9)   \ar@{->}(0.33,1);(0.34,0.85);
        \POS(2.1,1.6)   *@{*}*+!D{u_{g+l-1}}="d";
        \POS(1.8,.58)    \ar@{->}(1.8,.58) ;(1.79,.6) ="e";
        \POS (2.33, .8) *@{}*+!D{\overrightarrow{C_{l}}};
         \POS(2.0,.4)  *@{*}*+!U{u_{g+1}}="h";
        \POS(2.98,0.9)   \ar@{->}(2.98,1.15);(2.99,1)="j";
        \POS(1.78,1.37) \ar@{->}(1.78,1.37);(1.79,1.385)="i";
        \POS(0.22,0.8) *@{}*+!D{\vdots};
        \POS (3.1, .8) *@{}*+!D{\vdots};
\endxy
$

\vskip0.00001mm
\hskip4.8cm $H$ \hskip4cm  $C'_{g+l-1,g}$
\vskip0.1mm
\hskip5cm  Fig.4. \hskip.1cm  The digraphs $H$ and $C'_{g+l-1,g}$.

{\bf Case 1:  } $D_1\cong C'_{g+l-1, g}$ where  $l\geq g$.

Insert $n-g-l+1$ vertices into $\overrightarrow{C_l}$ such that the resulting digraph is $C'_{n, g}$.
By using Corollary \ref{cor37} $n-g-l+1$ times, we have $q(D_1)\geq q(C'_{n, g})$,
 and thus $q(D)\geq q(D_1)\geq q(C'_{n, g})>q(C_{n, g})$ by Lemma \ref{lem51}.

{\bf Case 2: }  $D_1\cong H$.

 Insert $n-g-l+2$ vertices to $\overrightarrow{P_l}$ such that the resulting digraph, denoted by $H'$.
 Clearly, $H^{\prime}$ is strongly connected. By using Corollary \ref{cor37} $n-g-l+2$ times,
 we have $q(H)\geq q(H^{\prime})$ with equality if and only if $H\cong H^{\prime}$.

{\bf Subcase 2.1: } $t=1$.

In this case, $H^{\prime}\cong C_{n,g}$, then $q(D)\geq q(D_1)=q(H)\geq q(H^{\prime})=q(C_{n,g})$,
with equality if and only if $D\cong C_{n, g}$.

 {\bf Subcase 2.2: } $2\leq t\leq g$.

Note that $H^{\prime}\cong C_{n,g}-\{(u_n, u_1)\}+\{(u_n, u_t)\}$,
By Lemma \ref{lem51}, we have $x_1<x_t$,  and then $q(C_{n, g})<q(H^{\prime})$ by Theorem \ref{theo32}.
Thus  $q(D)\geq q(D_1)=q(H)\geq q(H^{\prime})>q(C_{n,g})$.

Combining the above arguments,  $q(D)\geq q(C_{n, g})$ with equality if and only if $D\cong C_{n, g}$.
\end{proof}

\begin{theo}\label{theo53} Let $n\geq 4$. Then 
$2=q(\overrightarrow{C_n})<q(C_{n,n-1})<q(C_{n,n-2})<\cdots<q(C_{n,2})<3$.
\end{theo}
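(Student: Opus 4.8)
The plan is to collapse the entire chain into the study of a single one‑parameter family of characteristic equations, one for each girth $g$, and to extract both the two‑sided bound $2<q(C_{n,g})<3$ and the monotonicity in $g$ from elementary sign comparisons. First I would set up the Perron eigenvalue equation. Let $x=(x_1,\dots,x_n)^T>0$ be the positive unit eigenvector for $q:=q(C_{n,g})$, with $x_i$ attached to $u_i$, and put $\theta=q-1$. In $C_{n,g}$ every vertex has outdegree $1$ except $u_g$, whose two out‑arcs go to $u_1$ and to $u_{g+1}$. Reading $Q(C_{n,g})x=qx$ along the two directed paths $u_1\to\cdots\to u_g$ and $u_{g+1}\to\cdots\to u_n\to u_1$ gives the scalar recursions $x_{i+1}=(q-1)x_i=\theta x_i$ at the outdegree‑$1$ vertices, so $x_g=\theta^{\,g-1}x_1$ and $x_{g+1}=\theta^{\,g-n}x_1$. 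Substituting these into the one remaining balance, the equation at $u_g$, namely $x_1+x_{g+1}=(q-2)x_g=(\theta-1)x_g$, and clearing denominators yields
\[
f_g(\theta):=\theta^{\,n}-\theta^{\,n-1}-\theta^{\,n-g}-1=0 .
\]
Because $x_1>0$, this relation is genuinely forced by the Perron vector, so $\theta=q-1$ is a root of $f_g$.

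Next I would localise that root. For $\theta>0$ the equation $f_g(\theta)=0$ is equivalent (divide by $\theta^{\,n-1}$) to $\psi_g(\theta)=0$, where $\psi_g(\theta)=\theta-1-\theta^{\,1-g}-\theta^{\,1-n}$. A one‑line derivative computation, $\psi_g'(\theta)=1+(g-1)\theta^{-g}+(n-1)\theta^{-n}>0$, shows $\psi_g$ is strictly increasing on $(0,\infty)$, so $f_g$ has a \emph{unique} positive root $\theta_g$. Since $\overrightarrow{C_g}$ is a proper subdigraph of $C_{n,g}$, Corollary~\ref{cor25} gives $q>2$, hence $\theta_g=q-1>1>0$, so $\theta_g$ is exactly this unique positive root. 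Evaluating $\psi_g(1)=-2<0$ and, using $g\ge2$ and $n\ge4$, $\psi_g(2)=1-2^{\,1-g}-2^{\,1-n}\ge 1-\tfrac12-\tfrac18>0$ pins $\theta_g\in(1,2)$, i.e. $2<q(C_{n,g})<3$ for every $2\le g\le n-1$. This already delivers the two outermost inequalities $2=q(\overrightarrow{C_n})<q(C_{n,n-1})$ and $q(C_{n,2})<3$.

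Finally I would establish the monotonicity by comparing consecutive parameters. A direct subtraction gives $\psi_{g+1}(\theta)-\psi_g(\theta)=\theta^{-g}(\theta-1)$, which is strictly positive at $\theta=\theta_g>1$. Since $\psi_g(\theta_g)=0$, this means $\psi_{g+1}(\theta_g)>0=\psi_{g+1}(\theta_{g+1})$, and as $\psi_{g+1}$ is strictly increasing we conclude $\theta_{g+1}<\theta_g$, that is $q(C_{n,g+1})<q(C_{n,g})$. Letting $g$ run from $2$ up to $n-1$ chains these into $q(C_{n,n-1})<\cdots<q(C_{n,2})$, which together with the bounds of the previous step is exactly the asserted string.

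I expect the only genuinely delicate point to be the reduction in the first step: I must be sure the derived polynomial identity is \emph{forced} by the Perron eigenvector rather than being one of several spurious eigenvalue relations, which is why I track the single non‑trivial balance at the outdegree‑$2$ vertex $u_g$ and insist on $x_1>0$. After that reduction everything rests on the strict monotonicity of $\psi_g$ plus two sign evaluations, all routine. As an alternative I could prove $q(C_{n,g+1})<q(C_{n,g})$ directly by an eigenvector perturbation together with Lemma~\ref{lem31}, but the scalar recursion above makes the comparison transparent and simultaneously yields the bounds $2<q<3$, so I would favour the characteristic‑equation route.
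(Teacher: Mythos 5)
Your proposal is correct, but it takes a genuinely different route from the paper. The paper obtains the upper bound $q(C_{n,g})<3$ by applying its degree--sequence bound (Theorem \ref{theo27} with outdegree sequence $2,1,\dots,1$, where $\phi_2=3$ and the equality characterization fails), and it obtains the monotonicity $q(C_{n,g+1})<q(C_{n,g})$ by a rewiring argument: $C_{n,g}\cong C_{n,g+1}-\{(u_{g+1},u_1)\}+\{(u_{g+1},u_2)\}$, combined with the Perron-component comparison $x_1<x_2$ (via Lemma \ref{lem35}, as in Lemma \ref{lem51}) and Theorem \ref{theo32}. You instead reduce everything to the explicit relation $\theta^n-\theta^{n-1}-\theta^{n-g}-1=0$ for $\theta=q-1$, forced by chaining the scalar recursions $x_{i+1}=(q-1)x_i$ at the outdegree-one vertices into the single balance at $u_g$ (legitimate since Corollary \ref{cor25} gives $q>2$, so $\theta>1>0$ and $x_1>0$), and then extract both the localization $\theta_g\in(1,2)$ and the comparison $\theta_{g+1}<\theta_g$ from the strict monotonicity of $\psi_g(\theta)=\theta-1-\theta^{1-g}-\theta^{1-n}$ together with the sign of $\psi_{g+1}(\theta)-\psi_g(\theta)=\theta^{-g}(\theta-1)$; all of these computations check out. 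Your route is more self-contained and elementary (it needs only Corollary \ref{cor25} and Perron--Frobenius, not Theorems \ref{theo27} and \ref{theo32}), pins $q(C_{n,g})$ down as the unique root in $(2,3)$ of an explicit polynomial, and transfers essentially verbatim to the adjacency spectral radius, giving Theorem \ref{theo72} for free. What the paper's route buys is reuse of machinery that serves its other results (Theorems \ref{theo42}, \ref{theo52}, \ref{theo54}) and a transformation technique that still works when explicit characteristic equations are intractable.
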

\begin{proof}
It is clear $ q(\overrightarrow{C_n})=2$ and $q(C_{n,n-1})>2$.

For any $g$ with $2\leq g\leq n-1$, note that the outdegree of $C_{n, g}$ is $d^+_1=2, d^+_2=d^+_3=\cdots=d^+_{n}=1$,
then $q(C_{n,g})<3$ by Theorem \ref{theo27}.

Now  we only need to show that $q(C_{n,g+1})<q(C_{n,g})$ for $2\leq g\leq n-2$.

Let $x=(x_1, x_2, \ldots, x_n)^T$ be the unique positive unit eigenvector corresponding to $q(C_{n,g+1})$,
while $x_i$ corresponds to the vertex $u_i$. Clearly, $C_{n,g}\cong C_{n,g+1}-\{(u_{g+1}, u_{1})\}+\{(u_{g+1}, u_{2})\}$.
Similar to the proof of lemma \ref{lem51}, we have $x_1<x_2$, then $q(C_{n,g+1})<q(C_{n,g})$  by Theorem \ref{theo32}.
\end{proof}

In \cite{2012LAA}, the authors defined  $\theta$-digraph as follows. The $\theta$-digraph consists of three directed paths $P_{a+2}$, $P_{b+2}$ and $P_{c+2}$ such that the initial vertex of $P_{a+2}$ and $P_{b+2}$ is the terminal vertex of $P_{c+2}$, and the initial vertex of $P_{c+2}$ is the terminal vertex of $P_{a+2}$ and $P_{b+2}$, denoted by $\theta(a,b,c)$. Clearly, $C_{n,g}\cong \theta(0,n-g,g-2)$, and $H\cong\theta(t-1, l-2, g-t-1)$ where $H$ defined in the proof of Theorem \ref{theo52}.

Let $\theta(1,1,n-4), \widehat{\theta}=\theta(1,1,n-4)\cup\{(u_2, u_3)\}, \theta(0,2,n-4)$ as shown in Fig.5. By Corollary \ref{cor26},
we know that $\overrightarrow{C_n}$ is the digraph which achieve the minimum signless Laplacian spectral radius
among all strongly connected digraphs on $n\geq 4$ vertices.
Now we will show $\theta(0,1,n-3)$ (that is, $C_{n,n-1}$), $\theta(1,1,n-4) $, $\theta(0,2,n-4)$ (that is, $C_{n,n-2}$) are the digraphs which achieve the second,
the third and the fourth minimum signless Laplacian spectral radius among all strongly connected digraphs on $n\geq 4$ vertices, respectively.

 \vskip0.2cm
$
\hskip1cm
 \xy 0;/r3pc/: \POS (4,1) *\xycircle<3pc,3pc>{};
        \POS(4.1,2)  *@{*}*+!D{u_2}="c";
         \POS (3.01, .9) *@{*}*+!R{u_{4}}="g";
         \POS (4.3, .9) *@{*}*+!D{u_3};
         \POS(5.0,.9)  *@{*}*+!L{u_1}="h";
          \POS(4.5,.9) \ar@{->}(4.5,.9);(4.504,.9);
        \POS(3.25, 1.65) \ar@{->}(3.25, 1.65);(3.2, 1.58)
        \POS(4.8,1.57)   \ar@{->}(4.8,1.57);(4.9,1.44);
        \POS(4.94,.6)   \ar@{->}(4.94,.6);(4.93,.585);
                \POS(4.3,.06)   \ar@{->}(4.3,.06);(4.24,.038);
        \POS "h" \ar@{-} "g";
        \POS(3.8,.9) \ar@{->}(3.8,.9);(3.76,.9)
        \POS (4, -0.25) *@{}*+!D{\cdots};
        \POS(3.2,.4)    \ar@{->}(3.2,.4) ;(3.19,.415) ;
        \POS (3.4, .2) *@{*}*+!R{u_5};
        \POS (4.6, .2) *@{*}*+!L{u_n};
 \endxy
 \hskip1.2cm
 \xy 0;/r3pc/: \POS (4,1) *\xycircle<3pc,3pc>{};
        \POS(4.1,2)  *@{*}*+!D{u_3}="c";
         \POS (3.15, 1.5) *@{*}*+!R{u_{4}}="d";
         \POS (4.85, 1.5) *@{*}*+!L{u_2}="b";
         \POS(5.0,.9)  *@{*}*+!L{u_1}="a";
       \POS(3.5,1.86)   \ar@{->}(3.5,1.86);(3.6,1.91);
        \POS(4.99,1.1) \ar@{->}(4.98,1.25);(4.99,1.1);
         \POS(4.5,1.85)   \ar@{->}(4.5,1.85);(4.6,1.8);
        \POS(4.94,.6)   \ar@{->}(4.94,.6);(4.93,.585);
        \POS(4.3,.06)   \ar@{->}(4.3,.06);(4.24,.038);
        \POS "a" \ar@{-} "c";  \POS "b" \ar@{-} "d";
           \POS(3.8,1.5) \ar@{->}(3.8,1.5);(3.76,1.5)
           \POS(3,1.1) \ar@{->}(3,1.1);(3,1)
           \POS(4.75,1.2)   \ar@{->}(4.75,1.2);(4.7,1.25)
        \POS (4, -0.25) *@{}*+!D{\cdots};
        \POS(3.2,.4)    \ar@{->}(3.2,.4) ;(3.19,.415) ;
        \POS (3, .9) *@{*}*+!R{u_5};
        \POS (4.6, .2) *@{*}*+!L{u_n};
 \endxy
 \hskip1.2cm
 \xy 0;/r3pc/: \POS (4,1) *\xycircle<3pc,3pc>{};
        \POS(3.4,1.8)  *@{*}*+!D{u_3}="c";
         \POS (3.01, .9) *@{*}*+!R{u_4}="g";
         \POS (4.3, 1.95) *@{*}*+!D{u_2};
         \POS(5.0,.9)  *@{*}*+!L{u_1}="h";
              \POS(3.8, 1.98) \ar@{->}(3.8, 1.98);(3.75, 1.97)
        \POS(3.07, 1.35) \ar@{->}(3.07, 1.35);(3.04, 1.298)
          \POS(4.8,1.57)   \ar@{->}(4.8,1.57);(4.9,1.44);
        \POS(4.94,.6)   \ar@{->}(4.94,.6);(4.93,.585);
                \POS(4.3,.06)   \ar@{->}(4.3,.06);(4.24,.038);
        \POS "h" \ar@{-} "g";
             \POS(3.8,.9) \ar@{->}(3.8,.9);(3.76,.9)
        \POS (4, -0.25) *@{}*+!D{\cdots};
        \POS(3.2,.4)    \ar@{->}(3.2,.4) ;(3.19,.415) ;
        \POS (3.4, .2) *@{*}*+!R{u_5};
        \POS (4.6, .2) *@{*}*+!L{u_n};
 \endxy
$
\vskip0.00001mm
\hskip1.7cm $\theta(1,1,n-4)$ \hskip3.4cm $\widehat{\theta}$ \hskip3.7cm  $\theta(0,2,n-4)$
\vskip0.1mm
\hskip2.5cm  Fig.5. \hskip.1cm  The digraphs $\theta(1,1,n-4)$, $\widehat{\theta}$,  and $\theta(0,2,n-4)$.

\begin{theo}\label{theo54}
Let  $n\geq 4$. Then $\theta(0,1,n-3)$, $\theta(1,1,n-4)$, $\theta(0,2,n-4)$ are the digraphs which achieve the second,
the third and the fourth minimum signless Laplacian spectral radius among all strongly connected digraphs on $n$ vertices,
respectively.
\end{theo}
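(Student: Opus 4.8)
The plan is to stratify all strongly connected digraphs on $n$ vertices by girth. The three candidates fit naturally into this scheme, since $\theta(0,1,n-3)\cong C_{n,n-1}$ and $\theta(0,2,n-4)\cong C_{n,n-2}$ both have girth $n-1$ and $n-2$ respectively, while $\theta(1,1,n-4)$ has girth $n-1$. By Corollary \ref{cor26} the unique minimum is $\overrightarrow{C_n}$ (girth $n$), so it remains to examine girth $n-1$ and girth at most $n-2$. The second stratum is immediate: by Theorem \ref{theo52}, any girth-$g$ digraph $D$ satisfies $q(D)\geq q(C_{n,g})$ with equality iff $D\cong C_{n,g}$, and by Theorem \ref{theo53} one has $q(C_{n,g})\geq q(C_{n,n-2})$ for all $g\leq n-2$ with equality iff $g=n-2$; hence every digraph of girth at most $n-2$ other than $C_{n,n-2}=\theta(0,2,n-4)$ strictly exceeds $q(\theta(0,2,n-4))$, and $\theta(0,2,n-4)$ is the unique such digraph attaining that value.

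The heart of the argument is the complete determination of the girth-$(n-1)$ digraphs. I would first show that the $(n-1)$-cycle $\overrightarrow{C_{n-1}}$ contained in such a $D$ must be chordless: an arc $u_a\to u_b$ with $u_b\neq u_{a+1}$ closes a cycle of length $1+d(u_b,u_a)\leq n-2$, contradicting the girth. Thus every arc off the cycle is incident with the one remaining vertex $w$. Writing $I,O$ for the in- and out-neighbours of $w$, the condition that each cycle $w\to u_j\to\cdots\to u_i\to w$ of length $2+d(u_j,u_i)$ be at least $n-1$ forces $u_j\in\{u_{i+1},u_{i+2}\}$ for all $i\in I,\ j\in O$. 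A short combinatorial check on these constraints yields $|I|+|O|\leq 3$, so $D$ has $n+1$ or $n+2$ arcs, and up to rotation there are exactly four such digraphs: $\theta(0,1,n-3)$ and $\theta(1,1,n-4)$ (the cases $|I|=|O|=1$ with $O=\{u_{i+1}\}$ or $O=\{u_{i+2}\}$), together with two $(n+2)$-arc digraphs $D_a$ ($|I|=1,|O|=2$) and $D_b$ ($|I|=2,|O|=1$).

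With this finite list, I would compute each characteristic equation from the positive eigenvector by propagating $x_{v'}=(q-1)x_v$ along the out-degree-one stretches and closing with the branch relations $(q-2)x_u=\sum_{u\to v}x_v$. This gives $(q-2)(q-1)^{n-1}=q$ for $\theta(0,1,n-3)$, $(q-2)(q-1)^{n-2}=2$ for $\theta(1,1,n-4)$, $(q-2)(q-1)^{n-1}=(q-1)^2+1$ for $\theta(0,2,n-4)$, and $(q-2)^2(q-1)^{n-3}=2$ for both $D_a$ and $D_b$. Because each spectral radius is the largest real root of its defining function, beyond which the function is positive, the ordering follows by evaluating one function at a competitor's root. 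Writing $q_0,q_1,q_2$ for the roots of the first three equations, one finds $F(q_0)=(2-q_0)/(q_0-1)<0$ (using $(q_0-2)(q_0-1)^{n-1}=q_0$) so $q_0<q_1$; and the clean identity $G(q_1)=2(q_1-1)-(q_1-1)^2-1=-(q_1-2)^2<0$ gives $q_1<q_2$. Finally, substituting $(q_2-2)(q_2-1)^{n-3}=1+1/(q_2-1)^2$ into $(q-2)^2(q-1)^{n-3}-2$ shows this is negative at $q_2$, whence $q_2<q(D_a)=q(D_b)$. Thus among girth-$(n-1)$ digraphs only $\theta(0,1,n-3)$ and $\theta(1,1,n-4)$ lie below $q(\theta(0,2,n-4))$, and assembling the strata produces the chain $2<q(\theta(0,1,n-3))<q(\theta(1,1,n-4))<q(\theta(0,2,n-4))$ with each value attained by a unique digraph.

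I expect the main obstacle to be the structural classification of the girth-$(n-1)$ digraphs, specifically establishing that $\overrightarrow{C_{n-1}}$ is chordless and that $w$ has degree at most three, so that the candidate set is exactly the four digraphs above and nothing of larger arc count can intrude below $q(\theta(0,2,n-4))$. The low cases $n=4,5$, where the distance bound $d(u_j,u_i)\geq n-3$ loosens, deserve a direct check against the displayed characteristic equations, but the enumeration itself remains uniform. By contrast, the eigenvector computations and the sign-substitution comparisons are routine once the finite list is secured.
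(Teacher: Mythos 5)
Your proposal is correct and takes essentially the same route as the paper: stratify by girth using Theorems \ref{theo52}--\ref{theo53}, reduce to the finitely many girth-$(n-1)$ digraphs, and order the candidates through their characteristic equations (your equations agree with the paper's characteristic polynomials, and your root-substitution arguments are equivalent to the paper's positivity of polynomial differences for $x>2$; note only that your final sign claim for $(q-2)^2(q-1)^{n-3}-2$ at $q_2$ needs an upper bound such as $q_2<3$, which is available from Theorem \ref{theo53}). The one genuine addition is that you prove the classification of the girth-$(n-1)$ stratum, which the paper simply asserts as $\mathcal{G}_{n,n-1}=\{\theta(0,1,n-3),\theta(1,1,n-4),\widehat{\theta}\}$; in fact your $D_a$ and $D_b$ are both isomorphic to $\widehat{\theta}$ (reversing all arcs of $\widehat{\theta}$ gives a digraph isomorphic to itself), so your four rotation-classes collapse to the paper's three isomorphism classes and your identical characteristic equation for $D_a$ and $D_b$ is explained.
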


\begin{proof}
By $\mathcal{G}_{n, n}=\{\overrightarrow{C_n}\}$ and Theorems \ref{theo52}$\sim$\ref{theo53}, it is clear that $C_{n,n-1}\cong \theta(0,1,n-3)$ is the unique digraph with the second minimum signless Laplacian spectral radius.

Note that $\mathcal{G}_{n, n-1}=\{\theta(0,1,n-3), \theta(1,1,n-4), \widehat{\theta}\}$ and
$2=q(\overrightarrow{C_n})<q(\theta(0,1,n-3))<\min \{q(\theta(1,1,n-4)), q(\widehat{\theta}), q(\theta(0,2,n-4))\}$,
we only need to show that $q(\theta(1,1,n-4))<q(\theta(0,2,n-4))<q(\widehat{\theta})$ by Theorems \ref{theo52}$\sim$\ref{theo53}.

Let $P_{\theta(1,1,n-4)}(x)$, $P_{\theta(0,2,n-4)}(x)$,  $P_{\widehat{\theta}}(x)$ be the signless Laplacian characteristic polynomial of
$\theta(1,1,n-4), \theta(0,2,n-4)$ and $\widehat{\theta}$, respectively. By directly calculating, we have

\vskip2mm
\noindent
$P_{\theta(1,1,n-4)}(x)=\left|\begin{array}{cccccccc}
            x-2 & -1 & -1 & 0 & 0 &  0 & \cdots & 0 \\
            0 & x-1 & 0 & -1 & 0  &  0 & \cdots & 0 \\
            0 & 0 & x-1 & -1 & 0 &  0 & \cdots & 0 \\
            0 & 0 & 0 & x-1 & -1 & 0 & \cdots &0 \\
            0 & 0 & 0 & 0 & x-1 & -1 & \cdots &0 \\
            \vdots & \vdots & \vdots &\vdots  &\ddots  & \ddots & \ddots & \vdots  \\
            0 & 0 & 0 & 0 &\cdots & 0 & x-1 & -1  \\
            -1 & 0 & 0 & 0 & 0 & \cdots & 0 & x-1
          \end{array}\right|
$
\vskip2mm
\hskip1.8cm  $=(x-1)[(x-2)(x-1)^{n-2}-2]$,

\vskip2.4mm

\noindent$P_{\theta(0,2,n-4)}(x)=\left|\begin{array}{cccccccc}
            x-2 & -1 & 0 & -1 & 0 &  0 & \cdots & 0 \\
            0 & x-1 & -1 & 0 & 0  &  0 & \cdots & 0 \\
            0 & 0 & x-1 & -1 & 0 &  0 & \cdots & 0 \\
            0 & 0 & 0 & x-1 & -1 & 0 & \cdots &0 \\
            0 & 0 & 0 & 0 & x-1 & -1 & \cdots &0 \\
            \vdots & \vdots & \vdots &\vdots  &\ddots  & \ddots & \ddots & \vdots  \\
            0 & 0 & 0 & 0 &\cdots & 0 & x-1 & -1  \\
            -1 & 0 & 0 & 0 & 0 & \cdots & 0 & x-1
          \end{array}\right|
$
\vskip2mm
\hskip1.8cm  $=(x-1)^2[(x-2)(x-1)^{n-3}-1]-1$,

\vskip2.4mm
\noindent$P_{\widehat{\theta}}(x)=\left|\begin{array}{cccccccc}
            x-2 & -1 & -1 & 0 & 0 &  0 & \cdots & 0 \\
            0 & x-2 & -1 & -1 & 0  &  0 & \cdots & 0 \\
            0 & 0 & x-1 & -1 & 0 &  0 & \cdots & 0 \\
            0 & 0 & 0 & x-1 & -1 & 0 & \cdots &0 \\
            0 & 0 & 0 & 0 & x-1 & -1 & \cdots &0 \\
            \vdots & \vdots & \vdots &\vdots  &\ddots  & \ddots & \ddots & \vdots  \\
            0 & 0 & 0 & 0 &\cdots & 0 & x-1 & -1  \\
            -1 & 0 & 0 & 0 & 0 & \cdots & 0 & x-1
          \end{array}\right|
$
\vskip2mm
\hskip0.5cm  $=(x-1)[(x-2)^2(x-1)^{n-3}-2]$.

When $x>2$, we note that

\hskip3.5cm $P_{\theta(1,1,n-4)}(x)-P_{\widehat{\theta}}(x)=(x-1)^{n-2}(x-2)>0,$

\hskip3cm$ P_{\theta(0,2,n-4)}(x)-P_{\widehat{\theta}}(x)=(x-2)[(x-1)^{n-2}-(x-2)]>0,$

\hskip3.7cm $P_{\theta(1,1,n-4)}(x)-P_{\theta(0,2,n-4)}(x)=(x-2)^2>0.$

\noindent Then $q(\theta(1,1,n-4))<q(\theta(0,2,n-4))<q(\widehat{\theta})$.
\end{proof}

\section{The maximum signless Laplacian spectral radius of strongly connected digraph with given vertex connectivity}

\hskip.6cm
In this section, we will discuss the maximum signless Laplacian spectral radius of strongly connected digraph with given vertex connectivity, and propose some open problem.

Let $\mathcal{D}_{n, k}$ denote the set of strongly connected digraphs on $n$ vertices with vertex connectivity $\kappa(D)=k$. If $k=n-1$, then $\mathcal{D}_{n, n-1}=\{\overset{\longleftrightarrow}{K_n}\}$. 
Now we only need to discuss the cases $1\leq k\leq n-2$.

\vskip0.2cm
$
\hskip6cm
 \xy 0;/r3pc/: \POS (4,3) *\xycircle<3pc,1pc>{};  \POS (4,2) *\xycircle<3pc,1pc>{};  \POS (4,1) *\xycircle<3pc,1pc>{};
         \POS (3.5, 1) *@{*}="a"; \POS(4.5,1)  *@{*}="b";\POS(5.8,0.7) *@{}*+!D{\overset{\longleftrightarrow\quad\quad}{K_{n-m-k}}};
         \POS (3.5, 2) *@{*}="c"; \POS(4.5,2)  *@{*}="d";\POS(5.5,1.7) *@{}*+!D{\overset{\longleftrightarrow}{K_k}};
         \POS (3.5, 3) *@{*}="e"; \POS(4.5,3)  *@{*}="f"\POS(5.5,2.7) *@{}*+!D{\overset{\longleftrightarrow}{K_m}};
         \POS(4.28,1)  *@{}*+!R{\cdots};\POS(4.28,2)  *@{}*+!R{\cdots}; \POS(4.28,3)  *@{}*+!R{\cdots};
         \ar@{-}"a";"c"; \ar@{-}"a";"d"; \ar@{-}"b";"c"; \ar@{-}"b";"d"; \ar@{-}"c";"e"; \ar@{-}"c";"f"; \ar@{-}"d";"e"; \ar@{-}"d";"f";
         \ar@{-}\ar@/_{1.7pc}/ "a";"e";
         \ar@{-}\ar@/_{-1.7pc}/ "b";"f";
 \endxy
 $
\vskip0.00001mm
\hskip5cm  Fig.6. \hskip.1cm  The digraph $\overrightarrow{K}(n, k, m)$.
\vskip0.2cm

Let $D_1\bigtriangledown D_2$ denote the digraph obtained from two disjoint digraphs $D_1$, $D_2$ with vertex set $V=V(D_1)\cup V(D_2)$ and
arc set $E=E(D_1)\cup E(D_2)\cup \{(u,v), (v,u) | u\in V(D_1), v\in V(D_2)\}$.
Let $1\leq k\leq n-2$, $1\leq m\leq n-k-1$, and  $\overrightarrow{K}(n, k, m)$  denote the digraph $\overset{\longleftrightarrow}{K_k}\bigtriangledown(\overset{\longleftrightarrow}{K_m}\cup \overset{\longleftrightarrow\quad\quad}{K_{n-m-k}})\cup E_1$,
where $E_1=\{(u,v) |u\in V(\overset{\longleftrightarrow}{K_m}), v\in V(\overset{\longleftrightarrow\quad\quad}{K_{n-m-k}})\}$
(see Fig.6). Clearly, $\overrightarrow{K}(n, k, m)\in \mathcal{D}_{n, k}$.

Let  $\overrightarrow{\mathcal{K}}(n, k)=\{\overrightarrow{K}(n, k, m)|1\leq m\leq n-k-1\}$ where $1\leq k\leq n-2$.
Clearly, $\overrightarrow{\mathcal{K}}(n, k)\subseteq\mathcal{D}_{n, k}$.

\begin{prop}\label{prop61}{\rm(\cite{1976})}
Let $D$ be a  strongly connected digraph with $\kappa(D)=k$. Suppose that $S$ is a $k$--vertex cut of $D$ and $D_1$, $D_2$, \ldots, $D_t$ are the strongly connected components of $D-S$. Then there exists an ordering of $D_1$, $D_2$, \ldots, $D_t$ such that for $1\leq i\leq t$ and $v\in V(D_i)$, every tail of $v$ is in $\bigcup\limits^{i-1}_{j=1} D_j$.
\end{prop}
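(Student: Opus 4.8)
The plan is to pass to the condensation of $D-S$ and topologically sort it. Form the auxiliary digraph $D^\ast$ whose vertices are the strongly connected components $D_1,\ldots,D_t$ of $D-S$, and place an arc $D_a\to D_b$ (for $a\neq b$) whenever $D$ contains some arc $(u,v)$ with $u\in V(D_a)$ and $v\in V(D_b)$. The ordering asserted in the statement is exactly a topological ordering of $D^\ast$, so the proposition reduces to the two classical facts that $D^\ast$ is acyclic and that a finite acyclic digraph admits a topological ordering.

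First I would show that $D^\ast$ has no directed cycle. Suppose to the contrary that $D^\ast$ contains a directed cycle $D_{i_1}\to D_{i_2}\to\cdots\to D_{i_r}\to D_{i_1}$ through pairwise distinct components with $r\geq 2$. Each $D_{i_p}$ is strongly connected, and the chosen inter-component arcs let one travel from any vertex of $D_{i_p}$ into $D_{i_{p+1}}$: from an arbitrary vertex of $D_{i_p}$ one reaches the tail of the selected arc (using internal strong connectivity), then crosses into $D_{i_{p+1}}$. Chaining these passages around the cycle shows that any two vertices of $V(D_{i_1})\cup\cdots\cup V(D_{i_r})$ are mutually reachable inside $D-S$, so this union lies in a single strongly connected component of $D-S$. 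This contradicts the maximality of the $D_{i_p}$ as distinct strongly connected components, so $D^\ast$ is acyclic.

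Next I would produce the ordering by the standard topological-sort argument, by induction on $t$. A nonempty finite acyclic digraph has a source, i.e.\ a component with no entering arc in $D^\ast$: otherwise, following entering arcs backwards indefinitely would, by finiteness, revisit a component and exhibit a directed cycle, contradicting acyclicity. Relabel such a source as $D_1$, delete it from $D^\ast$ (which remains acyclic), and order the remaining $t-1$ components by the inductive hypothesis to obtain $D_2,\ldots,D_t$. With this labelling every arc of $D^\ast$ runs from a lower to a higher index. Translating back to $D$, if $v\in V(D_i)$ and $u$ is a tail of $v$ lying in a component $D_j$ with $j\neq i$, then the arc $(u,v)$ induces $D_j\to D_i$ in $D^\ast$, forcing $j<i$; hence every such tail lies in $\bigcup_{j=1}^{i-1}V(D_j)$, as claimed.

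The only real obstacle is the acyclicity of the condensation: it rests on the maximality of the strongly connected components, and one must be careful that the inter-component arcs can genuinely be concatenated into closed walks through all of $D_{i_1},\ldots,D_{i_r}$ by exploiting the internal strong connectivity of each component. Once acyclicity is in hand, the topological ordering and the reinterpretation of arcs as tails are routine. One should also keep in mind that the ordering constrains only those tails of $v$ that lie in the components $D_1,\ldots,D_t$; tails belonging to the cut $S$ or to the same component $D_i$ are irrelevant to the ordering and remain unconstrained.
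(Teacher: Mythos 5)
The paper gives no proof of this proposition at all: it is quoted as a known fact from \cite{1976} (Bondy--Murty), so there is no argument of the authors' own to compare yours against. Your proof is the standard one and it is correct: the condensation $D^\ast$ of $D-S$ is acyclic (a directed cycle through distinct components would, via the internal strong connectivity of each component, merge them into one strongly connected subdigraph of $D-S$, contradicting maximality), a finite acyclic digraph admits a topological ordering by the source-extraction induction you give, and under that ordering any inter-component arc $(u,v)$ with $u\in V(D_j)$, $v\in V(D_i)$ forces $j<i$. Your closing caveat is also the right reading of the (literally too strong) statement: the ordering can only constrain tails lying in $D_1,\ldots,D_t$, not those in $S$ or in $D_i$ itself, and this is exactly the interpretation the authors use in Remark \ref{rem62}, where $D_1$ is taken to be a component receiving no arcs from the other components of $D-S$.
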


\begin{rem}\label{rem62}
By Proposition \ref{prop61}, we know that $D_1$ with $|V(D_1)|=m$ is the strongly connected component of $D-S$ where the inneighbors of vertices of $V(D_1)$ in $D-S$ are zero. Let $D_2=D-S-D_1$. We add arcs to $D$ until both induced subdigraph of $V(D_1)\cup S$ and induced subdigraph of $V(D_2)\cup S$ attain to complete digraphs, add arc $(u,v)$ for any $u\in V(D_1)$ and any $v\in V(D_2)$. Denote the new digraph by $H$. Since $D$ is $k$--strongly connected, then $H=\overrightarrow{K}(n,k,m)\in \overrightarrow{\mathcal{K}}(n, k)\subseteq \mathcal{D}_{n, k}$.
Since $D$ is the subdigraph of $H$, then $q(D)\leq q(H)$, with equality if and only if $D\cong H$ by Corollary \ref{cor25}. Therefore, the digraph which achieves the maximum signless Laplacian spectral radius in $\mathcal{D}_{n, k}$ must be some digraph in $\overrightarrow{\mathcal{K}}(n, k)$.
\end{rem}

\begin{theo}\label{theo63}
Let $n,k,m$ be positive integers with $1\leq k\leq n-2$ and $1\leq m\leq n-k-1$. 
Then $q(\overrightarrow{K}(n, k, m))=\frac{3n-m-4+\sqrt{(n-3m)^2+8mk}}{2}$.
\end{theo}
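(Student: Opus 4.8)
The plan is to exploit the strong symmetry of $\overrightarrow{K}(n,k,m)$ through an equitable partition, reducing the problem to a $3\times 3$ quotient matrix. Partition the vertex set into the three classes $A=V(\overset{\longleftrightarrow}{K_k})$, $B=V(\overset{\longleftrightarrow}{K_m})$ and $C=V(\overset{\longleftrightarrow\quad\quad}{K_{n-m-k}})$. First I would read off the out-neighbourhood structure from the definition of $\bigtriangledown$ and $E_1$: each vertex of $A$ and each vertex of $B$ has out-arcs into all three classes, giving outdegree $n-1$, whereas a vertex of $C$ sends arcs only inside $C$ and to $A$ (since $E_1$ is oriented from $B$ to $C$ only), giving the smaller outdegree $n-m-1$. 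The key structural fact is that any two vertices lying in the same class are interchanged by an automorphism of the digraph.

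Next I would use uniqueness and positivity of the Perron eigenvector. Since $\overrightarrow{K}(n,k,m)$ is strongly connected, $Q(\overrightarrow{K}(n,k,m))$ is nonnegative and irreducible, so the positive unit eigenvector $x$ for $q:=q(\overrightarrow{K}(n,k,m))$ is unique; applying the automorphisms above, $x$ must be constant on each class, say $x\equiv x_A,x_B,x_C$ on $A,B,C$. Writing $(Q(D)x)_v=qx_v$ for one representative of each class converts the eigen-equation into the system governed by the quotient matrix
$$
M=\begin{pmatrix} n+k-2 & m & n-m-k\\ k & n+m-2 & n-m-k\\ k & 0 & 2n-2m-k-2\end{pmatrix},
$$
whose diagonal entries combine the outdegree of a class with its number of in-class out-neighbours. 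Because $(x_A,x_B,x_C)^{T}>0$ is an eigenvector of the nonnegative irreducible matrix $M$ for the eigenvalue $q$, Perron--Frobenius identifies $q$ with the spectral radius, i.e.\ the largest eigenvalue, of $M$.

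It then remains to compute the eigenvalues of $M$. Expanding $\det(xI-M)$ yields a cubic, and I would verify by direct substitution that $x=n-2$ is a root. Dividing out the factor $x-(n-2)$ leaves a quadratic $x^{2}-(3n-m-4)x+t$ whose discriminant simplifies to $(n-3m)^{2}+8mk$, so its roots are $\frac{3n-m-4\pm\sqrt{(n-3m)^{2}+8mk}}{2}$. To confirm that the larger of these (rather than $n-2$) is $q$, I would observe that $\frac{3n-m-4+\sqrt{(n-3m)^{2}+8mk}}{2}\ge n-2$ is equivalent to $\sqrt{(n-3m)^{2}+8mk}\ge m-n$, which holds trivially since $m\le n-k-1<n$; hence the spectral radius of $M$ equals the claimed value.

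The main obstacle is the algebraic bookkeeping rather than anything conceptual: one must set up $M$ with the correct asymmetric diagonal (class $C$ carries the smaller outdegree $n-m-1$, which is easy to misrecord), then carry out the cubic expansion and confirm both the rational root $x=n-2$ and the clean discriminant $(n-3m)^{2}+8mk$. The remaining ingredients—the equitable partition, the symmetry forcing the Perron vector to be constant on classes, and the appeal to Perron--Frobenius—are routine given Lemma \ref{lem21} and the irreducibility of $Q(\overrightarrow{K}(n,k,m))$ already established in the excerpt.
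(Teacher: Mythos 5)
Your proposal is correct, but it takes a genuinely different route from the paper, so a comparison is worthwhile. The paper writes $Q(\overrightarrow{K}(n,k,m))$ in block form and computes its full $n\times n$ characteristic polynomial, $(x-n+2)^{m+k-1}(x-n+m+2)^{t-1}\left[x^2-(3n-m-4)x+2(n-m-1)(n+m-2)-2mk\right]$ with $t=n-m-k$, and must then work to decide which root equals $q(D)$: it introduces an auxiliary block-diagonal matrix $Q(D')\leq Q(D)$ to obtain the lower bound $q(D)>\max\{n+m+k-2,\,2n-2m-k-2\}$, which rules out the roots of the linear factors, and then runs a case analysis ($n\geq 3m$ versus $n<3m$) to exclude the smaller quadratic root. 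You instead pass to the $3\times 3$ equitable-quotient matrix $M$ — your class outdegrees ($n-1$ for $A$ and $B$, $n-m-1$ for $C$) and all entries of $M$ check out — and invoke the Perron--Frobenius fact that an eigenvalue of an irreducible nonnegative matrix admitting a positive eigenvector must be its spectral radius. This makes the exclusion of spurious roots automatic: the eigenvalues of $M$ are all real (indeed $x=n-2$ is a root, since the first two rows of $(n-2)I-M$ coincide, and the remaining quadratic has positive discriminant $(n-3m)^2+8mk$, matching the paper's quadratic), so $q=\rho(M)$ is simply the largest of the three, and your observation that $\sqrt{(n-3m)^2+8mk}\geq m-n$ holds trivially finishes the identification. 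What the paper's longer computation buys is the complete spectrum of $Q(D)$ with multiplicities; what your argument buys is brevity and the elimination of the paper's somewhat delicate case distinction. One small point to make explicit: the Perron--Frobenius statement you use (positive eigenvector $\Rightarrow$ eigenvalue equals Perron root) is not among the lemmas quoted in the paper — Lemma \ref{lem31} only yields the one-sided inequality $q\leq\rho(M)$ — but it follows immediately by pairing your positive eigenvector with a left Perron vector of $M$, or can be cited from standard references on nonnegative matrices.
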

\begin{proof}
Let $D=\overrightarrow{K}(n, k, m),$  $S$ be a $k$--vertex cut of $D$. Suppose that $D_1$ with $|V(D_1)|=m$ and $D_2$ with $|V(D_2)|=n-m-k=t$ are two strongly connected components, i.e. two complete subdigraphs of $D-S$ with arcs $E_1=\{(u, v) | u\in V(D_1), v\in V(D_2)\}$. Then

$Q(D)=\begin{bmatrix}
J_{m\times m}+(n-2)I_{m\times m} & J_{m \times k} & J_{m\times t}\\
J_{k\times m} & J_{k \times k}+(n-2)I_{k\times k} & J_{k\times t}\\
O_{t\times m} & J_{t\times k} & J_{t\times t}+(k+t-2)I_{t\times t}
\end{bmatrix}$
\vskip.2cm
\noindent and the signless Laplacian characteristic polynomial of $D$ is

$P_{D}(x)=|xI_{n\times n}-Q(D)|$

\hskip1.2cm$=\begin{vmatrix}
(x-n+2)I_{m\times m}-J_{m\times m} & -J_{m \times k} & -J_{m\times t}\\
-J_{k\times m} & (x-n+2)I_{k\times k}-J_{k \times k} & -J_{k\times t}\\
O_{t\times m} & -J_{t\times k} & (x-t-k+2)I_{t\times t}-J_{t\times t}
\end{vmatrix}$

\vskip.2cm
\hskip1.2cm$=(x-n+2)^{m+k-1}(x-k-t+2)^{t-1}(x-n-m+2)\left[x-2(k+t-1)-\frac{2mk}{x-n-m+2}\right]$
\vskip.2cm
\hskip1.2cm$=(x-n+2)^{m+k-1}(x-n+m+2)^{t-1}[x^2-(3n-m-4)x+2(n-m-1)(n+m-2)-2mk]$.

Let $D'$ be the proper subdigraph of $D$ and
\vskip.2cm
$Q(D')=\begin{bmatrix}
J_{(m+k)\times (m+k)}+(n-2)I_{(m+k)\times (m+k)} & O_{(m+k)\times t}\\
O_{t\times (m+k)} &  J_{t\times t}+(k+t-2)I_{t\times t}
\end{bmatrix}$,
\vskip.2cm
\noindent
then $q(D)>q(D')=\max\{n+m+k-2, 2n-2m-k-2\}$ by $\rho(aJ_{n\times n}+bI_{n\times n})=na+b$.

Note that $\max\{n+m+k-2, 2n-2m-k-2\}>\max\{n-2,n-m-2\}=n-2$, then $q(D)$ is equal to the solution of the the quadratic equation $x^2-(3n-m-4)x+2(n-m-1)(n+m-2)-2mk=0$, thus $q(D)=\frac{3n-m-4+\sqrt{(n-3m)^2+8mk}}{2}$ or $q(D)=\frac{3n-m-4-\sqrt{(n-3m)^2+8mk}}{2}$.


\vskip.2cm
If $q(D)=\frac{3n-m-4-\sqrt{(n-3m)^2+8mk}}{2}$, then

\hskip1cm $q(D')<q(D)<\frac{3n-m-4-|n-3m|}{2}=\left\{\begin{array}{ll}
n+m-2,& \mbox{if } n\geq 3m;\\
2n-2m-2, & \mbox{if } 0<n<3m.
\end{array}
\right.$


Now we will show there exists a condiction since $n+m+k-2\leq q(D')<q(D)$.
When $n\geq 3m$, there is a contradiction by $n+m+k-2<q(D)<n+m-2$; when $0<n<3m$, there is also a contradiction by $n+m+k-2<q(D)<2n-2m-2$.
Combining the above arguments, we have $q(D)=\frac{3n-m-4+\sqrt{(n-3m)^2+8mk}}{2}$.
\end{proof}

\begin{rem}\label{rem65}
Note that $\overset{\longleftrightarrow}{K_n}$ is the unique digraph which achieves the  maximum signless Laplacian spectral radius $2n-2$  among all strongly connected digraphs, and $\overrightarrow{K}(n, n-2, 1)\cong \overset{\longleftrightarrow}{K_n}-\{(u, v)\}$ where $u,v\in V(\overset{\longleftrightarrow}{K_n})$,
by Lemma \ref{lem23} and Theorem \ref{theo63}, we are sure that $\overrightarrow{K}(n, n-2, 1)$ is the unique digraph which achieves the second maximum signless Laplacian spectral radius $\frac{3n-5+\sqrt{n^2+2n-7}}{2}$  among all strongly connected digraphs.
\end{rem}

\begin{theo}\label{theo66}
Let $D$ be a strongly connected digraph, $D\not\cong \overset{\longleftrightarrow}{K_n},$ and $D\not\cong \overrightarrow{K}(n, n-2, 1)$.
Then $q(D)< \frac{3n-5+\sqrt{n^2+2n-7}}{2}$.
\end{theo}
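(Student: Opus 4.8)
The plan is to compare an arbitrary strongly connected $D$ with the complete digraph $\overset{\longleftrightarrow}{K_n}$ and to exploit the strict monotonicity of $q$ under proper subdigraphs (Corollary \ref{cor25}). The first thing I would record is that the target extremal digraph is simply the complete digraph with one arc deleted: as noted in Remark \ref{rem65}, $\overrightarrow{K}(n,n-2,1)\cong \overset{\longleftrightarrow}{K_n}-\{(u,v)\}$, and by Theorem \ref{theo63} with $k=n-2$ and $m=1$ its signless Laplacian spectral radius equals $\frac{3n-5+\sqrt{n^2+2n-7}}{2}$ (since $(n-3)^2+8(n-2)=n^2+2n-7$). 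So the whole statement reduces to: every strongly connected $D$ that is neither $\overset{\longleftrightarrow}{K_n}$ nor $\overset{\longleftrightarrow}{K_n}-\{(u,v)\}$ has $q(D)$ strictly below this value.

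Next I would count missing arcs. Since $D$ is simple, $E(D)\subseteq E(\overset{\longleftrightarrow}{K_n})$; let $r$ be the number of arcs of $\overset{\longleftrightarrow}{K_n}$ absent from $D$. The hypothesis $D\not\cong\overset{\longleftrightarrow}{K_n}$ gives $r\ge 1$. Because the automorphism group of $\overset{\longleftrightarrow}{K_n}$ acts transitively on arcs, all digraphs $\overset{\longleftrightarrow}{K_n}-\{(u,v)\}$ are isomorphic to one another and hence to $\overrightarrow{K}(n,n-2,1)$; thus the case $r=1$ would force $D\cong\overrightarrow{K}(n,n-2,1)$, which is excluded. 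Therefore $r\ge 2$.

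Now I would fix one missing arc $(u,v)$ of $D$. Since $(u,v)\notin E(D)$ we get $E(D)\subseteq E(\overset{\longleftrightarrow}{K_n})\setminus\{(u,v)\}=E(\overset{\longleftrightarrow}{K_n}-\{(u,v)\})$, so $D$ is a subdigraph of $\overset{\longleftrightarrow}{K_n}-\{(u,v)\}$; and since $D$ omits at least two arcs while $\overset{\longleftrightarrow}{K_n}-\{(u,v)\}$ omits exactly one, the containment is proper. Applying the strict part of Corollary \ref{cor25} (with $\overset{\longleftrightarrow}{K_n}-\{(u,v)\}$ playing the role of the ambient strongly connected digraph and $D$ the proper subdigraph) then yields $q(D)<q(\overset{\longleftrightarrow}{K_n}-\{(u,v)\})=\frac{3n-5+\sqrt{n^2+2n-7}}{2}$, the desired bound.

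The one point that must be checked before invoking the strict inequality is that $\overset{\longleftrightarrow}{K_n}-\{(u,v)\}$ is itself strongly connected, since Corollary \ref{cor25} upgrades $\le$ to $<$ only when the ambient digraph is strongly connected. This is the mild obstacle, and it is elementary: for $n\ge 3$ every ordered pair of vertices other than $(u,v)$ is still joined directly, and the route $u\to w\to v$ through any third vertex $w$ restores the missing connection, so strong connectivity survives the single deletion. With that verification in place the argument is complete.
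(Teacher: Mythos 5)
Your proof is correct and takes essentially the same route as the paper: the paper justifies this theorem via Remark \ref{rem65}, namely by regarding any such $D$ as a proper subdigraph of $\overset{\longleftrightarrow}{K_n}-\{(u,v)\}\cong\overrightarrow{K}(n,n-2,1)$, applying the strict monotonicity of Lemma \ref{lem23} (equivalently Corollary \ref{cor25}), and plugging in the value $\frac{3n-5+\sqrt{n^2+2n-7}}{2}$ from Theorem \ref{theo63}. Your write-up simply makes explicit the arc-counting, arc-transitivity, and strong-connectivity verifications that the paper leaves implicit.
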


Since $\mathcal{D}_{n, n-1}=\{\overset{\longleftrightarrow}{K_n}\}$, we know $\overrightarrow{K}(n, n-2, 1)$
is the unique digraph which achieves the maximum signless Laplacian spectral radius in $\mathcal{D}_{n, n-2}$ by Remark \ref{rem65} or Theorem \ref{theo66}.
Thus we can proposed  the following conjecture and we have seen that when $k=n-2$, the conjecture is true.

\begin{con}\label{con67}
Let $n,k$ be given positive integers with $1\leq k\leq n-2$,  $D\in\mathcal{D}_{n, k}$.
Then $q(D)\leq \frac{3n-5+\sqrt{(n-3)^2+8k}}{2}$ with equality if and only if  $G\cong \overrightarrow{K}(n, k, 1)$.
\end{con}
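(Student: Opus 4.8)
The plan is to recognize the threshold $\frac{3n-5+\sqrt{n^2+2n-7}}{2}$ as the signless Laplacian spectral radius of $\overrightarrow{K}(n,n-2,1)$, to observe that this digraph is precisely $\overset{\longleftrightarrow}{K_n}$ with a single arc deleted, and then to exhibit every admissible $D$ as a \emph{proper} subdigraph of a copy of it, so that the strict subdigraph monotonicity in Corollary~\ref{cor25} delivers the strict inequality at once.

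First I would record, by substituting $k=n-2$ and $m=1$ into Theorem~\ref{theo63}, that $q(\overrightarrow{K}(n,n-2,1))=\frac{3n-5+\sqrt{n^2+2n-7}}{2}$; here the discriminant simplifies because $(n-3)^2+8(n-2)=n^2+2n-7$. Next I would make precise the structural remark already flagged in Remark~\ref{rem65}. Unwinding the definition of $\overrightarrow{K}(n,k,m)$ at $k=n-2$, $m=1$ gives $n-m-k=1$, so both outer blocks $\overset{\longleftrightarrow}{K_1}$ are single vertices, say $a$ and $b$; each is joined to the central clique $\overset{\longleftrightarrow}{K_{n-2}}$ in both directions by the join operation, while $E_1=\{(a,b)\}$ supplies the only arc between $a$ and $b$. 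Thus $\overrightarrow{K}(n,n-2,1)=\overset{\longleftrightarrow}{K_n}-\{(b,a)\}$. In particular, for $n\ge 3$ this digraph is strongly connected (it lies in $\mathcal{D}_{n,n-2}$), and, by the vertex-transitivity of $\overset{\longleftrightarrow}{K_n}$, every single-arc deletion $\overset{\longleftrightarrow}{K_n}-\{(u,v)\}$ is isomorphic to it.

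The heart of the proof is then a short counting argument. Since $D$ is a simple digraph on $n$ vertices, its arc set is contained in that of $\overset{\longleftrightarrow}{K_n}$; let $r$ be the number of arcs of $\overset{\longleftrightarrow}{K_n}$ missing from $D$. Because $D\not\cong\overset{\longleftrightarrow}{K_n}$ we have $r\ge 1$, and because a simple digraph on $n$ vertices missing exactly one arc of $\overset{\longleftrightarrow}{K_n}$ is isomorphic to $\overrightarrow{K}(n,n-2,1)$ while $D\not\cong\overrightarrow{K}(n,n-2,1)$, we must in fact have $r\ge 2$. Fixing one missing arc $(u,v)$, the digraph $D$ is a subdigraph of $\overset{\longleftrightarrow}{K_n}-\{(u,v)\}\cong\overrightarrow{K}(n,n-2,1)$, and it is a \emph{proper} one, since a second arc of $\overset{\longleftrightarrow}{K_n}-\{(u,v)\}$ is still absent from $D$. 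As $\overrightarrow{K}(n,n-2,1)$ is strongly connected, Corollary~\ref{cor25} gives $q(D)<q(\overrightarrow{K}(n,n-2,1))=\frac{3n-5+\sqrt{n^2+2n-7}}{2}$, which is exactly the claim.

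I expect no serious obstacle: the whole argument reduces to the strict monotonicity of the signless Laplacian spectral radius under passing to a proper subdigraph of a strongly connected digraph, already available as Corollary~\ref{cor25}. The only point needing care is the equivalence ``$D$ misses exactly one arc of $\overset{\longleftrightarrow}{K_n}$ $\iff D\cong\overrightarrow{K}(n,n-2,1)$,'' which rests on the explicit identification $\overrightarrow{K}(n,n-2,1)=\overset{\longleftrightarrow}{K_n}-\{(b,a)\}$ from the second step together with the fact that $\overset{\longleftrightarrow}{K_n}$ is vertex-transitive, so the isomorphism type of a single-arc deletion is independent of which arc is removed. I would also state the standing hypothesis $n\ge 3$, below which $\overrightarrow{K}(n,n-2,1)$ is undefined and the statement is vacuous.
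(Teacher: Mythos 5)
Your proposal does not prove the statement in question. The statement is Conjecture~\ref{con67}, which the paper explicitly leaves open (it writes ``we have seen that when $k=n-2$, the conjecture is true''): for each fixed $k$ with $1\le k\le n-2$ it asserts the $k$-dependent bound $q(D)\le \frac{3n-5+\sqrt{(n-3)^2+8k}}{2}$ on the class $\mathcal{D}_{n,k}$, with equality exactly for $\overrightarrow{K}(n,k,1)$. You silently replaced this bound by its value at $k=n-2$, namely $\frac{3n-5+\sqrt{n^2+2n-7}}{2}$ (indeed $(n-3)^2+8(n-2)=n^2+2n-7$). What your argument establishes --- every strongly connected $D$ other than $\overset{\longleftrightarrow}{K_n}$ and $\overrightarrow{K}(n,n-2,1)$ is a proper subdigraph of some $\overset{\longleftrightarrow}{K_n}-\{(u,v)\}\cong \overrightarrow{K}(n,n-2,1)$, so Corollary~\ref{cor25} gives strict inequality --- is precisely Theorem~\ref{theo66} (equivalently Remark~\ref{rem65}), i.e.\ only the case $k=n-2$, which the paper already has. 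For $k<n-2$ the conjectured bound is strictly smaller, and your embedding into $\overset{\longleftrightarrow}{K_n}$ minus one arc discards all information about the vertex connectivity, so this route can never produce a bound below the $k=n-2$ threshold. A genuine attack would have to use Remark~\ref{rem62} to reduce to the family $\overrightarrow{K}(n,k,m)$ and then show that $q(\overrightarrow{K}(n,k,m))=\frac{3n-m-4+\sqrt{(n-3m)^2+8mk}}{2}$ (Theorem~\ref{theo63}) is maximized over $1\le m\le n-k-1$ at $m=1$; your proposal never performs this comparison, nor does it touch the equality characterization for general $k$.

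Moreover, that missing comparison is exactly where the statement breaks down, so no proof along any lines could succeed: the conjecture appears to be false for $k<n-2$. Take $n=4$, $k=1$, $m=2$: Theorem~\ref{theo63} gives $q(\overrightarrow{K}(4,1,2))=\frac{6+\sqrt{4+16}}{2}=3+\sqrt{5}\approx 5.236$ (one can verify this directly from the eigenvector equations), while the conjectured maximum over $\mathcal{D}_{4,1}$ is $\frac{7+\sqrt{9}}{2}=5$; and $\overrightarrow{K}(4,1,2)$ is strongly connected with $\kappa=1$, since deleting the single join vertex leaves the one-vertex block with no out-arc. More generally $m=1$ does not maximize $q(\overrightarrow{K}(n,k,m))$: for $n=10$, $k=1$ one gets about $17.06$ at $m=8$ versus about $16.27$ at $m=1$. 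So you have correctly reproved Theorem~\ref{theo66} by essentially the paper's own argument, but the conjecture itself --- the statement you were assigned --- remains unproved by your proposal, and is in fact refutable as stated.
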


\section{Some notes on the spectral radius of strongly connected digraphs}

\hskip.6cm
In Section 3 $\sim$ Section 6, we use some similar technique which applied in \cite{2012DM}.
Although there are some defects in Section 2 $\sim$ Section 3 in  \cite{2012DM} which can be revised by similar proofs of this paper,
the results are well done and  there are some useful techniques which can be used to study  strongly connected digraphs.
Now in this section, we only show that there are more results can be obtained on  the spectral radius of strongly connected digraphs.

Note that $\overrightarrow{C_n}$ is the unique digraph with the minimum signless Laplacian spectral radius among all strongly connected digraphs on $n$ vertices. In \cite{2012LAA}, the authors  characterized the extremal digraphs which achieve the maximum and minimum spectral radius among all strongly connected bicyclic digraphs, and they proposed the following open problem.

\begin{prob}\label{prob71}
Is the digraph $\theta (0, 1, n-3)$ achieving the second minimum spectral radius among all strongly connected digraphs?
\end{prob}

We confirm Problem \ref{prob71} 
and we can obtain more by the similar proofs of Theorems \ref{theo53}$\sim$\ref{theo54}.

\begin{theo}\label{theo72}
Let $n\geq 4$. Then  $\rho(\overrightarrow{C_n})<\rho(C_{n,n-1})<\rho(C_{n,n-2})<\cdots<\rho(C_{n,2})$.
\end{theo}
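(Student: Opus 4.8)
The plan is to transcribe the proof of Theorem~\ref{theo53} essentially verbatim, replacing the signless Laplacian matrix $Q(D)$ by the adjacency matrix $A(D)$ throughout and replacing the baseline value $q(\overrightarrow{C_n})=2$ by $\rho(\overrightarrow{C_n})=1$. The key observation is that the supporting tools used there all have immediate adjacency analogues with the same proofs: Lemmas~\ref{lem24} and~\ref{lem31} are already stated for arbitrary nonnegative (irreducible) matrices and so apply to $A(D)$ without change; the transformation Theorem~\ref{theo32} carries over with $A$ in place of $Q$ because its proof uses only that $(A(H)x)_s=(A(D)x)_s$ for $s\neq u$ and $(A(H)x)_u-(A(D)x)_u=x_w-x_v$; and the path-monotonicity Lemma~\ref{lem35} survives because for an interior vertex $u_i$ of outdegree $1$ we have $\rho(D)x_i=(A(D)x)_i=x_{i+1}$, and $\rho(D)>\rho(\overrightarrow{C_g})=1$ forces $x_{i+1}>x_i$.

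First I would record the adjacency version of the eigenvector ordering in Lemma~\ref{lem51}: if $x$ is the Perron eigenvector of $A(C_{n,g})$, then applying the monotonicity lemma along the two directed paths $u_{g+1}u_{g+2}\cdots u_n u_1\cdots u_g$ and $u_g u_{g+1}u_{g+2}$ yields
$$x_{g+1}<x_{g+2}<\cdots<x_n<x_1<x_2<\cdots<x_g,$$
which is exactly the computation of Lemma~\ref{lem51} with $\rho$ in place of $q$. For the leftmost inequality of the chain, $C_{n,n-1}$ contains $\overrightarrow{C_{n-1}}$ as a proper subdigraph and is strongly connected, so $A(\overrightarrow{C_{n-1}})$ is a proper principal submatrix of the irreducible matrix $A(C_{n,n-1})$; by Lemma~\ref{lem24} this gives $\rho(C_{n,n-1})>\rho(\overrightarrow{C_{n-1}})=1=\rho(\overrightarrow{C_n})$.

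Next I would establish the generic step $\rho(C_{n,g+1})<\rho(C_{n,g})$ for $2\le g\le n-2$. Let $x$ be the Perron eigenvector of $A(C_{n,g+1})$; the ordering above (applied with $g+1$) gives in particular $x_1<x_2$. Since $C_{n,g}\cong C_{n,g+1}-\{(u_{g+1},u_1)\}+\{(u_{g+1},u_2)\}$ and the resulting digraph is strongly connected, the adjacency version of Theorem~\ref{theo32} with $x_2>x_1$ yields $\rho(C_{n,g})>\rho(C_{n,g+1})$. Chaining this inequality for $g=n-2,n-3,\ldots,2$ and prepending the base inequality $\rho(\overrightarrow{C_n})<\rho(C_{n,n-1})$ gives the asserted strictly increasing chain.

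The main obstacle is not any single sharp estimate but the bookkeeping needed to confirm that every lemma invoked in the $q$-proof genuinely transfers to $\rho$. In particular one must verify the isomorphism $C_{n,g}\cong C_{n,g+1}-\{(u_{g+1},u_1)\}+\{(u_{g+1},u_2)\}$, which is a relabeling of the cycle, and check that the \emph{strict} form of the transformation lemma applies, which in turn requires $\rho(C_{n,g+1})>1$; the latter holds because $C_{n,g+1}$ properly contains a directed cycle and is strongly connected. Note that no analogue of the upper bound ``$<3$'' from Theorem~\ref{theo53} is needed here, since the statement asks only for the strictly decreasing chain of the $\rho(C_{n,g})$.
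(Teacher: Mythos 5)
Your proposal is correct and is exactly the route the paper intends: the paper gives no separate argument for Theorem~\ref{theo72}, stating only that it follows ``by the similar proofs of Theorems \ref{theo53}$\sim$\ref{theo54},'' and your write-up is precisely that adaptation, with the adjacency analogues of Lemma~\ref{lem35}, Theorem~\ref{theo32} and Lemma~\ref{lem51} verified and the baseline $\rho(\overrightarrow{C_n})=1$ handled via Lemma~\ref{lem24}. You also correctly observe that the outdegree-based upper bound (the ``$<3$'' step, which relies on the $Q$-specific Theorem~\ref{theo27}) is not needed here.
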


 \begin{theo}\label{theo73}
Let  $n\geq 4$. Then $\theta(0,1,n-3)$, $\theta(1,1,n-4)$, $\theta(0,2,n-4)$ are the digraphs which achieve the second,
the third and the fourth minimum  spectral radius among all strongly connected digraphs on $n$ vertices, respectively.
\end{theo}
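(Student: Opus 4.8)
The plan is to reproduce the proof of Theorem~\ref{theo54} almost verbatim, replacing the signless Laplacian matrix $Q(D)$ by the adjacency matrix $A(D)$ everywhere. First I would transfer the machinery. The transformation results of Section~3 --- Lemma~\ref{lem35}, Theorem~\ref{theo36}, Corollary~\ref{cor37} and Theorem~\ref{theo32} --- rely only on the Perron--Frobenius facts of Lemmas~\ref{lem21},~\ref{lem23},~\ref{lem24},~\ref{lem31} and Corollary~\ref{cor25}, all of which are statements about an arbitrary nonnegative irreducible matrix and hence apply equally to $A(D)$; so each has a word-for-word $\rho$-analogue (with the value $q(\overrightarrow{C_g})=2$ replaced by $\rho(\overrightarrow{C_g})=1$). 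In particular the $\rho$-version of Theorem~\ref{theo52} holds: for $2\le g\le n-1$, $C_{n,g}$ is the unique minimizer of $\rho$ in $\mathcal{G}_{n,g}$. Combining this with Theorem~\ref{theo72} and with the observation that any strongly connected $D\not\cong\overrightarrow{C_n}$ properly contains a directed cycle, whence $\rho(D)>\rho(\overrightarrow{C_g})=1$ by the analogue of Corollary~\ref{cor25}, already identifies $\overrightarrow{C_n}$ as the unique first minimizer and $C_{n,n-1}=\theta(0,1,n-3)$ as the unique second minimizer.

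For the third and fourth minima I would argue exactly as in Theorem~\ref{theo54}. A strongly connected digraph of girth $g\le n-2$ satisfies $\rho(D)\ge\rho(C_{n,g})\ge\rho(C_{n,n-2})=\rho(\theta(0,2,n-4))$, with equality only for $D\cong\theta(0,2,n-4)$, by the $\rho$-analogue of Theorem~\ref{theo52} and by Theorem~\ref{theo72}. Since $\mathcal{G}_{n,n-1}=\{\theta(0,1,n-3),\theta(1,1,n-4),\widehat{\theta}\}$ and the girth-$n$ case yields only $\overrightarrow{C_n}$, the sole remaining candidates for the two smallest spectral radii beyond $\overrightarrow{C_n}$ and $\theta(0,1,n-3)$ are $\theta(1,1,n-4)$, $\widehat{\theta}$ and $\theta(0,2,n-4)$. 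Hence it suffices to establish
$$\rho(\theta(1,1,n-4))<\rho(\theta(0,2,n-4))<\rho(\widehat{\theta}).$$

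The crux is the three adjacency characteristic polynomials. Writing $\phi_D(x)=\det\!\bigl(xI-A(D)\bigr)$, I would expand the same determinants used in Theorem~\ref{theo54}, but now with zeros on the diagonal (equivalently, by enumerating the directed linear subdigraphs: $\theta(1,1,n-4)$ has exactly two $(n-1)$-cycles, $\theta(0,2,n-4)$ has one $(n-2)$-cycle and one $n$-cycle, and $\widehat{\theta}$ has two $(n-1)$-cycles and one $n$-cycle), obtaining
$$\phi_{\theta(1,1,n-4)}(x)=x^{n}-2x,\qquad \phi_{\theta(0,2,n-4)}(x)=x^{n}-x^{2}-1,\qquad \phi_{\widehat{\theta}}(x)=x^{n}-2x-1.$$
These give the clean differences $\phi_{\theta(1,1,n-4)}-\phi_{\theta(0,2,n-4)}=(x-1)^{2}$ and $\phi_{\theta(0,2,n-4)}-\phi_{\widehat{\theta}}=x(2-x)$. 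Because $A(D)$ is nonnegative irreducible, $\rho(D)$ is the largest real root of $\phi_D$ and $\phi_D$ changes sign there; I would then evaluate $\phi_{\theta(1,1,n-4)}$ at $\rho(\theta(0,2,n-4))$ and $\phi_{\theta(0,2,n-4)}$ at $\rho(\widehat{\theta})$, reading off the two desired inequalities from the positivity of $(x-1)^{2}$ and $x(2-x)$ at those points.

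The main obstacle --- and the only genuine difference from Theorem~\ref{theo54} --- is the range in which these sign comparisons are valid. In the signless-Laplacian case every relevant value exceeded $2$ and the differences stayed positive for all $x>2$, whereas here $x(2-x)$ is positive only on $(0,2)$. So before concluding I must localize all three spectral radii in $(1,2)$: from $\phi_{\widehat{\theta}}(1)=-2<0$ and $\phi_{\widehat{\theta}}(2)=2^{n}-5>0$ (for $n\ge4$) I get $1<\rho(\widehat{\theta})<2$, the same bracketing works for the other two, and a short sign analysis confirms that each $\phi_D$ has a single sign change on $(1,2)$, namely at its Perron root. With $\rho(\theta(0,2,n-4)),\rho(\widehat{\theta})\in(1,2)$ secured, the positivity of $(x-1)^{2}$ and $x(2-x)$ at the respective roots forces $\rho(\theta(1,1,n-4))<\rho(\theta(0,2,n-4))<\rho(\widehat{\theta})$, which together with the reductions above pins down $\theta(1,1,n-4)$ and $\theta(0,2,n-4)$ as the unique third and fourth minimizers.
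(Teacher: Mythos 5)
Your proposal is correct and is essentially the paper's own argument: the paper gives no separate proof of this theorem, justifying it only by the remark that it follows "by the similar proofs of Theorems \ref{theo53}$\sim$\ref{theo54}" with $A(D)$ in place of $Q(D)$, and your write-up is precisely that adaptation, with the characteristic polynomials $x^n-2x$, $x^n-x^2-1$, $x^n-2x-1$ computed correctly and the one genuine subtlety (the comparison $x(2-x)>0$ only holds on $(1,2)$, so the Perron roots must first be bracketed there) properly identified and handled.
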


Note that $\overset{\longleftrightarrow}{K_n}$ is the unique digraph which achieves the  maximum  spectral radius $n-1$  among all strongly connected digraphs,
we have the following result by  Theorem 4.2 in \cite{2012DM} and Lemma \ref{lem23}.
\begin{theo}\label{theo74}
Let  $n\geq 4$. Then $\overrightarrow{K}(n, n-2, 1)$ is the unique digraph which achieves the second maximum  spectral radius $\frac{n-2+\sqrt{n^2-4}}{2}$  among all strongly connected digraphs.
\end{theo}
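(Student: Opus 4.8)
The plan is to transcribe the signless-Laplacian argument of Remark~\ref{rem65} and Theorem~\ref{theo66} into the adjacency setting, replacing $q(\cdot)$ by $\rho(\cdot)$ and $Q(D)$ by $A(D)$ throughout. The monotonicity lemmas of Section~2 carry over verbatim, since they are stated for arbitrary nonnegative (irreducible) matrices. The guiding observation is that $\overset{\longleftrightarrow}{K_n}$ is the unique digraph attaining the maximum spectral radius $n-1$, and that every simple strongly connected digraph on $n$ vertices is a subdigraph of $\overset{\longleftrightarrow}{K_n}$ (all of its arcs are among the $n(n-1)$ arcs of $\overset{\longleftrightarrow}{K_n}$). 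Hence any strongly connected $D\not\cong\overset{\longleftrightarrow}{K_n}$ is a \emph{proper} subdigraph of $\overset{\longleftrightarrow}{K_n}$ and is therefore missing at least one arc.

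First I would fix, for such a $D$, an arc $(u,v)\in E(\overset{\longleftrightarrow}{K_n})\setminus E(D)$ and observe that then $A(D)\le A(\overset{\longleftrightarrow}{K_n}-\{(u,v)\})$ entrywise, because $A(D)$ vanishes in position $(u,v)$ and is bounded above by $1$ in every off-diagonal position. The next step is to identify the comparison target: since $\overset{\longleftrightarrow}{K_n}$ is arc-transitive, the digraph $\overset{\longleftrightarrow}{K_n}-\{(u,v)\}$ is, up to isomorphism, independent of the deleted arc, and one checks directly that $\overset{\longleftrightarrow}{K_n}-\{(u,v)\}\cong\overrightarrow{K}(n,n-2,1)$ (take $S=V\setminus\{u,v\}$ as the $(n-2)$-vertex cut, $\{u\}$ and $\{v\}$ as the two strongly connected components of $D-S$, with the single surviving arc $(u,v)$ between them). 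This digraph is strongly connected, so $A(\overrightarrow{K}(n,n-2,1))$ is irreducible.

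Now apply Lemma~\ref{lem23}: from $0\le A(D)\le A(\overrightarrow{K}(n,n-2,1))$ with the larger matrix irreducible, we obtain $\rho(D)\le\rho(\overrightarrow{K}(n,n-2,1))$, and the strict part of Lemma~\ref{lem23} gives strict inequality whenever $A(D)<A(\overrightarrow{K}(n,n-2,1))$, that is, whenever $D\not\cong\overrightarrow{K}(n,n-2,1)$. This simultaneously yields the bound and the uniqueness of the extremal digraph. It then remains only to pin down the numerical value, which is supplied by Theorem~4.2 of \cite{2012DM}; alternatively, using the equitable partition $\{u\},\{v\},S$ one finds that the quotient characteristic polynomial factors as $(x+1)\bigl(x^{2}-(n-2)x-(n-2)\bigr)$, whose largest root is $\frac{n-2+\sqrt{n^{2}-4}}{2}$ (note $(n-2)^2+4(n-2)=n^2-4$).

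I do not anticipate a serious obstacle, as the argument is essentially a transcription of Remark~\ref{rem65} and Theorem~\ref{theo66} to the adjacency matrix. The one point that requires care is the reduction step: showing that comparing an arbitrary non-complete strongly connected $D$ against the \emph{single} well-defined digraph $\overrightarrow{K}(n,n-2,1)$ is legitimate. This rests on the isomorphism $\overset{\longleftrightarrow}{K_n}-\{(u,v)\}\cong\overrightarrow{K}(n,n-2,1)$ being independent of the chosen arc, and on the irreducibility of the target, which is exactly what is needed to upgrade the weak inequality to a strict one and hence to conclude uniqueness. The numerical identity itself is external and is taken from \cite{2012DM}.
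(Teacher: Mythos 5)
Your proposal is correct and takes essentially the same route as the paper: the paper's one-line justification likewise combines Lemma \ref{lem23} with the observation that any strongly connected $D\not\cong\overset{\longleftrightarrow}{K_n}$ sits inside $\overset{\longleftrightarrow}{K_n}-\{(u,v)\}\cong\overrightarrow{K}(n,n-2,1)$, taking the numerical value from Theorem 4.2 of \cite{2012DM}. Your equitable-partition calculation giving $(x+1)\bigl(x^2-(n-2)x-(n-2)\bigr)$ and hence $\frac{n-2+\sqrt{n^2-4}}{2}$ is a correct, self-contained substitute for that citation.
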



\end{document}